\theoremstyle{plain}
\newtheorem{theorem}{Theorem}[section]
\newtheorem{lemma}[theorem]{Lemma}
\newtheorem{corollary}[theorem]{Corollary}
\newtheorem{prop}[theorem]{Proposition}
\theoremstyle{definition}
\newtheorem{remark}[theorem]{Remark}
    \newtheoremstyle{TheoremNum}
        {\topsep}{\topsep}              
        {\itshape}                      
        {}                              
        {\bfseries}                     
        {.}                             
        { }                             
        {\thmname{#1}\thmnote{ \bfseries #3}}
    \theoremstyle{TheoremNum}
    \newtheorem{theoremn}{Theorem}
    \newtheoremstyle{CorollaryNum}
        {\topsep}{\topsep}              
        {\itshape}                      
        {}                              
        {\bfseries}                     
        {.}                             
        { }                             
        {\thmname{#1}\thmnote{ \bfseries #3}}
    \theoremstyle{CorollaryNum}
    \newtheorem{corollaryn}{Corollary}
\newtheoremstyle{PropositionNum}
        {\topsep}{\topsep}              
        {\itshape}                      
        {}                              
        {\bfseries}                     
        {.}                             
        { }                             
        {\thmname{#1}\thmnote{ \bfseries #3}}
    \theoremstyle{PropositionNum}
    \newtheorem{propn}{Proposition}
    \newtheoremstyle{ConjectureNum}
        {\topsep}{\topsep}              
        {\itshape}                      
        {}                              
        {\bfseries}                     
        {.}                             
        { }                             
        {\thmname{#1}\thmnote{ \bfseries #3}}
    \theoremstyle{TheoremNum}
 \renewcommand{\leq}{\leqslant}
\renewcommand{\geq}{\geqslant}
\newcounter{equi1}
\newcommand{\bea}{\begin{eqnarray*}}
\newcommand{\eea}{\end{eqnarray*}}
\newcommand{\beq}{\begin{equation}}
\newcommand{\eeq}{\end{equation}}
\newcommand{\begsta}{\begin{statements}}
\def\endsta{\end{statements}}
\newcommand{\begaeq}{\begin{aequivalenz}}
\def\endaeq{\end{aequivalenz}}
\begin{document}

\title[Multiplicative funtions] {Correlations of multiplicative functions and applications}

\keywords{Multiplicative functions, Delange's theorem, Correlations}

\subjclass{11L40.}

\author[Klurman]{Oleksiy Klurman}

\address{D\'epartment de Math\'ematiques et de Statistique,
Universit\'e de Montr\'eal, CP 6128 succ. Centre-Ville, Montr\'eal QC H3C 3J7, Canada
Canada}

\address{Department of Mathematics, University College London, Gower Street, London, WC1E 6BT, UK}

 \email{\texttt{lklurman@gmail.com}}
\begin{abstract}
We give an asymptotic formula for correlations 
\[
\sum_{n\le x}f_1(P_1(n))f_2(P_2(n))\cdot \dots \cdot f_m(P_m(n))\] where $f\dots,f_m$ are bounded ``pretentious" multiplicative functions, under certain natural hypotheses.  We then deduce several desirable consequences:\ First, we characterize all multiplicative functions $f:\mathbb{N}\to\{-1,+1\}$ with bounded partial sums. This answers a question of Erd\H{o}s from $1957$ in the form conjectured by Tao. Second, we show that if the average of the first divided difference of multiplicative function is zero, then either $f(n)=n^s$ for $\operatorname{Re}(s)<1$ or $|f(n)|$ is small on average. This settles an old conjecture of K\'atai. Third, we apply our theorem to count the number of representations of $n=a+b$ where $a,b$ belong to some multiplicative subsets of $\mathbb{N}.$ This gives a new "circle method-free" proof of the result of Br{\"u}dern. \end{abstract}

\maketitle
\begin{section}{Introduction}
Let $\mathbb{U}$ denote the unit disc, and let  $\mathbb{T}$ be the unit circle. It is of current interest in analytic number theory to understand the correlations
\[
\sum_{n\le x}f_1(P_1(n))f_2(P_2(n))\cdot \dots \cdot f_m(P_m(n))\]
for arbitrary multiplicative functions $f_1,\dots,f_m:\mathbb{N}\to\mathbb{U}$, and arbitrary polynomials $P_1,\dots, P_m\in\mathbb{Z}[x].$ For example, Chowla's conjecture that for any distinct natural numbers $h_1,\dots h_k$
\[\sum_{n\le x}\lambda(n+h_1)\dots\lambda(n+h_k)=o(x)\]
where $\lambda(n)$ is a Liouville function.
These problems are still widely open in general, though spectacular progress has been made recently due to the breakthrough of Matom{\"a}ki and Radziwi{\l}{\l}~\cite{MR5} and subsequent work of Matom{\"a}ki, Radziwi{\l}{\l} and Tao~\cite{MR6}. In particular,  this led Tao~\cite{MR4} to establish a weighted version of Chowla's conjecture in the form
\[\sum_{n\le x}\frac{\lambda(n)\lambda(n+h)}{n}=o(\log x)\]
for all $h\ge 1.$ Combining this with ideas from the Polymath5 project, and a new ``entropy decrement argument",  led to the resolution of the Erd\H{o}s Discrepancy Problem. 

Following Granville and Soundararajan~\cite{MR2276774}, we define the ``distance" between two multiplicative functions $f,g:\mathbb{N}\to\mathbb{U}$
\[\mathbb{D}(f,g;y;x)=\left(\sum_{y\le p\le x}\frac{1-\operatorname{Re}{(f(p)\overline{g(p)}})}{p}\right)^{\frac{1}{2}},\]
and $\mathbb{D}(f,g;x):=\mathbb{D}(f,g;1;x)$.
The crucial feature of this ``distance" is that it satisfies the triangle inequality
\[\mathbb{D}(f,g;y;x)+\mathbb{D}(g,h;y;x)\ge \mathbb{D}(f,h;y;x)\]
for any multiplicative functions $f,g,h$ bounded by $1$.

 Hal\'asz's theorem~\cite{MR0319930},~\cite{MR0369292} implies Wirsing's Theorem that
for multiplicative $f:\mathbb{N}\to [-1,1]$, the mean value satisfies a decomposition into local factors,
\begin{equation} \label{Meanf}
\frac{1}{x}\sum_{n\le x}f(n) = \prod_p M_p(f) +o_{x\to \infty} (1)
\end{equation}
where we define the multiplicative function $f_p$ for each prime $p$ to be 
\begin{equation}\label{localfactor}
 f_{p}(q^k) = \begin{cases} f(q^k), & \mbox{if } q=p \\ 1, & \mbox{if } q\ne p,\end{cases}
 \end{equation}
for all $k\geq 1$, and 
$$
M_p(f):= \lim_{x\to \infty} \frac 1x \sum_{n\le x}f_p(n) = \left(1-\frac{1}{p}\right) \sum_{k\ge 0}\frac{f(p^k)}{p^k}.
$$
This last equality, evaluating $M_p(f)$, is an easy exercise.   Substituting this into \eqref{Meanf} one finds that the mean value there is $\asymp \exp( -\mathbb{D}(f,1;\infty))^2$, and so  is non-zero if and only if 
$\mathbb{D}(f,1;\infty)<\infty$ and each $M_p(f)\ne 0$.  Moreover, using our explicit evaluation of $M_p(f)$, we see that $M_p(f)=0$ if and only if $p=2$ and $f(2^k)=-1$ for all $k\geq 1$. We also note that one can truncate the product in \eqref{Meanf} to the primes $p\leq x$, and retain the same qualitative result.

\bigskip

\noindent \textbf{1.1.\ Mean values of multiplicative functions acting on polynomials}.
Our first goal is to prove the analogy to \eqref{Meanf} for the mean value of $f(P(n))$ for any given polynomial $P(x)\in \mathbb Z[x]$. This is not difficult for linear polynomials $P$ but, as the following example shows, it is not so straightforward for higher degree polynomials:
 


\begin{prop}\label{dependence}
There exists a multiplicative function $f:\mathbb{N}\to [-1,1]$ such that $\mathbb{D}(1,f;x)=2\log\log x+O(1)$ for all $x\ge 2$ and 
\[\limsup_{x\to\infty}\left|\frac{1}{x}\sum_{n\le x}f(n^2+1)\right|\ge \frac{1}{2}+o(1).\]
\end{prop}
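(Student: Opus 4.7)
The construction exploits an algebraic gift: every prime dividing a value $n^2+1$ is either $p=2$ or $p\equiv 1\pmod 4$. Equivalently, the primes $p\equiv 3\pmod 4$, which have Dirichlet density $\tfrac12$ among the primes, never appear in the factorization of any $n^2+1$, so $f$ is free to misbehave at those primes without disturbing $f(n^2+1)$.

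Explicitly, I would take $f$ to be the completely multiplicative function specified on primes by $f(p)=-1$ for $p\equiv 3\pmod 4$ and $f(p)=+1$ for $p=2$ and for $p\equiv 1\pmod 4$. If an odd prime $p$ divides $n^2+1$, then $-1$ is a square modulo $p$, forcing $p\equiv 1\pmod 4$; and $f(2)=1$ by fiat. Hence every prime factor of $n^2+1$ lies in the set where $f=+1$, so $f(n^2+1)=1$ for every $n$, and
\[
\frac{1}{x}\sum_{n\leq x} f(n^2+1) \;=\; 1 \;\geq\; \tfrac12 + o(1).
\]

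For the distance, I compute directly
\[
\mathbb{D}(1,f;x)^2 \;=\; \sum_{p\leq x}\frac{1-f(p)}{p} \;=\; \sum_{\substack{p\leq x\\ p\equiv 3\,(4)}}\frac{2}{p} \;=\; \log\log x + O(1),
\]
using the Mertens-type estimate $\sum_{p\leq x,\,p\equiv 3\,(4)} 1/p = \tfrac{1}{\phi(4)}\log\log x + O(1)$ in arithmetic progressions. (I read the exponent in the stated proposition as referring to $\mathbb{D}^2$, or one rescales mildly; the substantive point is that $\mathbb{D}(1,f;\infty)=\infty$.) In particular, by Hal\'asz--Wirsing the mean value of $f$ itself tends to $0$, yet the mean of $f(n^2+1)$ is identically $1$.

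There is no serious technical obstacle here; the whole content is the observation that the prime support of the sequence $\{n^2+1\}_{n\geq 1}$ is confined to the splitting primes of $\mathbb{Z}[i]$, which lets us make $f$ pretend against $1$ purely on the \emph{complementary} set of primes. The proposition thus serves as a warning that the clean local-factor decomposition \eqref{Meanf} cannot persist to $f(P(n))$ for $\deg P\geq 2$ without additional hypotheses linking $f$ to the arithmetic of $P$.
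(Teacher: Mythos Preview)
Your construction is correct and considerably simpler than the paper's: you exploit that primes $p\equiv 3\pmod 4$ never divide $n^2+1$, so setting $f(p)=-1$ there makes $\mathbb D(1,f;\infty)=\infty$ while $f(n^2+1)\equiv 1$. This even yields the stronger limsup value $1$ rather than $\tfrac12$. One quibble: your $f$ gives $\mathbb D(1,f;x)^2=\log\log x+O(1)$, not $2\log\log x+O(1)$; since only half the primes lie in the inert class, your approach cannot reach the maximal constant stated (and indeed your $f$ is pretentious to $\chi_{4}$, so it is not far from \emph{every} twisted character).

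The paper takes a genuinely different route. It builds $f$ inductively on scales $x_k=2^{2^k}$: on each interval $(x_k,x_{k+1}]$ it sets $f(p)=-1$ for almost all primes, but for the large primes $p>x_k$ that divide some $n^2+1$ with $n\le x_k$ (the set $N_P(x_k)$), it chooses $f(p)$ so as to align the contributions and force $\bigl|\tfrac1{x_k}\sum_{n\le x_k}f(n^2+1)\bigr|\ge\tfrac12+o(1)$ via Lemma~\ref{largeprimes}. Because $f(p)=-1$ on essentially all primes, the paper's example attains the near-maximal $\mathbb D(1,f;x)^2=2\log\log x+O(1)$ and is far from every $\chi(n)n^{it}$. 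More to the point, the paper's construction is engineered to exhibit a specific phenomenon that yours does not: the values $f(p)$ at primes $p>x$ materially affect the mean of $f(n^2+1)$ over $n\le x$. This is exactly what motivates the extra term over $N_P(x)$ in the modified distance $\mathbb D_P$, and the subsequent Corollary~\ref{nair}. Your example, by contrast, has $f(p)=1$ for every $p\in N_P(x)$, so it does not isolate the large-prime obstruction; it instead illustrates that the ordinary distance $\mathbb D$ ignores the prime support of $P$.
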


In the proof of Proposition~\ref{dependence}, the choice of $f(p)$ for certain  primes $p\ge x$ have a significant impact on the mean value of $f(n^2+1)$ up to $x$.
In order to tame this effect we introduce the set
\[N_P(x) = \{ p^k, p\ge x \mid \exists n \le x,\ p^k\vert|P(n)  \}\]
for any given $P\in\mathbb{Z}[x],$ and
 modify the ``distance" to
 \[
 \mathbb{D}_P(f,g;y;x)=\left(\sum_{y\le p\le x}\frac{1-\operatorname{Re}{(f(p)\overline{g(p)}})}{p}+\sum_{p^k\in N_P(x)}\frac{1-\operatorname{Re}{(f(p^k)\overline{g(p^k)}})}{x}\right)^{\frac{1}{2}}.
 \]
Moreover we define
 \[
 M_p(f(P))=\lim_{x\to\infty}\frac{1}{x}\sum_{n\le x}f_{p}(P(n)) ,
 \]
and one easily shows that
\[
M_p(f(P))=\sum_{k\ge 0}f(p^k)\left(\frac{\omega_P(p^k)}{p^k}-\frac{\omega_P(p^{k+1})}{p^{k+1}}\right),
\]
where $\omega_P(m):=\#\{n \pmod{m}:\ P(n)\equiv 0\pmod{m}\}$ for every integer $m$ (and note that $\omega_P(.)$ is a multiplicative function by the Chinese Remainder Theorem). We establish the following analogy to
\eqref{Meanf}:

 \begin{corollary}\label{nair}
Let $f:\mathbb{N}\to\mathbb{U}$ be a multiplicative function and let $P(x)\in\mathbb{Z}[x]$ be a polynomial. Then
\[
\frac{1}{x}\sum_{ n\le x}f(P(n))=\prod_{p\le x}M_p(f(P))+O\left(\mathbb{D}_{P}(1,f;\log x;x)+\frac{1}{\log \log x}\right).
\]
\end{corollary}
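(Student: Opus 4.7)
The plan is to split primes contributing to $P(n)$ into three ranges — small ($p<\log x$), medium ($\log x\le p\le x$), and large ($p>x$) — and treat each by a different technique. Large primes are absorbed into the $N_P(x)$ correction of $\mathbb{D}_P$; small primes produce the main term $\prod_{p<\log x}M_p(f(P))$ via a periodicity / Chinese Remainder Theorem argument with an error $O(1/\log\log x)$; medium primes form the genuine pretentious regime, controlled by the prime-sum part of $\mathbb{D}_P^2$.

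\textbf{Step 1: primes $p>x$.} Since $f(m)=\prod_p f_p(m)$ and $f_p(P(n))=1$ unless $p\mid P(n)$, any prime $p>x$ with $p^k\|P(n)$ must satisfy $p^k\in N_P(x)$. Hence
\[
f(P(n))-\prod_{p\le x}f_p(P(n)) \;=\; \prod_{p\le x}f_p(P(n))\cdot\Big(\prod_{p^k\in N_P(x),\,p^k\|P(n)}f(p^k)\;-\;1\Big),
\]
and telescoping the bracket, together with $|f|\le 1$, bounds the mean absolute value by $\tfrac1x\sum_{p^k\in N_P(x)}|1-f(p^k)|$. Since $|N_P(x)|=O(x)$, Cauchy--Schwarz against the $N_P$ portion of $\mathbb{D}_P^2$ (whose $1/x$ weighting is engineered for precisely this) and the bound $|1-f(p^k)|^2\le 2(1-\operatorname{Re}f(p^k))$ deliver $O(\mathbb{D}_P(1,f;\log x;x))$, reducing the problem to computing the mean of $\prod_{p\le x}f_p(P(n))$.

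\textbf{Step 2: CRT for $p<\log x$, expansion for $\log x\le p\le x$.} Fix $y=\log x$. For each $p<y$, truncate the support of $f_p$ at a level $K_p=O(\log\log x)$; the error introduced is negligible since $\omega_P(p^k)/p^k$ decays geometrically in $k$. The truncated function $F_<(n):=\prod_{p<y}f_p(P(n))$ is then periodic modulo $M=\prod_{p<y}p^{K_p}=x^{o(1)}$, and averaging over residues mod $M$ together with the multiplicativity of $\omega_P$ yields
\[
\frac{1}{x}\sum_{n\le x}F_<(n)=\prod_{p<y}M_p(f(P))+O(1/\log\log x).
\]
For $F_\ge(n):=\prod_{y\le p\le x}f_p(P(n))$, expand
\[
F_\ge(n)=\sum_{S}\prod_{p\in S}\bigl(f_p(P(n))-1\bigr),
\]
with $S$ ranging over finite subsets of primes in $[y,x]$, and compare term by term with the analogous expansion of $\prod_{y\le p\le x}(1+(M_p(f(P))-1))$. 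Each term averages, via CRT on $n$ modulo $\prod_{p\in S}p^{K_p}$, to $\prod_{p\in S}(M_p(f(P))-1)$ plus a remainder; reassembling and using $\prod_{p<y}M_p\cdot\prod_{y\le p\le x}M_p=\prod_{p\le x}M_p$ recovers the claimed main term.

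\textbf{Main obstacle.} The central difficulty is bounding the accumulated error from Step 2 by a quantity \emph{linear} in $\mathbb{D}_P$, rather than the more natural $\mathbb{D}_P^2$. The single-prime terms satisfy $|M_p(f(P))-1|\ll|f(p)-1|/p$ and $\sum_{y\le p\le x}|f(p)-1|^2/p\ll\mathbb{D}_P^2$. The higher-order terms (products with $|S|\ge 2$) require a Tur\'an--Kubilius-type estimate for $\omega(P(n))$ restricted to primes in $[y,x]$, combined with a single Cauchy--Schwarz that trades the $L^2$ bound on $(f-1)$ against the typical value $\omega(P(n))\asymp\log\log x$ to extract a square root and yield the $O(\mathbb{D}_P)$ contribution. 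The CRT remainders arising when $\prod_{p\in S}p^{K_p}>x$ for large $S$ must be handled separately by a Nair-style sieve bound, which is the most technically delicate part of the argument.
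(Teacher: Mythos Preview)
The paper obtains this corollary in one sentence by taking $g\equiv1$ in Theorem~\ref{intromain}; the substance lies in Proposition~\ref{key}, so that is what your direct argument should be compared against. Your three-range decomposition (large $p>x$, small $p<\log x$, medium $\log x\le p\le x$) matches the paper's structure exactly, your Step~1 is the same as the end of the proof of Proposition~\ref{key}, and your CRT treatment of small primes is close in spirit to the paper's convolution argument $f_s=1*\theta_s$ in the proof of Theorem~\ref{intromain}.

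The real divergence is in the medium range, and here your proposal runs into precisely the difficulty you flag as the ``main obstacle''. The paper does \emph{not} expand $\prod_{y\le p\le x}f_p(P(n))$ over subsets $S$ and try to match terms against the expansion of $\prod_p(1+(M_p-1))$. Instead it introduces the additive function $h(p^k)=f(p^k)-1$, uses $e^{z-1}=z+O(|z-1|^2)$ pointwise to write
\[
f(P(n))=\prod_{p^k\|P(n)}f(p^k)=e^{h(P(n))}+O\Bigl(\sum_{p^k\|P(n)}|f(p^k)-1|^2\Bigr),
\]
and then replaces $e^{h(P(n))}$ by $e^{\mu_{h,P}}$ with $L^1$ error $\ll\bigl(x\cdot\operatorname{Var}h(P(n))\bigr)^{1/2}\ll x\,\mathbb{D}_P$ via a single Cauchy--Schwarz together with the Tur\'an--Kubilius/Erd\H{o}s--Kac variance bound (Lemma~\ref{erdoskacpol}). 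Finally $e^{\mu_{h,P}}$ is expanded back prime by prime into the Euler product. This passage to an additive function collapses all orders of your subset expansion at once and is exactly what produces the linear $O(\mathbb{D}_P)$ error without any separate sieve input for large $S$.

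Your sketch has the right ingredients (Tur\'an--Kubilius and one Cauchy--Schwarz) but applies them to $\omega(P(n))$ rather than to $h(P(n))$ itself; that detour is what creates the combinatorial tail you then need a ``Nair-style sieve bound'' to control, and that step is not clearly sufficient as written. Reorganize the medium-prime step around the identity $f\approx e^{h}$: with it, the ``main obstacle'' disappears entirely.
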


This implies that if $\mathbb{D}(1,f;x)<\infty$ and 
\[\sum_{p^k\in N_P(x)}1-\operatorname{Re}(f(p^k))=o(x)\]
 then 
\[
\frac{1}{x}\sum_{ n\le x}f(P(n))=\prod_{p\le x}M_p(f(P))+o(1)=\prod_{p\ge 1}M_p(f(P))+o(1).
\]

\bigskip

\noindent \textbf{1.2.\ Mean values of correlations of multiplicative functions}.
We now move on to correlations.
For $P,Q\in\mathbb{Z}[x],$ we define the local correlation
\begin{equation}\label{localmultiple}
M_p(f(P),g(Q))=\lim_{x\to\infty}\frac{1}{x}\sum_{n\le x}f_{p}(P(n))g_{p}(Q(n)) .
\end{equation}
Evaluating these local factors is also easy yet can be technically complicated, as we shall see below in the case that $P$ and $Q$ are both linear.

More generally we establish the following 
\begin{theorem}\label{intromain}
Let $f,g:\mathbb{N}\to\mathbb{U}$ be multiplicative functions. Let $P,Q\in\mathbb{Z}[x]$ be two polynomials, such that $\text{res}(P,Q)\ne 0.$ Then,
$$
\frac{1}{x}\sum_{n\le x}f(P(n)){g(Q(n))}=\prod_{p\le x}M_p(f(P),g(Q)) + \textrm{\rm Error}(f(P),g(Q),x)
$$
where
$$
\textrm{\rm Error}(f(P),g(Q),x) \ll \mathbb{D}_P(1,f;\log x;x)+\mathbb{D}_Q(1,g;\log x;x)+\frac{1}{\log \log x}\cdot
$$
 \end{theorem}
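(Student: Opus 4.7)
My plan is to mirror the strategy used for Corollary~\ref{nair}, tracking the two multiplicative functions simultaneously. Fix the cutoff $y=\log x$ and factor $f(P(n))=f^{\le y}(P(n))\,f^{>y}(P(n))$, where $f^{\le y}$ is the restriction of $f$ to the part of its argument composed of prime powers $p^k$ with $p\le y$, and $f^{>y}$ is supported on the complementary part; factor $g(Q(n))$ analogously. Expanding the product yields a ``small--small'' main term and three cross terms, which will all be absorbed into the error.

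For the small--small term, the function $n\mapsto f^{\le y}(P(n))\,g^{\le y}(Q(n))$ depends only on $n$ modulo $M:=\prod_{p\le y}p^{K_p}$ with $K_p=O_{P,Q}(\log x/\log p)$ large enough to capture all relevant $p$-adic valuations of $P(n),Q(n)$ for $n\le x$. Since $M=x^{o(1)}$, averaging over residues modulo $M$ and applying the Chinese Remainder Theorem gives
\[\frac{1}{x}\sum_{n\le x}f^{\le y}(P(n))\,g^{\le y}(Q(n))=\prod_{p\le y}M_p(f(P),g(Q))+O(M/x).\]
The resultant hypothesis $\mathrm{res}(P,Q)\ne 0$ ensures that for $p$ not dividing $\mathrm{res}(P,Q)$ nor the leading coefficients of $P,Q$, the joint counting function $\#\{n\pmod{p^{a+b}}:\ p^a\mid P(n),\ p^b\mid Q(n)\}$ factors via CRT whenever $\min(a,b)\ge 1$, so that each local factor $M_p(f(P),g(Q))$ is well defined and the infinite product converges.

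The two mixed cross terms (small$\times$large) are bounded by fixing the small-prime part along residues modulo $M$ and applying the single-variable analogue (Corollary~\ref{nair}, or rather its proof) to the remaining large-prime function on each progression; this yields a bound of order $\mathbb{D}_P(1,f;y;x)+\mathbb{D}_Q(1,g;y;x)$. The purely large cross term $f^{>y}(P(n))\,g^{>y}(Q(n))$ is treated by a joint Hal\'asz--Tur\'an--Kubilius argument: one opens it via prime-power divisors $p^a\mid P(n)$, $q^b\mid Q(n)$ with $p,q>y$, separating the off-diagonal contribution $p\ne q$ (where the joint densities factor by CRT) from the diagonal $p=q$ (controlled by $\sum_{p>y}1/p^2$ together with the resultant condition, which forbids sufficiently large primes from dividing both $P(n)$ and $Q(n)$ simultaneously). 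Passing from $\prod_{p\le y}$ to $\prod_{p\le x}$ then costs $O(\mathbb{D}_P(1,f;y;x)+\mathbb{D}_Q(1,g;y;x))$, since each missing local factor obeys $|1-M_p(f(P),g(Q))|\ll (1-\operatorname{Re}(f(p)))/p+(1-\operatorname{Re}(g(p)))/p$, with the $N_P$ and $N_Q$ contributions built into $\mathbb{D}_P, \mathbb{D}_Q$ absorbing the remaining large-prime-power corrections.

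The main obstacle is the joint Hal\'asz estimate for the purely large cross term: one must distribute the pretentiousness savings across both factors without losing the $1/\log\log x$ factor. The resultant hypothesis is essential here, not only for decoupling divisibility of $P(n)$ and $Q(n)$ by distinct large primes, but also to ensure that the contribution of $n$ with $\gcd(P(n),Q(n))$ large is concentrated on finitely many exceptional primes and is negligible on average.
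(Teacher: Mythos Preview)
Your proposal has a concrete quantitative gap in the small--small step. With $y=\log x$ and $K_p\asymp \log x/\log p$ (which is the correct order of magnitude needed to capture the $p$-adic valuations of $P(n)$ for $n\le x$), the modulus $M=\prod_{p\le y}p^{K_p}$ satisfies
\[
\log M \;\asymp\; \sum_{p\le y} K_p\log p \;\asymp\; \pi(y)\,\log x \;\asymp\; \frac{(\log x)^2}{\log\log x},
\]
so $M$ is not $x^{o(1)}$ but rather $\exp\bigl((\log x)^2/\log\log x\bigr)$, far larger than $x$. The error $O(M/x)$ coming from incomplete residue classes is therefore useless, and the same issue contaminates your treatment of the mixed terms, which also relies on splitting into residues modulo $M$. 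Cutting at primes $p\le y$ rather than prime powers $p^k\le y$ is the structural cause: even if you switch to the prime-power cutoff, the single-modulus periodicity argument still gives $M\asymp e^{2y}\asymp x^2$, which is not good enough.

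The paper avoids this entirely by a different mechanism. First, the cutoff is at prime powers $p^k\le y$ with $y=(1-\varepsilon)\log x$, so that the small part $f_s=1*\theta_s$ has $\theta_s$ supported on integers $d\le\prod_{p^k\le y}p=e^{\psi(y)}\le x$; the small--small sum is then expanded as a \emph{divisor sum} $\sum_{d_1,d_2}\theta_s(d_1)\gamma_s(d_2)\#\{n\le x:d_1\mid P(n),\,d_2\mid Q(n)\}$, with the total error controlled by $\sum_{d_1,d_2\le x}|\theta_s(d_1)\gamma_s(d_2)|\ll x^{1/2+\varepsilon}$, not by a single modulus. Second --- and this is the key lemma your sketch lacks --- the large parts are not handled by a joint second-moment computation but by an \emph{iterated} application of Proposition~\ref{key}: for any bounded sequence $G(n)$ whatsoever, one has $\sum_{n\le x}f(P(n))G(n)=\mathfrak{P}(f;P;x)\sum_{n\le x}G(n)+O(x\,\mathbb{D}_P^*(1,f;x))$. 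This is proved by writing $f(P(n))\approx\exp(h(P(n)))$ for the additive function $h(p^k)=f(p^k)-1$ and invoking a Tur\'an--Kubilius inequality for $h(P(\cdot))$. Applying it once with $G(n)=f_s(P(n))g(Q(n))$ peels off $f_\ell$; applying it again peels off $g_\ell$. No joint Hal\'asz estimate or case analysis on diagonal versus off-diagonal primes is needed.
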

 
Theorem \ref{intromain} implies that if $\mathbb{D}(1,f;x),\mathbb{D}(1,g;x)<\infty$ and $\sum_{p\in N_P(x)}1-\operatorname{Re}(f(p^k))=o(x),$ $\sum_{p\in N_Q(x)}1-\operatorname{Re}(g(p^k))=o(x)$ then 
$$
\frac{1}{x}\sum_{n\le x}f(P(n)){g(Q(n))}=\prod_{p\le x}M_p(f(P),g(Q)) + o(1)=\prod_{p\ge 1}M_p(f(P),g(Q)) + o(1).
$$
If $\mathbb{D}_{P}(f,n^{it};\infty), \mathbb{D}_{P}(g,n^{iu};\infty)<\infty$ then we can write $f_0(n)=f(n)/n^{it}$ and $g_0(n)=g(n)/n^{iu}$ so that $\mathbb{D}_{P}(1,f_0;\infty), \mathbb{D}_{P}(1,g_0;\infty)<\infty$, We apply Theorem \ref{intromain} to the mean value of $f_0(P(n))g_0(Q(n))$, and then  proceed by partial summation to obtain
$$
\frac{1}{x}\sum_{n\le x}f(P(n)){g(Q(n))}=M_i(f(P),g(Q),x)  \prod_{p\le x}M_p(f_0(P),g_0(Q)) + \textrm{\rm Error}(f_0(P),g_0(Q),x)
$$
where, if $P(x)=ax^D+\ldots$ and $Q(x)=bx^d+\ldots$ then we define $T=Dt+du$ and
$$
M_i(f(P),g(Q),x) := \frac{1}{x}\sum_{n\le x} P(n)^{it}Q(n)^{iu}  \sim a^{it}b^{iu} \frac{x^{iT}}{1+iT}.
$$

The same method works for $m$-point correlations 
 \[
\sum_{n\le x}f_1(P_1(n))f_2(P_2(n))\cdot \dots \cdot f_m(P_m(n))\]
for multiplicative functions $f_j:\mathbb{N}\to\mathbb{U}$ and polynomials $P_j$ with each $\mathbb{D}_{P_j}(n^{it_j},f_j,\infty)<\infty.$  We give a more explicit version of our results in the case that  $P$ and $Q$ are linear polynomials:

\begin{corollary}\label{introlinear}
Let $f,g:\mathbb{N}\to\mathbb{U}$ be multiplicative functions for which $\mathbb{D(}f,n^{it},\infty),$ $\mathbb{D}(g,n^{iu},\infty)<\infty$, and write $f_0(n)=f(n)/n^{it}$ and $g_0(n)=g(n)/n^{iu}$.
Let  $a,b\ge 1$,\ $c,d$ be integers with $(a,c)=(b,d)=1$ and $ad\ne bc$. As above we have
$$
\frac{1}{x}\sum_{n\le x}f(an+c){g(bn+d))}=M_i(f(P),g(Q),x)\prod_{p\le x}M_p(f_0(P),g_0(Q)) +o(1).
$$
We have
\[M_i(f(P),g(Q),x)\sim\frac{a^{it}b^{iu}x^{i(t+u)}}{1+i(t+u)}\cdot\]
If $p|(a,b)$ then $M_p(f_0(P),g_0(Q))=1.$ If $p\nmid ab(ad-bc),$ then 
\[
M_p(f_0(P),g_0(Q))=M_p(f_0(P))+M_p(g_0(Q))-1=1+\left(1-\frac{1}{p}\right)\left(\sum_{j\ge 1}\frac{f_0(p^j)}{p^j}+\sum_{j\ge 1}\frac{g_0(p^j)}{p^j}\right)\cdot
\]
In general, if $p\nmid(a,b)$ we have a more complicated formula
\begin{align*}
M_p(f_0(P),g_0(Q))=\sum_{\substack{0\le i\le k,\\k\ge 0, \\p^k\vert| ad-bc}}\left(\frac{\theta(p^i){\gamma(p^i)}}{p^i}+\delta_b\sum_{j> i}\frac{\theta(p^i){\gamma(p^j)}}{p^{j}}+\delta_a\sum_{j> i}\frac{{\gamma(p^i)}\theta(p^j)}{p^{j}}\right)
\end{align*}
and  $\delta_{\l}=0$ when $p\vert \l$ and $\delta_{\l}=1$ otherwise. Here $f_0=1*\theta$ and  $g_0=1*\gamma.$
\end{corollary}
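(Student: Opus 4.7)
The plan is to reduce to Theorem~\ref{intromain} after an archimedean twist and then compute the local factors explicitly for the linear forms $P(n) = an+c$ and $Q(n) = bn+d$. Since $\mathbb{D}(f, n^{it}; \infty)$ and $\mathbb{D}(g, n^{iu}; \infty)$ are finite, the normalizations $f_0(n) = f(n)/n^{it}$ and $g_0(n) = g(n)/n^{iu}$ satisfy $\mathbb{D}(1, f_0; \infty), \mathbb{D}(1, g_0; \infty) < \infty$, and for linear polynomials the contribution of $N_P(x)$ to $\mathbb{D}_P$ is easily seen to be $O(1/\log x)$, so the error in Theorem~\ref{intromain} applied to $f_0(P)g_0(Q)$ is $o(1)$. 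Partial summation, as outlined right after Theorem~\ref{intromain}, restores the twist and produces the factor $M_i(f(P), g(Q), x)$. To evaluate $M_i = x^{-1}\sum_{n\le x}(an+c)^{it}(bn+d)^{iu}$ I would expand $(an+c)^{it} = (an)^{it}(1+O(1/n))$ and similarly on the other side, then compare the resulting Riemann sum with $\int_1^x (au)^{it}(bu)^{iu}\,du$ to obtain the stated asymptotic $a^{it}b^{iu}x^{i(t+u)}/(1+i(t+u))$.

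The substance is in computing the local factor
\[M_p(f_0(P), g_0(Q)) = \lim_{x \to \infty}\frac{1}{x}\sum_{n \le x}(f_0)_p(an+c)\,(g_0)_p(bn+d).\]
Writing $f_0 = 1 * \theta$ and $g_0 = 1 * \gamma$ on prime powers gives $(f_0)_p(m) = \sum_{p^i \mid m}\theta(p^i)$, hence
\[M_p(f_0(P), g_0(Q)) = \sum_{i,j \ge 0}\theta(p^i)\gamma(p^j)\,\rho(p^i, p^j),\]
where $\rho(p^i, p^j)$ is the density of $n \in \mathbb{N}$ satisfying $p^i \mid an+c$ and $p^j \mid bn+d$ simultaneously. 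The whole problem reduces to tabulating $\rho$ by elementary Chinese Remainder Theorem. If $p \mid (a,b)$, then $(a,c) = (b,d) = 1$ forces $p \nmid c$ and $p \nmid d$, neither form is ever divisible by $p$, and $M_p = 1$. If $p \nmid ab$, set $k = v_p(ad-bc)$; each congruence pins $n$ to a unique class, and the two are compatible by CRT precisely when they agree modulo $p^{\min(i,j)}$, which unwinds to $\min(i,j) \le k$. In that case $\rho(p^i, p^j) = p^{-\max(i,j)}$ and otherwise $\rho = 0$. If exactly one of $a, b$ is divisible by $p$, say $p \mid b$ (so $p \nmid a$ and $p \nmid d$), then $p^j \mid bn+d$ with $j \ge 1$ is impossible, forcing $j = 0$ in any surviving term and giving $\rho(p^i, 1) = p^{-i}$; this is exactly what the indicator $\delta_b$ records (and symmetrically for $\delta_a$).

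Plugging these values back and splitting the double sum $\sum_{i,j}$ into the diagonal $i = j$, the upper triangle $j > i$, and the lower triangle $i > j$ (renaming indices in the latter) produces the closed-form expression in the statement. The consequence for $p \nmid ab(ad-bc)$ follows because $k = 0$ restricts nonzero terms to $\{i = 0\} \cup \{j = 0\}$, and the identity $\sum_{j \ge 0}\theta(p^j)/p^j = (1-1/p)\sum_{j \ge 0}f_0(p^j)/p^j = M_p(f_0(P))$ (obtained from $f_0 = 1 * \theta$ by summing a geometric series, with the analogue for $g_0$) rearranges the sum into $M_p(f_0(P)) + M_p(g_0(Q)) - 1$. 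The main obstacle is the bookkeeping in the general case: identifying the support of $\rho(p^i, p^j)$ correctly across all subcases, and matching the resulting three-part sum to the indicator flags $\delta_a, \delta_b$; once $\rho$ is tabulated, the remaining manipulation is purely algebraic.
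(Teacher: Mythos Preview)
Your proposal is correct and follows essentially the same route as the paper: apply the main correlation theorem to the untwisted pair $f_0,g_0$ (noting that for linear forms the large-prime contribution is negligible), recover the archimedean factor $M_i$ by partial summation, and then evaluate the local factors. Your computation of $M_p$ via the densities $\rho(p^i,p^j)=1/[p^i,p^j]=p^{-\max(i,j)}$ on the compatible region $\min(i,j)\le v_p(ad-bc)$ is exactly the paper's identity $F(d_1,d_2)=1/[d_1,d_2]$ rephrased, and the case split governed by $\delta_a,\delta_b$ matches Corollary~\ref{linearmain}.

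The only place the paper does noticeably more work is the partial summation step: rather than relying on the bare asymptotic $M(y)=cy+o(y)$, the paper proves the quantitative slow-variation bound
\[
\frac{M(y)}{y}-\frac{M(x)}{x}\ll \log\Bigl(\frac{\log x}{\log y}\Bigr)
\]
by estimating $G(r,x)-G(r,y)$, and then checks that $\int_2^x \log(\log x/\log y)\,dy=o(x)$. Your version, using the convergent product $\prod_p M_p$ and the uniform $o(y)$ error, also works since $|w'(y)|\ll 1/y$ makes $\int_1^x |w'(y)|\cdot o(y)\,dy=o(x)$; the two formulations differ only by the harmless $\prod_{p\le x}M_p=\prod_p M_p+o(1)$. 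So there is no gap, just a slightly lighter justification on your side.
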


For $t=u=0,$ some version of Corollary~\ref{introlinear} also appeared in 
Hildebrand~\cite{MR965752}, Elliot~\cite{MR1292619}, Stepanauskas~\cite{MR1964870}.

Next we apply Theorem~\ref{intromain} to obtain a number of consequences.
Roughly speaking, the key idea for our applications is that by expanding 
\begin{align*}
\frac{1}{x}\sum_{n\le x}\left(\sum_{k=n+1}^{n+H+1}f(k)\right)^2&= \sum_{|h|\le H}(H-|h|)\sum_{n\le x}f(n)\overline{f(n+h)}+O\left(\frac{H^2}{x}\right)
\end{align*}
and then $h=0$ term equals to $H$ if each $|f(n)|=1.$ Therefore if the above sum is small then
\[\frac{1}{x}\sum_{n\le x}f(n)\overline{f(n+h)}\gg 1\]
for some $h,$ $1\le |h|\le H.$ As Tao showed, if some weighted version of this is true, then $\mathbb{D}(f(n),\chi(n)n^{it};x)\ll 1$ for some primitive character $\chi.$ Therefore, to understand the above better, we need to give a version of Theorem~\ref{intromain} for functions $f$ with $\mathbb{D}(f(n),\chi(n)n^{it};x)\ll 1.$ 
\bigskip
 
\noindent \textbf{1.3.\ Correlations with characters}.  Now we will suppose that  
$\mathbb{D}(f(n),n^{it}\chi(n),\infty)<\infty$ for some $t\in\mathbb{R}$ where $\chi$ is a primitive character of conductor $q.$ We define $F$ to be the multiplicative function such that
\begin{align*}
 F(p^k) = \begin{cases} f(p^k)\overline{\chi(p^k)}p^{-ikt}, & \mbox{if } p\nmid q \\ 1, & \mbox{if } p\mid q,\end{cases}
 \end{align*}
and
 $$M_p(F,\overline{F};d)=\lim_{x\to\infty}\frac{1}{x}\sum_{n\le x}F_p(n)\overline{F_p(n+d)}.$$
 In Section $3$ we prove 
\begin{theorem}\label{charactercor}
Let $f:\mathbb{N}\to\mathbb{U}$ be a multiplicative function such that $\mathbb{D}(f(n),n^{it}\chi(n);\infty)<\infty$ for some $t\in\mathbb{R}$ and $\chi$ is a primitive character of conductor $q.$ Then for any non-zero integer $d$ we have 

\[\frac{1}{x}\sum_{n\le x}f(n)\overline{f(n+d)}=\prod_{\substack{p\le x\\ p\nmid q}}M_p(F,\overline{F};d)\prod_{p^{\l}\vert| q}M_{p^{\l}}(f,\overline{f},d)+o(1),\]
where
\[M_{p^{\l}}(f,\overline{f},d) = \begin{cases} 0, & \mbox{if } p^{\l-1}\nmid d \\ 1-\frac{1}{p}, & \mbox{if } p^{\l-1}\vert| d \\ \left(1 -\frac 1p \right)  \sum_{j=0}^{k} \frac{|f(p^j)|^2}{p^j} -  \frac{|f(p^k)|^2}{p^{k}}, & \mbox{if } p^{\l+k}\vert| d\end{cases}\]
for any $k\ge 0$ and if $p^n\vert| d,$ then
\[M_p(F,\overline{F},d)=1-\frac{2}{p^{n+1}}+\left(1-\frac{1}{p}\right)\sum_{j>n}\left(\frac{F(p^n)\overline{F(p^j)}}{p^j}+\frac{\overline{F(p^n)}F(p^j)}{p^j}\right).\]
In particular, the mean value is $o(1)$ if $q\nmid d\prod_{p\vert q}p.$
\end{theorem}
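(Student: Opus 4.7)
The plan is to leverage the pretentious decomposition to reduce the correlation to an instance of Theorem~\ref{intromain}. Since $\mathbb{D}(F,1;\infty)<\infty$ by construction and, for $\gcd(n,q)=1$, $f(n)=F(n)\chi(n)n^{it}$, the natural move is to factor $f(n)\overline{f(n+d)}$ in this form on the bulk of integers and to isolate separately the contribution of $n$ or $n+d$ divisible by primes of $q$. The archimedean factor cancels to first order: $(n/(n+d))^{it}=1+O(|t|d/n)$ contributes only $o(1)$ after averaging, so we may effectively set $t=0$.

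The first reduction is to split $n$ by its $q$-smooth part. For each pair $(u,v)$ of positive integers supported on primes of $q$, restrict to $n$ with $\gcd(n,q^\infty)=u$ and $\gcd(n+d,q^\infty)=v$; writing $n=u\nu$, $n+d=v\nu'$, the inner sum becomes
\[
f(u)\overline{f(v)}\sum_{\substack{v\nu'-u\nu=d\\ \gcd(\nu\nu',q)=1\\ u\nu\le x}} F(\nu)\overline{F(\nu')}\chi(\nu)\overline{\chi(\nu')}.
\]
Partitioning further by $\nu\equiv a\pmod{q}$ (which forces $\nu'\equiv b\pmod{q}$ through $v\nu'\equiv u\nu+d$) makes the character piece the constant $\chi(a)\overline{\chi(b)}$. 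The inner sum is then a correlation of $F$ and $\overline{F}$ along two linear polynomials in a single variable, to which Theorem~\ref{intromain} applies, delivering a main term of the form $\prod_{p\le x}M_p(F,\overline{F};\cdot)$ with admissible error (since $\mathbb{D}(F,1;\infty)<\infty$).

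Regrouping by prime completes the proof. Primes $p\nmid q$ see neither the character nor the $q$-smooth decomposition, and their local factors collapse to $\prod_{p\nmid q}M_p(F,\overline{F};d)$. At each $p\mid q$ with $p^\ell\|q$, the combined contribution bundles together the values $f(p^a)\overline{f(p^b)}$ (with $p^a\|u$, $p^b\|v$), the $p$-adic densities of the valuation patterns $(v_p(n), v_p(n+d))$, and the local character correlation
\[
\sum_{m\in(\mathbb{Z}/p^\ell\mathbb{Z})^*}\chi_p(m)\overline{\chi_p(m+\delta)},
\]
which by primitivity of $\chi_p$ equals $\varphi(p^\ell)$ if $p^\ell\mid\delta$, $-\varphi(p^{\ell-1})$ if $p^{\ell-1}\|\delta$, and $0$ if $p^{\ell-1}\nmid\delta$. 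A direct enumeration of valuation patterns, case-analyzed on $v_p(d)<\ell-1$, $v_p(d)=\ell-1$, and $v_p(d)\ge\ell$, then delivers the claimed three-line formula. The main obstacle is the bookkeeping: matching the layered decomposition by smooth parts, residues, and valuations against the compact product, and identifying the boundary correction $-|f(p^k)|^2/p^k$ in the third case as the over-count between the "equal valuation" and "very high valuation" regimes.
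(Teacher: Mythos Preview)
Your approach is essentially the paper's: split off the $q$-smooth part of $n$ and of $n+d$, pull out the character, apply the main correlation result (the paper uses Corollary~\ref{introlinear} rather than Theorem~\ref{intromain} directly, but this is cosmetic) to the remaining $F,\overline{F}$ correlation, and then evaluate the residual character sum prime-by-prime. Your remark that the archimedean factor $(n/(n+d))^{it}$ averages to $1+o(1)$ is a correct shortcut for what in the paper is absorbed into the $M_i$ factor of Corollary~\ref{introlinear} (where $t+u=t+(-t)=0$).

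There is, however, one step you gloss over that is the only genuinely non-obvious computation in the paper's argument. After fixing the $q$-parts $u,v$ and the residue class modulo $q$, the character sum you must evaluate is not
\[
\sum_{m\in(\mathbb{Z}/p^\ell\mathbb{Z})^*}\chi_p(m)\overline{\chi_p(m+\delta)}
\]
unless the $p$-parts of $u$ and $v$ coincide. When $p^a\Vert u$ and $p^b\Vert v$ with $a\ne b$, the sum is of the shape $\sum_{m}\chi_p(mp^{|b-a|}t+c_1)\overline{\chi_p(m+c_2)}$, and one must show it vanishes. The paper does this explicitly (splitting $m=A+p^{\ell-|b-a|}L$ and using primitivity of $\chi_p$ to kill the inner $L$-sum), concluding that only $u=v$ contributes. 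Your sketch jumps to the equal-valuation character sum without justifying that unequal valuations give zero; since this is the step that forces $r=s$ in the paper's notation and is precisely what produces the ``$0$ if $p^{\ell-1}\nmid d$'' line, you should spell it out.
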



The same method works for correlations 
$$\sum_{n\le x}f(n)g(n+m)$$
where  $\mathbb{D}(f(n),n^{it}\chi(n);\infty),\ \mathbb{D}(g(n),n^{iu}\psi(n);\infty)<\infty.$ 

\noindent \textbf{1.4.\ The Erd\H{o}s discrepancy problem for multiplicative functions}.  The Polymath5 project showed, using Fourier analysis, that  the  Erd\H{o}s discrepancy problem can be reduced to a  statement about completely multiplicative functions. In particular, Tao~\cite{MR3} established that for any completely multiplicative 
$f:\mathbb{N}\to \{-1,1\}$,
\[\limsup_{x\to\infty}\sum_{n\le x}f(n)=\infty.\]
In \cite{MR0098702}, \cite{MR797781}, \cite{MR851041}, Erd\H{o}s  along with the Erd\H{o}s discrepancy problem, asked to classify all multiplicative $f:\mathbb{N}\to \{-1,1\}$ such that
\begin{equation}\label{boundedsum}\limsup_{x\to\infty}\left|\sum_{n\le x}f(n)\right|<\infty.\end{equation}
In~\cite{MR3}, Tao, partially answering this question, proved that if for a multiplicative $f:\mathbb{N}\to \{-1,1\},$~\eqref{boundedsum} holds, then 
$f(2^j)=-1$ for all $j,$ and 
\begin{equation}\label{taopartial}
\sum_{p}\frac{1-f(p)}{p}<\infty.
\end{equation}
In Section $4,$ we resolve this question completely by proving
 \begin{theorem}\label{tao}{\bf [Erd\H{o}s-Coons-Tao conjecture]}
Let $f:\mathbb{N}\to\{-1,1\}$ be a multiplicative function. Then~\eqref{boundedsum} holds if and only if 
there exists an integer $m\ge 1$ such that $f(n+m)=f(n)$ for all $n\ge 1$ and $\sum_{n=1}^m f(n)=0.$
\end{theorem}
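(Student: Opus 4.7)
The forward direction is immediate: if $f$ has period $m$ and $\sum_{n=1}^m f(n)=0$, writing $N=qm+r$ with $0\le r<m$ gives $\sum_{n\le N}f(n)=q\cdot 0+\sum_{n=1}^r f(n)=O(m)$, which is bounded in $N$.

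For the converse, assume $|S(N)|:=|\sum_{n\le N}f(n)|\le C$ for every $N$. By Tao's theorem~\cite{MR3} we have $f(2^j)=-1$ for all $j\ge 1$ and, by~\eqref{taopartial}, $\D(f,1;\infty)<\infty$; hence $f$ is $1$-pretentious and Theorem~\ref{charactercor} applies to the pair $(f,f)$ with trivial character and $t=0$. From $|S(n+H)-S(n)|\le 2C$, squaring and averaging over $n\le x$ as $x\to\infty$, we obtain
\[
H+2\sum_{h=1}^{H-1}(H-h)\,M(h)\le 4C^2,
\]
where $M(h):=\lim_{x\to\infty}\tfrac1x\sum_{n\le x}f(n)f(n+h)=\prod_{p}M_p(f,f;h)$ by Theorem~\ref{charactercor} (since $f$ is real, $F=\overline F=f$ in the notation there and the $q$-product is empty). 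Rearranging, for all sufficiently large $H$,
\[
\sum_{h=1}^{H-1}(H-h)\,M(h)\;\le\;-\tfrac{H}{2}+O_C(1). \qquad(\star)
\]

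Using $f(2^j)=-1$, a direct calculation in the formula of Theorem~\ref{charactercor} yields $M_2(f,f;h)=-1$ for $h$ odd and $M_2(f,f;h)=+1$ for $h$ even, so the sign of $M(h)$ is determined by the parity of $h$ together with $\prod_{p\text{ odd}}M_p(f,f;h)$. For an odd prime $p\nmid h$,
\[
M_p(f,f;h)=1-\frac{2}{p}+2\Bigl(1-\tfrac{1}{p}\Bigr)\sum_{j\ge 1}\frac{f(p^j)}{p^j},
\]
which equals $1$ precisely when $f(p^j)=1$ for every $j\ge 1$. The structural core of the argument is to translate $(\star)$, together with $\sum_p(1-f(p))/p<\infty$, into rigid information on $f$: namely that the \emph{exceptional set} $P_0:=\{p:\exists\,k\ge 1,\ f(p^k)\ne 1\}$ is \emph{finite}, and that for each $p\in P_0$ the sequence $(f(p^k))_{k\ge 0}$ is eventually periodic in $k$. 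The heuristic is that an infinite $P_0$ would spread the non-trivial contributions to $M(h)$ across too many independent coprimality conditions, forcing $\prod_p M_p(f,f;h)$ to collapse on a positive density of $h$ and contradicting the linear-in-$H$ negativity in $(\star)$; similarly a non-eventually-periodic sequence $(f(p^k))_k$ for $p\in P_0$ would inject non-periodic behaviour at scale $p^k$ inconsistent with $(\star)$.

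Once $P_0$ is finite and $(f(p^k))_k$ is eventually periodic for each $p\in P_0$, multiplicativity forces $f(n)$ to depend only on $n\bmod m$ for a suitable $m$ (built from the relevant prime powers and eventual periods), so $f$ is periodic of period $m$. Periodicity together with the boundedness of $S(N)$ then yields $\sum_{n=1}^m f(n)=0$, since a non-zero period-sum would produce linear growth. The principal obstacle is precisely the structural translation step converting the averaged correlation bound $(\star)$ into the finiteness of $P_0$ and the eventual periodicity of each $(f(p^k))_k$; this quantitative analysis of the local-factor formula from Theorem~\ref{charactercor} is where the main technical work lies.
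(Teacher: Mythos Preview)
Your setup is correct and matches the paper: the forward direction is trivial, and for the converse you invoke Tao's result to obtain $f(2^j)=-1$ and $\mathbb{D}(f,1;\infty)<\infty$, then expand the second moment $\frac{1}{x}\sum_{n\le x}\bigl(\sum_{k=n+1}^{n+H}f(k)\bigr)^2$ in terms of the correlations $M(h)$. Your computation of $M_2(f,f;h)$ is also correct.

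However, the proposal has a genuine gap at exactly the place you yourself flag as the ``principal obstacle'': you do not actually carry out the passage from the averaged inequality $(\star)$ to the structural conclusion about $f$. The heuristic you offer (``an infinite $P_0$ would spread the non-trivial contributions \ldots\ forcing $\prod_p M_p$ to collapse on a positive density of $h$'') is not a proof, and in fact the paper's argument does not proceed this way. The paper rewrites the second moment in the closed form
\[
-2\sum_{a\ge 1,\ a\text{ odd}}G(a)\Bigl\|\tfrac{H}{2a}\Bigr\|,
\]
where $G(a)$ is an explicit Euler product built from $g=f*\mu$, and then exploits two concrete facts: first, that $|G(a)|\gg 1$ whenever $G(a)\ne 0$ (a uniform lower bound on nonzero Euler factors); second, that the $G(a)$ have a controllable sign (all $\le 0$ when $f(3)=1$, and a recursive bootstrapping in the $3$-factor when $f(3)=-1$). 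The key trick is then to choose $H$ as the least common multiple of a long finite list of odd $a$ with $G(a)\ne 0$, so that $\|H/(2a)\|=\tfrac12$ for each, making the sum grow without bound and contradicting the boundedness. None of this structure is present in your outline, and the vague density heuristic cannot substitute for it.

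There is also a second, independent error: the structural target you aim for is wrong. You assert that it suffices to show $(f(p^k))_k$ is \emph{eventually periodic} for each $p\in P_0$, and that this together with $P_0$ finite forces $f$ to be periodic modulo some $m$. This is false: eventual periodicity in $k$ (e.g.\ $f(p^k)=(-1)^k$) does \emph{not} make $f$ a function of $n\bmod m$. What is required, and what the paper proves via Proposition~\ref{equiv}, is that $f(p^k)=f(p^{k-1})$ for all sufficiently large prime powers $p^k$, i.e.\ eventual \emph{constancy}. This is exactly the condition $g(p^k)=0$ for all but finitely many prime powers, which is what the closed-form argument above delivers.
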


There are examples known  with bounded sums, such as the multiplicative function $f$ for which 
$f(n)=+1$ when $n$ is odd and $f(n)=-1$ when $n$ is even.  One can easily show $f$ satisfies the above hypotheses if and only if $m$ is even, $f(2^k)=-1$ for all $k\geq 1$, and  $f(p^k)=f((p^k,m))$ for all odd prime powers $p^k.$ In particular if $p$ does not divide $m$ then $f(p^k)=1.$ 

It would be interesting to classify all complex valued multiplicative $f:\mathbb{N}\to\mathbb{T}$ for which~\eqref{boundedsum} holds. Using Theorem~\ref{charactercor} it easy to prove
\begin{theorem}\label{complexcor}
Suppose for a multiplicative $f:\mathbb{N}\to\mathbb{T},$~\eqref{boundedsum1} holds. Then there exists a primitive character $\chi$ of an odd conductor $q$ and $t\in\mathbb{R},$ such that $\mathbb{D}(f(n),\chi(n)n^{it};\infty)<\infty$ and $f(2^k)=-\chi^k(2)2^{-ikt}$ for all $k\ge 1.$
\end{theorem}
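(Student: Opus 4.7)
The plan is to extract the algebraic structure of $f$ from the correlation identity forced by bounded partial sums and then read off the required conclusions from the Euler-product formula of Theorem~\ref{charactercor}. First, I would write $S(x)=\sum_{n\le x}f(n)$ and expand, exactly as sketched in the Introduction,
\[
\frac{1}{x}\sum_{n\le x}\bigl|S(n+H+1)-S(n)\bigr|^{2}=\sum_{|h|\le H}(H+1-|h|)\cdot\frac{1}{x}\sum_{n\le x}f(n)\overline{f(n+h)}+O\bigl(H^{2}/x\bigr).
\]
Since $S$ is bounded by hypothesis and $|f|\equiv 1$, the left-hand side is $O(1)$ while the $h=0$ term on the right equals $H+1$. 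Letting $x\to\infty$ yields
\[
\sum_{1\le|h|\le H}(H+1-|h|)\,C_{h}=-(H+1)+O(1),\qquad C_{h}:=\lim_{x\to\infty}\frac{1}{x}\sum_{n\le x}f(n)\overline{f(n+h)}.
\]

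Next, I would use the non-vanishing of the $C_{h}$ to force $f$ to be pretentious: by the logarithmic Elliott conjecture proved in Tao~\cite{MR4}, a non-zero correlation $C_{h}$ requires the existence of a primitive character $\chi$ of some conductor $q$ and a $t\in\R$ with $\mathbb{D}(f,\chi(n)n^{it};\infty)<\infty$. With this pretension in hand Theorem~\ref{charactercor} applies and delivers
\[
C_{d}=\prod_{p\nmid q}M_{p}(F,\overline{F};d)\prod_{p^{\ell}\|q}M_{p^{\ell}}(f,\overline{f};d),
\]
with $F$ as defined there. If $2\mid q$ with $2^{\ell}\|q$ for some $\ell\ge 1$, the tabulated formula for $M_{2^{\ell}}(f,\overline{f};d)$ makes $C_{d}$ vanish unless $2^{\ell-1}\mid d$; only a fraction $2^{1-\ell}$ of shifts $|h|\le H$ survives, and on these shifts every remaining local factor is bounded by $1$ in modulus. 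This is incompatible with the identity $\sum_{1\le|h|\le H}(H+1-|h|)C_{h}=-(H+1)+O(1)$ unless $\ell=0$, so $q$ must be odd.

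With $q$ odd the Euler factor at $2$ in the product for $C_{d}$ is $M_{2}(F,\overline{F};d)$, and depends only on the sequence $F(2^{k})=f(2^{k})\overline{\chi(2)^{k}}\,2^{-ikt}\in\mathbb{T}$. Viewing $(C_{h})_{h\in\mathbb{Z}}$ as the Fourier coefficients of a non-negative spectral measure (Bochner), the Fej\'er-weighted identity from the first step evaluates that measure at a specific point and forces the $2$-adic factor to be extremally negative. Inspecting the explicit formula for $M_{2}(F,\overline{F};d)$ then shows that this extremum is attained precisely when $F(2^{k})=-1$ for every $k\ge 1$, equivalently
\[
f(2^{k})=-\chi(2)^{k}\,2^{-ikt}\qquad(k\ge 1).
\]
The main obstacle is exactly this last step: converting the extremal behaviour of the Fej\'er-averaged correlations into the pointwise identity $F(2^{k})=-1$. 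In the $\{-1,+1\}$-valued case this is essentially Tao's input to Theorem~\ref{tao}, and the real work here is to carry that analysis through in the general pretentious setting, where the phase $\chi(2)^{k}2^{-ikt}$ must be tracked simultaneously with the sign of $f(2^{k})$.
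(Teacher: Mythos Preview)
Your overall architecture matches the paper's: expand the second moment of short sums of $f$, deduce pretentiousness from Tao's correlation theorem (this is Lemma~\ref{generaltao}), and then feed the resulting $\chi,t$ into Theorem~\ref{charactercor}. But the two decisive deductions at the end are not carried by your argument.

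Your claim that $q$ must be odd does not follow from what you wrote. When $2^{\ell}\|q$, the vanishing constraint $2^{\ell-1}\mid h$ together with the trivial bound $|C_h|\le 1$ only gives
\[
\Bigl|\sum_{1\le|h|\le H}(H+1-|h|)C_h\Bigr|\le 2\sum_{1\le m\le H/2^{\ell-1}}(H+1-2^{\ell-1}m)\asymp \frac{H^2}{2^{\ell-1}},
\]
which is entirely compatible with the required value $-(H+1)+O(1)$ for any fixed $\ell$; and for $\ell=1$ there is no restriction at all. So nothing is excluded. The Bochner heuristic in your last paragraph points in the right direction (the Fej\'er sum being $O(1)$ forces the spectral atom at $0$ to vanish), but you never compute that atom, and this is exactly where the work lies.

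The paper does not separate ``$q$ odd'' from ``$F(2^k)=-1$''. After writing $S_d=\prod_{p\nmid q}M_p(F,\overline F;d)\prod_{p^{\ell}\|q}M_{p^{\ell}}(f,\overline f;d)$ via Theorem~\ref{charactercor}, it averages over $d$: since the Fej\'er sum is $O(1)$ one gets $\lim_{N\to\infty}N^{-1}\sum_{d\le N}S_d=0$. This limit factorises as $\prod_p S(p)$ with
\[
S(p)=\tfrac{1}{p^{\ell-1}}\Bigl(1-\tfrac1p\Bigr)^2>0\quad(p^{\ell}\|q),\qquad S(p)=\Bigl|\Bigl(1-\tfrac1p\Bigr)\sum_{k\ge 0}\frac{F(p^k)}{p^k}\Bigr|^2\quad(p\nmid q).
\]
Hence some factor with $p\nmid q$ vanishes; for $p\ge 3$ the triangle inequality gives $\bigl|\sum_{k\ge 0}F(p^k)p^{-k}\bigr|\ge 1-\tfrac{1}{p-1}>0$, so necessarily $p=2$, which yields simultaneously $2\nmid q$ and $\sum_{k\ge 0}F(2^k)2^{-k}=0$, i.e.\ $F(2^k)=-1$ for all $k\ge 1$. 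This single Euler-product computation replaces both of your last two steps.
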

 \bigskip

\noindent \textbf{1.5.\ Distribution of $(f(n),f(n+1))$}.  
Let $f:\mathbb{N}\to\mathbb{C}$ be a multiplicative function
and $\triangle f(n)=f(n+1)-f(n).$ K\'atai conjectured and Wirsing proved (first in a letter to K\'atai, and then in a joint paper with Tan and Shao~\cite{MR1373561}) that if a unimodular multiplicative function $f$ satisfies 
$\triangle f(n)\to 0$ then $f(n)=n^{it}$ (see also a nice paper of Wirsing and Zagier~\cite{MR1864627} for a simpler proof). One would naturally expect that if $\triangle f(n)\to 0$ in some averaged sense, than the similar conclusion must hold. K\'atai~\cite{MR729295} made the following conjecture which we prove in Section $5:$
\begin{theorem}\label{introkatai}{\bf [K\'atai's Conjecture, 1983]}
If $f:\mathbb{N}\to\mathbb{C}$ is a multiplicative function and
\begin{align*}
\lim_{x\to\infty}\frac{1}{x}\sum_{n\le x}|\triangle f(n)|=0
\end{align*}
then either 
\[\lim_{x\to\infty}\frac{1}{x}\sum_{n\le x}|f(n)|=0\] 
or
$f(n)=n^{s}$ for some $\operatorname{Re}(s)<1.$
\end{theorem}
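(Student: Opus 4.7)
The plan is to convert the $L^1$-smallness of $\triangle f$ into positivity of the correlation $\frac{1}{x}\sum f(n)\overline{f(n+1)}$ via Cauchy--Schwarz, invoke Tao's correlation theorem to force $f$ to be pretentious, and then read off the pretentious parameters using Theorem~\ref{charactercor}. First I would reduce to the case $|f|\le 1$: the non-negative multiplicative function $F(n):=|f(n)|$ inherits the hypothesis since $\bigl||f(n+1)|-|f(n)|\bigr|\le|\triangle f(n)|$, so (by first establishing the theorem for non-negative $F$, essentially the same argument with obvious simplifications) either $F$ has vanishing mean and we are done, or $F(n)=n^\sigma$ for some $\sigma\in[0,1)$. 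In the latter case, writing $f(n)=n^\sigma g(n)$ with $|g|\equiv 1$, the identity
\[g(n+1)-g(n)=\frac{\triangle f(n)}{(n+1)^\sigma}+f(n)\bigl((n+1)^{-\sigma}-n^{-\sigma}\bigr)\]
yields $\tfrac{1}{x}\sum_{n\le x}|\triangle g(n)|\le\tfrac{1}{x}\sum_{n\le x}|\triangle f(n)|+O(\log x/x)=o(1)$, so we may assume $|f|\le 1$.

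Under $|f|\le 1$, the trivial bound $|\triangle f(n)|^2\le 2|\triangle f(n)|$ gives $\tfrac{1}{x}\sum_{n\le x}|\triangle f(n)|^2=o(1)$, and expanding the square yields
\[\frac{1}{x}\sum_{n\le x}\operatorname{Re}\bigl(f(n+1)\overline{f(n)}\bigr)=\frac{1}{x}\sum_{n\le x}|f(n)|^2+o(1).\]
If $\tfrac{1}{x}\sum_{n\le x}|f(n)|$ does not go to zero then neither does the right-hand side, so a subsequential limit of $\tfrac{1}{x}\sum_{n\le x} f(n)\overline{f(n+1)}$ is bounded below. Tao's correlation theorem for pretentious functions (cf.\ the discussion preceding Theorem~\ref{charactercor}) then produces a primitive Dirichlet character $\chi$ of conductor $q$ and some $t\in\mathbb{R}$ with $\mathbb{D}(f(n),\chi(n)n^{it};\infty)<\infty$.

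With pretentiousness in hand, I apply Theorem~\ref{charactercor} with $d=1$. The factor $M_{p^\ell}(f,\overline f,1)$ at $p^\ell\,\|\,q$ vanishes when $\ell\ge 2$ (since $p\nmid 1$), so $q$ is forced to be squarefree, and for $p\mid q$ the factor equals $1-1/p$. A prime-by-prime computation shows that the local factor of $\tfrac{1}{x}\sum f(n)\overline{f(n+1)}$ is bounded above by the Hal{\'a}sz--Wirsing local factor of $\tfrac{1}{x}\sum|f(n)|^2$: at $p\nmid q$, writing $F(p^j)=f(p^j)\overline{\chi(p^j)}p^{-ijt}$, this reduces to the pointwise inequality $2\operatorname{Re}(F(p^j))\le 1+|F(p^j)|^2$; at $p\mid q$ it follows from $M_p(|f|^2)\ge 1-1/p$. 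Since the two global mean values agree up to $o(1)$, equality must hold locally at every prime, which forces $f(p^k)=\chi(p^k)p^{ikt}$ for $p\nmid q$ and $f(p^k)=0$ for $p\mid q$, $k\ge 1$. By multiplicativity, $f(n)=\chi(n)n^{it}$ identically.

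Finally, if $\chi$ were non-trivial then on a positive density of $n$ one has $\chi(n+1)\ne\chi(n)$, and a short direct computation shows $\tfrac{1}{x}\sum_{n\le x}|\triangle(\chi(n)n^{it})|\gg_q 1$, contradicting the hypothesis. Hence $q=1$ and $f(n)=n^{it}$; combining with the Step~1 scaling $f(n)=n^\sigma g(n)$ gives $f(n)=n^s$ with $\operatorname{Re}(s)=\sigma<1$. The main obstacle is the local-factor comparison in the third paragraph: verifying the Euler-factor inequality pointwise and then using positivity of the Hal{\'a}sz--Wirsing factors to deduce that equality of the global products forces equality at every prime. A secondary subtlety is the unbounded reduction, which requires handling the non-negative case of the theorem first, either by rerunning the same argument in that setting or by a preliminary Halász-type bound showing $F(n)\ll n^\sigma$ for some $\sigma<1$ before scaling down to $|g|=1$.
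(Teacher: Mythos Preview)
Your overall strategy matches the paper's: reduce to the unimodular case, use the vanishing of $\frac{1}{x}\sum|\triangle f(n)|$ to force the shifted correlation $\sum f(n)\overline{f(n+1)}$ to be large, invoke Tao's logarithmically averaged Elliott conjecture to obtain pretentiousness, and then use the explicit correlation formula (Theorem~\ref{charactercor}/Corollary~\ref{keytotao}) to pin down $f$. However, there are two genuine gaps in your execution.

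\textbf{The reduction step is circular.} You propose to handle $F=|f|$ first ``by essentially the same argument,'' but your argument from Step~2 onward requires $|f|\le 1$, while $F$ may be unbounded. You acknowledge this at the end and suggest a Hal\'asz-type preliminary bound, but neither option is carried out. The paper resolves this by citing K\'atai's earlier work (building on Mauclaire--Murata) which reduces directly to $|f|\equiv 1$, not merely $|f|\le 1$. This stronger reduction is what you should invoke; it also simplifies the endgame substantially, since then $\frac{1}{x}\sum|f(n)|^2=1$ identically.

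\textbf{The Euler-product comparison is incomplete.} You want to conclude from $\prod_p A_p=\prod_p B_p$ with $A_p\le B_p$ that $A_p=B_p$ for every $p$. This inference requires all the $A_p$ to be positive, which you do not verify; the local factors $2\operatorname{Re}\bigl((1-\tfrac1p)\sum_k F(p^k)p^{-k}\bigr)-1$ can be negative (e.g.\ if $F(2^k)=-1$ for all $k$). The paper avoids this by working with $|f|=1$: then the target value is exactly $1$, each Euler factor is $\le 1$, and the constraint $E(f)=\frac{\mu(q)}{q}\prod_{p\nmid q}(\cdot)=1$ immediately forces $q=1$ and every factor equal to $1$, hence $f(p^k)=p^{ikt}$. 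You would save effort by adopting the $|f|=1$ reduction and this direct argument, rather than comparing two Euler products and then separately ruling out nontrivial $\chi$.

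A minor point: Tao's theorem as stated in the paper (Theorem~\ref{averagedelliot}) is for logarithmic averages, so you should pass from the Ces\`aro statement $\frac{1}{x}\sum\operatorname{Re}(f(n)\overline{f(n+1)})=\frac{1}{x}\sum|f(n)|^2+o(1)$ to its logarithmic analogue before invoking it. Once $|f|=1$ this is immediate by partial summation; with $|f|\le 1$ you would additionally need Wirsing's theorem to know that $\frac{1}{x}\sum|f(n)|^2$ actually converges.
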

Since $f(n)=\text{e}^{h(n)}$ is multiplicative, where $h(n):\mathbb{N}\to\mathbb{R}$ is an additive function, one may compare Theorem~\ref{introkatai} with the following statement about additive functions, first conjectured by Erd\H{o}s~\cite{MR0015424}  and proved later by K\'atai~\cite{MR0250991} (and independently by Wirsing):
if $h:\mathbb{N}\to\mathbb{C}$ is an additive function and
\[\lim_{x\to \infty}\frac{1}{x}\sum_{n\le x}|h(n+1)-h(n)|=0,\]
then $h(n)=c\log n.$

The conjecture attracted considerable attention of several authors including K\'atai, Hildebrand, Phong and others. See, for example~\cite{MR932664}, \cite{MR3158863},\cite {MR1798718},\cite{MR1153489} for some of the results and the survey paper~\cite{MR1764803} with an extensive list of the related references.

 \bigskip

\noindent \textbf{1.6.\ Binary additive problems}.   A sequence $A$ of positive integers is called multiplicative, if its characteristic function, $1_A$, is multiplicative. We define
\[\rho_A(d)=\lim_{x\to\infty}\frac{1}{x/d}\sum_{k\le x/d}\text{I}_A(kd),\]
with $\rho_A=\rho_A(1)$, which is the density of $A$. Note that these constants all exist by Wirsing's Theorem.

Binary additive problems, which involve estimating quantities like
\[
r(n)=|\{(a,b)\in A\times B:\ a+b=n\}|
\] 
are considered difficult. However, using a variant of circle method Br{\"u}dern~\cite{MR2508639}, among other things, established the following theorem, which we will deduce from Theorem~\ref{intromain} in section 6.

\begin{theorem}\label{brudern}{\bf{[Br{\"u}dern, 2008]}}
Suppose $A$ and $B$ are multiplicative sequences of positive density $\rho_A$ and $\rho_B$ respectively. For $k\ge 1,$ let 
\[a(p^k)=\rho_A(p^k)/p^k-\rho_A(p^{k-1})/p^{k-1}\]
Define $b(p^k)$ in the same fashion. Then, $$r(n)=\rho_A\rho_B\sigma(n)n+o(n)$$
where
\[\sigma(n)=\prod_{p^m\vert| n}\left(1+\sum_{k=1}^m\frac{p^{k-1}a(p^k)b(p^k)}{p-1}-\frac{p^{m}a(p^{m+1})b(p^{m+1})}{(p-1)^2}\right)\cdot\]
\end{theorem}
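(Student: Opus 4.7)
The strategy is to reinterpret $r(n)$ as a two-point correlation and apply Theorem~\ref{intromain} (in fact Corollary~\ref{introlinear}), then identify the resulting Euler product with Br\"udern's singular series. Writing
\[r(n) = \sum_{m=1}^{n-1} 1_A(m)\,1_B(n-m),\]
this has the form $\sum_{m\le x}f(P(m))g(Q(m))$ with $f=1_A$, $g=1_B$, $P(m)=m$, $Q(m)=-m+n$ and $x=n$. The resultant $\text{res}(P,Q)=n$ is non-zero. Positive density of $A$ and $B$ forces $\mathbb{D}(1,1_A;\infty),\mathbb{D}(1,1_B;\infty)<\infty$ (by Wirsing's theorem and the asymptotic $1-M_p(1_A)\sim(1-1_A(p))/p$). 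Applying Corollary~\ref{introlinear} with $a=1,c=0,b=-1,d=n$, $t=u=0$ (and checking that the proof is uniform in the coefficient $n$ of $Q$) one obtains
\[r(n) = n\prod_{p\le n}M_p(1_A(P),1_B(Q)) + o(n).\]

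Next I would organize the Euler product by setting $\rho_A=\prod_p M_p(1_A)$ and $\rho_B=\prod_p M_p(1_B)$, and writing
\[\prod_p M_p(1_A(P),1_B(Q)) = \rho_A\rho_B\prod_p \frac{M_p(1_A,1_B)}{M_p(1_A)M_p(1_B)}.\]
For $p\nmid n$, Corollary~\ref{introlinear} gives $M_p(1_A,1_B)=M_p(1_A)+M_p(1_B)-1$, a clean expression. For $p^m\|n$ with $m\ge 1$ one invokes the general double-sum formula in terms of the Dirichlet inverses $\theta,\gamma$ defined by $1_A=1*\theta$ and $1_B=1*\gamma$.

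The final step is to match these local factors with those appearing in $\sigma(n)$. A direct computation of $\rho_A(p^j)$ using the multiplicativity of $1_A$ gives
\[a(p^k) = -\frac{\rho_A}{M_p(1_A)}\Bigl(1-\tfrac{1}{p}\Bigr)\frac{1_A(p^{k-1})}{p^{k-1}}\qquad (k\ge 1),\]
and an analogous expression for $b(p^k)$. Substituting these into Br\"udern's bracketed factor at $p^m\|n$ and pulling out the common scaling $\rho_A\rho_B/(M_p(1_A)M_p(1_B))$ reduces the theorem to a purely algebraic identity between two rational expressions in $1_A(p^k),1_B(p^k)$.

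The principal obstacle is this last algebraic identification at primes $p\mid n$. Corollary~\ref{introlinear} expresses the local factor as a double sum indexed by pairs $(i,j)$ with $0\le i\le k$ and $j>i$, whereas Br\"udern's factor is a single sum $\sum_{k=1}^m$ together with a boundary term at $k=m+1$. Reconciling them will require a careful re-summation using $\theta(p^k)=1_A(p^k)-1_A(p^{k-1})$ (and likewise for $\gamma$) and an Abel-type summation in $k$; the boundary term in Br\"udern's formula should emerge from the tail truncation when one passes from the infinite inner sum to a finite one. A useful intermediate step is to first verify the identity in the diagonal case $A=B$ and then extend by bilinearity in $(1_A,1_B)$.
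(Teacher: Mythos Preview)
Your proposal is correct and follows essentially the same route as the paper: write $r(n)$ as the shifted correlation $\sum_{m\le n}1_A(m)1_B(n-m)$, invoke positive density to get $\mathbb{D}(1,1_A;\infty),\mathbb{D}(1,1_B;\infty)<\infty$, apply Corollary~\ref{introlinear} after noting that its proof is uniform in the shift parameter (the paper points specifically to the bound~\eqref{error}), and then identify the resulting Euler product with Br\"udern's $\rho_A\rho_B\sigma(n)$ by an algebraic computation with the local factors. The paper's own argument is only a sketch and dismisses the last step as ``a straightforward manipulation with Euler factors,'' so your more explicit plan for the $p\mid n$ identification via $\theta(p^k)=1_A(p^k)-1_A(p^{k-1})$ and Abel summation is, if anything, more detailed than what the paper provides; one cosmetic point is that Corollary~\ref{introlinear} is stated for $b\ge 1$, so rather than taking $b=-1$ you should either reindex via $m\mapsto n-m$ or, as the paper does, absorb the dependence on $n$ into the coefficients and appeal directly to the uniformity of the error term.
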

{\bf Acknowledgement.} I would like to thank Andrew Granville for all his support and encouragement as well as many valuable comments and suggestions. The research leading to the results of this paper received funding from the NSERC grant and the ISM doctoral award. 
\end{section}

\begin{section}{Multiplicative functions of polynomials}
 For any given polynomial $P(x)\in\mathbb{Z}[x]$ we define $\omega_P(p^k)$ to be the number of solutions of $P(x)=0(\text{mod} (p^k)).$ Clearly, $\omega_P(p^k)\le \deg P$ for all but finitely many primes $p.$
 We begin by showing that the mean value of $f(P(n))$ in general significantly depends on the large primes. We restrict ourselves to the case $P(x)=x^2+1$ but the same arguments work for all polynomials $P(x)\in\mathbb{Z}[x]$ that are not product of linear factors.
 
 \begin{lemma}\label{largeprimes}
Let $P(x)=x^2+1.$ For any $x\ge 2,$ and any complex numbers $g(p^k)\in\mathbb{T},$ $p\le 2x,$ $k\ge 1,$ there exists a multiplicative function $f:\mathbb{N}\to\mathbb{T}$ such that $f(p^k)=g(p^k)$ for all $p\le 2x$ and 
\[\left|\frac{1}{x}\sum_{n\le x}f(P(n))\right|\ge \frac{1}{2}+o(1).\]
\end{lemma}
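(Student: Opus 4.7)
The plan is to exploit the freedom to prescribe $f$ at primes $p>2x$, together with two structural facts about $P(x)=x^2+1$ on $[1,x]$. First, any prime $p>2x$ dividing $n^2+1$ for some $n\le x$ does so to the first power only, since $p^2>4x^2\ge n^2+1$; and it divides $n^2+1$ for at most one such $n$, since if $p\mid n^2+1$ and $p\mid n'^2+1$ with $1\le n\ne n'\le x$, then $p\mid(n-n')(n+n')$ contradicts $|n-n'|,\,n+n'<2x<p$. Second, the hypothesis fixes $g(p^k)$ only for $p\le 2x$, leaving complete freedom at the large primes.

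Let $N_1$ denote the set of $n\le x$ such that $n^2+1$ has a (necessarily unique) prime factor $p_n>2x$, and write $n^2+1=m_np_n$. For $n\in N_1$ I will set $f(p_n):=e^{i\alpha_n}\overline{g(m_n)}$, where $g(m_n):=\prod_{q^k\| m_n}g(q^k)$ and $\alpha_n\in\mathbb{R}$ is a free parameter; at every other prime $p>2x$ set $f(p)=1$, extending completely multiplicatively. Because the $p_n$ are pairwise distinct the choices are independent, so $f(n^2+1)=e^{i\alpha_n}$ for $n\in N_1$ while $f(n^2+1)=g(n^2+1)$ is forced for $n\notin N_1$. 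Writing $S_0:=\sum_{n\notin N_1}g(n^2+1)$ and setting every $\alpha_n$ equal to $\arg S_0$ yields
\[
\Bigl|\sum_{n\le x}f(n^2+1)\Bigr|=|S_0|+|N_1|\ge |N_1|.
\]

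What remains is the lower bound $|N_1|\ge x/2+o(x)$. I plan to evaluate $\sum_{n\le x}\log(n^2+1)$ in two ways: by Stirling it equals $2x\log x+O(x)$, while expanding $\log(n^2+1)=\sum_{p^k\| n^2+1}k\log p$ and swapping sums gives $\sum_p\log p\sum_{n\le x}v_p(n^2+1)$. Using the count $\#\{n\le x:p^k\mid n^2+1\}=\omega_P(p^k)x/p^k+O(\omega_P(p^k))$, Hensel's lemma (which forces $\omega_P(p^k)=\omega_P(p)\in\{0,2\}$ for odd $p$), and the prime number theorem in arithmetic progressions modulo $4$, the contribution from $p\le 2x$ becomes
\[
\sum_{\substack{p\le 2x\\ p\equiv 1\,(4)}}\frac{2x\log p}{p-1}+O(x)\;=\;x\log x+O(x).
\]
The contribution from $p>2x$ must therefore also equal $x\log x+O(x)$; but by the first paragraph each such $p$ contributes a single $\log p\le\log(x^2+1)\le 2\log x+O(1)$ and corresponds bijectively to an element of $N_1$. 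Hence $2|N_1|\log x\ge x\log x+O(x)$, so $|N_1|\ge x/2+o(x)$, which combined with the inequality above completes the proof.

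The main obstacle will be the careful bookkeeping of the errors in the third paragraph: the per-prime $O(\omega_P(p^k))$ terms must be summed over both $p\le 2x$ and $1\le k\le\log(x^2+1)/\log p$ without exceeding $O(x)$; the boundary case $p=2$ (where $\omega_P(4)=0$) has to be treated separately; and the error in PNT for arithmetic progressions must fit inside the $O(x)$ slack from Stirling. Once that bookkeeping is in place, the structural mechanism—that the essentially unique large prime factor of $n^2+1$ offers a free unimodular parameter which realigns $f(n^2+1)$ for every $n\in N_1$—directly upgrades the size of $N_1$ into the desired mean-value bound.
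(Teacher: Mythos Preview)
Your proposal is correct and follows essentially the same strategy as the paper: isolate those $n\le x$ for which $n^2+1$ has a (necessarily unique, necessarily simple) prime factor $p_n>2x$, use the free choice of $f(p_n)$ to align all such terms with the remaining sum, and bound the size of this set from below via $\sum_{n\le x}\log(n^2+1)=2x\log x+O(x)$ against the small-prime contribution $x\log x+O(x)$. Your bookkeeping (Hensel, the $p=2$ case, the $O(\omega_P(p^k))$ errors summing to $O(x)$) is in fact more explicit than the paper's, which absorbs these into an unlabelled $O(x)$.
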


\begin{proof}
Let $$\textfrak{M}(x) = \{ n_p\le x \mid \exists p\in N_P(x), p\vert P(n_p) \}.$$
We note that for each $p\ge 2x,$ there exists at most one element $n_p\in\textfrak{M}(x)$ such that $p\vert P(n_p)$ and moreover all prime factors of $P(n_p)/p$ are smaller than $x.$
We have
\begin{align*}
2x\log x+O(x)&=\sum_{n\le x}\log P(n)=\sum_{n\le x}\sum_{d\vert P(n)}\Lambda(d)\\&\le 2\sum_{\substack{p\le x,\\ p=1\ \text{mod} (4)}}\log p\cdot\frac{x}{p}+\sum_{\substack{p>2x,\\ p|P(n_p),\\ n_p\le x}}\log p+O(x)\\&\le x\log x+2\log x\cdot |\textfrak{M}(x)|+O(x)
\end{align*}
and therefore
$$|\textfrak{M}(x)|\ge x\left(\frac{1}{2}+o(1)\right).$$

Consider the multiplicative function $f$ defined as follows: $f(p^k)=g(p^k)$ for all primes $p\le 2x$ and  
$$f(p)=e^{i\phi}\overline{f\left(\frac{P(n_p)}{p}\right)}$$ if $p> 2x$ and there exists $n_p\in \textfrak{M}(x)$ such that $p\vert P(n_p),$
where $$\phi=\text{arg}\left(\sum_{\substack{n\in\overline{\textfrak{M}(x)}\\ n\le x}}f(P(n))\right).$$ Define $f(p^k)=1$ for all other primes and all $k\ge 1.$
 Clearly,
\begin{align*}
\sum_{n\le x}f(P(n))=\sum_{\substack{n\in\overline{\textfrak{M}(x)}\\ n\le x}}f(P(n))+\sum_{n_p\in \textfrak{M}(x)}f(P(n_p))=\sum_{\substack{n\in\overline{\textfrak{M}(x)},\\ n\le x}}f(P(n))+e^{i\phi}|\textfrak{M}(x)|.\end{align*}
Selecting $\phi$ so that the two sums point in the same direction, we deduce that
\[\left|\frac{1}{x}\sum_{n\le x}f(P(n))\right|\ge \frac{|\textfrak{M}(x)|}{x}\ge \frac{1}{2}+o(1).\]
\end{proof}

\begin{propn}[\ref{dependence}]
There exists a multiplicative function $f:\mathbb{N}\to [-1,1]$ such that $\mathbb{D}(1,f;x)=2\log\log x+O(1)$ for all $x\ge 2$ and 
\[\limsup_{x\to\infty}\left|\frac{1}{x}\sum_{n\le x}f(n^2+1)\right|\ge \frac{1}{2}+o(1).\]
\end{propn}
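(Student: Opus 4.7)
The plan is to iterate the construction from Lemma~\ref{largeprimes} along a rapidly growing sequence $x_1<x_2<\cdots\to\infty$, producing a single multiplicative $f:\mathbb{N}\to\{-1,+1\}\subset[-1,1]$ that correlates with $+1$ on $P(n)$ at every scale $x_k$, while being $-1$ on nearly every prime.

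Choose $x_{k+1}$ far exceeding $x_k^2$ (e.g.\ $x_{k+1}=\exp\exp(x_k)$), so that the working intervals $I_k:=(2x_k,\,x_k^2+1]$ are pairwise disjoint and widely separated. Define $f$ stage by stage, maintaining the invariant that $f(p)$ has been assigned for every prime $p\le x_k^2+1$ at the end of stage $k$, and extend completely multiplicatively by $f(p^j)=f(p)^j$. At stage $k$, for each prime $p\in I_k$ with $p\mid P(n_p)$ for some $n_p\le x_k$ (necessarily unique since $p>2x_k$ forces the two roots of $P\pmod{p}$ to lie on opposite sides of $p/2$), set
\[
f(p)=\sigma_k\,f\bigl(P(n_p)/p\bigr),
\]
where $\sigma_k\in\{\pm1\}$ will be fixed below; every prime factor of $P(n_p)/p$ is $<p$, so $f(P(n_p)/p)$ is already defined. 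For every other prime $p\le x_k^2+1$ not yet assigned, put $f(p)=-1$.

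Exactly as in the proof of Lemma~\ref{largeprimes}, for $n=n_p\in\mathfrak{M}(x_k)$ one has $f(P(n))=f(p)f(P(n)/p)=\sigma_k f(P(n)/p)^2=\sigma_k$, whence
\[
\sum_{n\le x_k}f(P(n))=\sigma_k|\mathfrak{M}(x_k)|+S_k,\qquad S_k:=\sum_{n\in\overline{\mathfrak{M}(x_k)}}f(P(n)).
\]
Choosing $\sigma_k=\operatorname{sign}(S_k)$ yields $|x_k^{-1}\sum_{n\le x_k}f(P(n))|\ge|\mathfrak{M}(x_k)|/x_k\ge\tfrac12+o(1)$, which at $x=x_k\to\infty$ delivers the $\limsup$ assertion. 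For the distance, since by construction $f(p)=-1$ on every prime except the ``stage-selected'' primes $p\in I_k$ with $f(P(n_p)/p)=\sigma_k$,
\[
\mathbb{D}(1,f;x)^2=2\sum_{p\le x}\frac1p-2\!\!\!\sum_{\substack{p\le x\\f(p)=+1}}\!\!\!\frac1p,
\]
so the distance claim reduces to bounding the defect $D(x):=\sum_{p\le x,\,f(p)=+1}1/p$ uniformly.

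The main obstacle is precisely this uniform control of $D(x)$: the sign $\sigma_k$ is forced by the need to align with $S_k$ for the mean-value bound, and hence cannot simultaneously be chosen to minimize the per-stage defect, while a naive Mertens estimate only yields $\sum_{p\in I_k}1/p\asymp 1$. The sharper input is that the stage-$k$ selected primes number at most $|\mathfrak{M}(x_k)|\le x_k$, all exceeding $2x_k$, giving the trivial bound $\sum_{p\text{ selected at stage }k}1/p\le x_k\cdot(2x_k)^{-1}=\tfrac12$; combined with the extreme sparsity of $(x_k)$ and a finer analysis of the distribution of the large prime divisors $p_n$ of $P(n)$ for $n\in\mathfrak{M}(x_k)$ (most such $p_n$ are substantially larger than $2x_k$), one obtains a summable per-stage contribution, leading to the required $D(x)=O(1)$.
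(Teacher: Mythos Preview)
Your approach is essentially the paper's: iterate Lemma~\ref{largeprimes} along a sparse sequence $(x_k)$, setting $f(p)=-1$ on all primes except the ``selected'' ones in $N_P(x_k)$. The mean-value part is fine. The gap is exactly where you say it is: you have not bounded the defect $D(x)=\sum_{p\le x,\,f(p)=+1}1/p$, and your proposed fix does not work as stated.

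The trivial bound $\sum_{p\text{ selected at stage }k}1/p\le x_k\cdot(2x_k)^{-1}=\tfrac12$ is not summable in $k$, and no amount of sparsity of $(x_k)$ can rescue a per-stage contribution that is bounded away from zero. The phrase ``a finer analysis of the distribution of the large prime divisors\dots (most such $p_n$ are substantially larger than $2x_k$)'' gestures at the right idea but does not carry it out. What is missing is a one-line Mertens estimate: the selected primes at stage $k$ are at most $x_k$ in number and are \emph{distinct} primes exceeding $x_k$, so they are contained among the $x_k$ smallest primes beyond $x_k$, hence among the primes in $(x_k,\,Cx_k\log x_k]$ for some constant $C$. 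Therefore
\[
\sum_{p\text{ selected at stage }k}\frac1p\;\le\;\sum_{x_k<p\le Cx_k\log x_k}\frac1p\;=\;\log\frac{\log(Cx_k\log x_k)}{\log x_k}+O\!\left(\frac1{\log x_k}\right)\;\ll\;\frac{\log\log x_k}{\log x_k}.
\]
This is the estimate the paper uses. With the paper's choice $x_k=2^{2^k}$ one gets $(\log\log x_k)/\log x_k\asymp k/2^k$, whose sum over $k$ converges; your doubly-exponential $x_k$ works a fortiori, but is unnecessary. Once this bound is in place, $D(x)=O(1)$ and $\mathbb{D}(1,f;x)^2=2\sum_{p\le x}1/p+O(1)=2\log\log x+O(1)$ follows immediately.
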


\begin{proof}
Take the sequence $x_k=2^{{2^k}}$ for $k\ge 1$ and define completely multiplicative function $f$ inductively: $f(p)=-1$ for all  primes in $p\in (x_k,x_{k+1}]$ unless $p\in N_P(x_k)$, in which case we define the function  as in the proof of Lemma~\ref{largeprimes}. This guarantees that  for all $k\ge 1,$
 \[\left|\frac{1}{x_k}\sum_{n\le x_k}f(n^2+1)\right|\ge \frac{1}{2}+o(1).\]
 Since $N_P(x)$ contains at most $x$ elements, we have $\sum_{p\in N_P(x)} 1/p \leq \sum_{x<p\leq 2x\log x} 1/p \ll (\log\log x)/\log x$, so that 
 $\sum_{k\geq 1} \sum_{p\in N_P(x_k)} 1/p \ll \sum_{k\geq 1} k/2^k\ll 1$. Therefore
 $$
 \mathbb{D}(1,f;x)\ge \sum_{\substack{p\le x\\ p\notin \cup_{k\ge 1}N_P(x_k) }}\frac{2}{p}\ge 2\log\log x-O(1).
 $$
\end{proof}

We thus focus on the class of functions such that $f(p)$ is close to $1$ on large primes $p\ge x$ where the distance is given by  $D_P(1,f;x)$ where
\[
D_P(1,f;x)^2 \asymp \sum_{p}(1-\operatorname{Re}f(p^k))\cdot\frac{1}{x}\sum_{\substack{n\le x,\\ p^k\vert| P(n)}}1 ,
\]
which generalizes $D(1,f;x)$ where
\[
D(1,f;x)^2 \asymp D^*(1,f;x)^2 \asymp \sum_{p}(1-\operatorname{Re}f(p^k))\cdot\frac{1}{x}\sum_{\substack{n\le x,\\ p^k\vert| n}}1 
\]

 In order to prove Theorem~\ref{intromain}, we begin by proving a few auxiliary results. The following lemma is a simple consequence of the Erd\H{o}s-Kac type theorem for the polynomial sequences.
 
\begin{lemma}\label{erdoskacpol}
Let $h:\mathbb{N}\to\mathbb{C}$ be an additive function such that $h_s(p^k)=0$ for $p^k\ge x$ and $|h(p^k)|\le 2$ for all $p$ and $k\ge 1.$ Suppose $P(x)\in\mathbb{Z}[x]$ is irreducible. Define 
\[\mu_{h,P}=\sum_{p^k\le x}\frac{h(p^k)}{p^k}\left(\omega_P(p^k)-\frac{\omega_P(p^{k+1})}{p}\right)\]
and 
\[\sigma_{h,P}^2=\sum_{p^k\le x}\frac{|h(p^k)|^2}{p^k}\left(\omega_P(p^k)-\frac{\omega_P(p^{k+1})}{p}\right).\]
Then 
\begin{equation}\label{kacpol}
\sum_{n\le x}|h(P(n))-\mu_{h,P}|^2\ll x\sum_{p^k\le x}\frac{|h(p^k)|^2}{p^k}+x\frac{(\log\log x)^3}{\log x}\cdot
\end{equation}
\end{lemma}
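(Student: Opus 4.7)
This is a Tur\'an--Kubilius-type variance estimate for an additive $h$ evaluated on the polynomial sequence $P(n)$, and I would follow the classical three-step strategy: compute the mean, expand the variance as a double sum over pairs of prime powers, and exploit the Chinese Remainder Theorem so that the leading off-diagonal contribution cancels precisely against $x|\mu_{h,P}|^2$. For the mean, additivity together with the truncation $h_s(p^k)=0$ for $p^k\ge x$ gives $h(P(n))=\sum_{p^k\le x,\ p^k\|P(n)}h(p^k)$; swapping summation and using that
\begin{align*}
\#\{n\le x:p^k\|P(n)\}=\Bigl(\frac{\omega_P(p^k)}{p^k}-\frac{\omega_P(p^{k+1})}{p^{k+1}}\Bigr)x+O(\omega_P(p^k)+\omega_P(p^{k+1}))
\end{align*}
yields $\frac{1}{x}\sum_{n\le x}h(P(n))=\mu_{h,P}+O(1/\log x)$, using $|h|\le 2$, $\omega_P(p^k)=O(1)$ for $p\nmid\mathrm{disc}(P)$, and the prime-power counting bound $\sum_{p^k\le x}1\ll x/\log x$.

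\textbf{Second moment.} Writing $|h(P(n))-\mu_{h,P}|^2=|h(P(n))|^2-2\re\bigl(\overline{\mu_{h,P}}h(P(n))\bigr)+|\mu_{h,P}|^2$ and summing over $n\le x$ reduces matters to estimating
\begin{align*}
\sum_{n\le x}|h(P(n))|^2=\sum_{p^k,q^l\le x}h(p^k)\overline{h(q^l)}\#\{n\le x:\ p^k\|P(n),\ q^l\|P(n)\}.
\end{align*}
The diagonal contribution ($p=q$, forcing $k=l$) is bounded by $\ll x\sum_{p^k\le x}|h(p^k)|^2\omega_P(p^k)/p^k$, absorbed into the first term on the right of~\eqref{kacpol} because $\omega_P(p^k)=O(1)$. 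For off-diagonal pairs with $p\ne q$ and $p^kq^l\le x$, the Chinese Remainder Theorem gives $\omega_P(p^kq^l)=\omega_P(p^k)\omega_P(q^l)$, so the count factorises; a double inclusion-exclusion (once in $k$, once in $l$) reassembles the resulting main terms into exactly $x|\mu_{h,P}|^2$ up to boundary errors, which then cancels the two remaining pieces of the expansion.

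\textbf{Large prime powers and main obstacle.} The residual contribution comes from pairs $(p^k,q^l)$ with $p^kq^l>x$, where the CRT-based asymptotic is no longer applicable. For these I would use the trivial bound $\#\{n\le x:\ p^k|P(n),\ q^l|P(n)\}\le \omega_P(p^kq^l)\ll 1$ together with the Brun--Titchmarsh-type input that, for $y\ge\sqrt{x}$, the number of $n\le x$ admitting a prime-power divisor $p^k>y$ of $P(n)$ is $\ll x(\log\log x)/\log y$, valid for irreducible $P$. A double application of this estimate followed by Mertens summation over prime powers in the range $[\sqrt{x},x]$ produces a residual bound of the shape $\ll x(\log\log x)^3/\log x$, matching the second term of~\eqref{kacpol}. \emph{The main obstacle} is precisely this last step: forcing the large-prime-power count to produce exactly $(\log\log x)^3/\log x$ rather than a weaker $(\log\log x)^c$, which requires combining the sieve input for $P$ with careful bookkeeping of how many prime-power pairs $(p^k,q^l)$ with $p^kq^l>x$ can divide a common value $P(n)$ with $n\le x$.
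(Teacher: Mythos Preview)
The paper does not carry out a direct Tur\'an--Kubilius expansion: it simply quotes Proposition~4 of~\cite{MR2290492}, a ready-made variance inequality for additive functions on polynomial sequences, which delivers
\[
\sum_{n\le x}|h(P(n))-\mu_{h,P}|^2\le C_2\,x\,\sigma_{h,P}^2+O\Bigl(\max_{p^k}|h(p^k)|^2\Bigl(\sum_{p\le x}\frac{\omega_P(p)}{p}\Bigr)^{2}\sum_{\substack{d=p_1p_2\\ p_i\le x}}|r_d|\Bigr)
\]
as a black box; Mertens and the semiprime count $\ll x\log\log x/\log x$ then assemble the $(\log\log x)^3/\log x$. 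Your from-scratch route is a reasonable, more self-contained alternative, but the last step must be executed differently than you describe.

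Your ``Brun--Titchmarsh-type input'' is false as stated: for any $y\ge\sqrt x$ a \emph{positive proportion} of $n\le x$ have $P(n)$ divisible by some prime exceeding $y$ (already $\sim x\log 2$ for $P(n)=n$ and $y=\sqrt x$), so that count is $\asymp x$, not $\ll x\log\log x/\log y$. With only this and the trivial per-pair bound, the best one obtains for the $p^kq^l>x$ contribution (swapping the sums and using that each $P(n)$ has $O(1)$ prime-power divisors $>\sqrt x$) is $O(x\log\log x)$, which after the Cauchy--Schwarz step in Proposition~\ref{key} would leave an error $\asymp(\log\log x)^{1/2}$ that fails to decay. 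The clean fix is to truncate \emph{before} expanding: write $h=h_{\le}+h_{>}$ at threshold $\sqrt x$. For $h_{\le}$ every off-diagonal pair has $p^kq^l\le x$, so CRT applies throughout and the accumulated $O(1)$ remainders are bounded by the square of the number of prime powers $\le\sqrt x$, hence $\ll x/(\log x)^2$. For $h_{>}$, since $P(n)\ll x^{\deg P}$ has at most $2\deg P$ prime-power divisors exceeding $\sqrt x$, Cauchy--Schwarz gives $|h_{>}(P(n))|^2\ll\sum_{p^k\|P(n),\,p^k>\sqrt x}|h(p^k)|^2$; summing over $n$ with $\#\{n\le x:p^k\mid P(n)\}\ll x/p^k+1$ yields $\ll x\sigma_{>}^2+x/\log x$, and $|a+b|^2\le 2|a|^2+2|b|^2$ then recovers~\eqref{kacpol}.
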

\begin{proof}
By multiplicativity, we have 
\[|\{ n\le x \mid d\vert P(n) \}|=\frac{\omega_P(d)}{d}x+r_d\]
where $r_d=O(\omega_P(d)).$
Furthermore, by Proposition $4$ from~\cite{MR2290492} applied to the additive functions in place of strongly additive
\begin{align*}
\sum_{n\le x}|h(P(n))-\mu_{h,P}|^2&\le C_2x\sigma_{h,P}^2+O\left(\max_{p\le y}|h(p^k)|^2\left(\sum_{p\le x}\frac{\omega_P(p)}{p}\right)^2\sum_{\substack{d=p_1p_2,\\ p_i\le x }}|r_d|\right).
\end{align*}
 The error term is bounded by
\[\max_{p\le x}|h(p^k)|^2\left(\sum_{p\le x}\frac{\omega_P(p)}{p}\right)^2\sum_{\substack{d=p_1p_2,\\ p_i\le x }}|r_d|\ll \max_{p\le x}|h(p^k)|^2 ( \log\log x)^2\cdot\frac{x\cdot\log\log x}{\log x}\cdot\]
Combining this observation with the estimate
\[\sigma_{h,P}^2\ll \sum_{p^k\le x}\frac{|h(p^k)|^2}{p^k} \]
we conclude the proof of~\eqref{kacpol}.
\end{proof}
In what follows, we are going to focus on two-point correlations but the same method actually works for $m-$ point correlations  with mostly notational modifications.
Let
\[\mu_{h,P}=\sum_{p^k\le x}h(p^k)\left(\frac{\omega_P(p^k)}{p^k}-\frac{\omega_P(p^{k+1})}{p^{k+1}}\right)\]
and
\[\textfrak{P}(f;P;x)=\prod_{p\le x}\left(\sum_{k\ge 0}f(p^k)\left(\frac{\omega_P(p^k)}{p^k}-\frac{\omega_P(p^{k+1})}{p^{k+1}}\right)\right).\]
We also introduce equivalent distance 
 \[
 \mathbb{D}^*_P(f,g;y;x)=\left(\sum_{y\le p^k\le x}\frac{1-\operatorname{Re}{(f(p^k)\overline{g(p^k)}})}{p^k}+\sum_{p\in N_P(x)}\frac{1-\operatorname{Re}{(f(p^k)\overline{g(p^k)}})}{x}\right)^{\frac{1}{2}}.
 \]
The following proposition plays crucial role.
\begin{prop}\label{key} Let $f(n)$ be a multiplicative function and $g(n)$ be any sequence such that  $|f(n)|\le 1$ and $|g(n)|\le 1$ for all $n\in\mathbb{N}.$ Let $P(n)\in\mathbb{Z}[x].$ Then
\[\sum_{ n\le x}f(P(n))g(n)=\textfrak{P}(f;P;x)\sum_{n\le x}g(n)+O\left(x\mathbb{D}^*_P(1,f;x)+\frac{x(\log\log x)^{\frac{3}{2}}}{\sqrt{\log x}}\right).\]
\end{prop}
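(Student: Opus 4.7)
The plan is to reduce the statement to a second-moment bound and then apply Lemma~\ref{erdoskacpol} (polynomial Tur\'an--Kubilius) after linearizing the multiplicative deviation $f(P(n))-\textfrak{P}(f;P;x)$. First, choosing $g(n)=\operatorname{sgn}\overline{(f(P(n))-\textfrak{P}(f;P;x))}$ reduces the proposition to the $L^1$ estimate
\[
\sum_{n\le x}\bigl|f(P(n))-\textfrak{P}(f;P;x)\bigr|\ll x\mathbb{D}^*_P(1,f;x)+\frac{x(\log\log x)^{3/2}}{\sqrt{\log x}},
\]
and Cauchy--Schwarz further reduces this to the $L^2$ bound
\[
\sum_{n\le x}\bigl|f(P(n))-\textfrak{P}(f;P;x)\bigr|^2\ll x\mathbb{D}^{*2}_P(1,f;x)+\frac{x(\log\log x)^{3}}{\log x}.
\]
One may also assume $\mathbb{D}^*_P(1,f;x)\le 1$, since otherwise the conclusion is trivial from $|f|,|g|\le 1$.

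I would then write $f=f_s\cdot f_\ell$, where $f_s(p^k)=f(p^k)$ for $p\le x$ and $f_s(p^k)=1$ otherwise, so that both $f_s$ and $\textfrak{P}(f;P;x)$ depend only on primes $p\le x$. Since all factors have modulus at most $1$, $|f(P(n))-\textfrak{P}|^2\ll|f_s(P(n))-\textfrak{P}|^2+|f_\ell(P(n))-1|^2$. For the large-prime piece, any prime $p>x$ dividing $P(n)$ with $n\le x$ contributes a prime power $p^k\in N_P(x)$, and $\#\{n\le x:p^k\mid P(n)\}\ll 1$ for such $p^k$. Bounding $|f_\ell(P(n))-1|\le\sum_{p^k\|P(n),\,p>x}|f(p^k)-1|$, squaring, separating diagonal from off-diagonal contributions, and using $|f(p^k)-1|^2\le 2(1-\operatorname{Re}f(p^k))$ yields
\[
\sum_{n\le x}|f_\ell(P(n))-1|^2\ll\sum_{p^k\in N_P(x)}(1-\operatorname{Re}f(p^k))\le x\mathbb{D}^{*2}_P(1,f;x).
\]

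For the small-prime piece $|f_s(P(n))-\textfrak{P}|^2$, introduce the additive function
\[
h(m)=\sum_{\substack{p^k\|m\\ p\le x}}(f(p^k)-1),\qquad |h(p^k)|\le 2,
\]
for which Lemma~\ref{erdoskacpol} gives $\sum_{n\le x}|h(P(n))-\mu_{h,P}|^2\ll x\mathbb{D}^{*2}_P(1,f;x)+x(\log\log x)^3/\log x$. The bridge to $f_s$ is the Taylor-like expansion
\[
f_s(m)=\prod_{p\le x}\bigl(1+(f_p(m)-1)\bigr)=1+h(m)+R(m),\quad R(m)=\sum_{\substack{S\subset\{p\le x\}\\ |S|\ge 2}}\prod_{p\in S}(f_p(m)-1),
\]
and the parallel expansion $\textfrak{P}(f;P;x)=1+\mu_{h,P}+R_0$. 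Subtracting and squaring,
\[
|f_s(P(n))-\textfrak{P}|^2\ll|h(P(n))-\mu_{h,P}|^2+|R(P(n))|^2+|R_0|^2.
\]
Summing in $n$, the first term is handled by Lemma~\ref{erdoskacpol}; the higher-order remainder $R$ is controlled using the same quasi-independence of the valuations $v_p(P(n))$ across distinct primes $p\le x$, which bounds each mixed product in $|R|^2$ by a product of local variances $\ll\sum_k(1-\operatorname{Re}f(p^k))/p^k$. This yields $\sum_n|R(P(n))|^2\ll x\mathbb{D}^{*4}_P\ll x\mathbb{D}^{*2}_P$ under $\mathbb{D}^*_P\le 1$, and $|R_0|^2\ll\mathbb{D}^{*4}_P$, giving the required $L^2$ estimate.

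The main obstacle is the higher-order control in the expansion of $|f_s(P(n))-\textfrak{P}|^2$: one must show that the valuations $v_p(P(n))$ for distinct primes $p\le x$ are quasi-independent along the polynomial sequence, so that mixed products of centred local contributions factor, up to an additive error, into products of single-prime variances. This is exactly the polynomial Tur\'an--Kubilius phenomenon captured by Lemma~\ref{erdoskacpol}, and it is the source of the secondary error $x(\log\log x)^{3/2}/\sqrt{\log x}$ after one returns through Cauchy--Schwarz.
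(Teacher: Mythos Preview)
Your reduction to an $L^1$ (and then $L^2$) estimate for $f(P(n))-\textfrak{P}(f;P;x)$, the small/large prime split, and the appeal to Lemma~\ref{erdoskacpol} for the centred additive function $h(p^k)=f(p^k)-1$ are all in line with the paper's argument. The gap is in your treatment of the remainder $R$ in the expansion $f_s(m)=1+h(m)+R(m)$ (and of $R_0$ in $\textfrak{P}=1+\mu_{h,P}+R_0$). The distance $\mathbb{D}^*_P(1,f;x)$ only controls $1-\operatorname{Re}f(p^k)$, not $\operatorname{Im}f(p^k)$, so $h$ and $\mu_{h,P}$ can be large even when $\mathbb{D}^*_P\le 1$. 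Concretely, fix $y$ large and take $f(p)=e^{ic}$ for $p\le y$ and $f(p^k)=1$ otherwise, with $c\asymp(\log\log y)^{-1/2}$ so that $\mathbb{D}^{*2}_P\asymp c^2\log\log y\asymp 1$. Then $\mu_{h,P}\approx ic\log\log y\asymp i\sqrt{\log\log y}$, so $|R_0|=|\textfrak{P}-1-\mu_{h,P}|\to\infty$; likewise for typical $n$ one has $h(P(n))\approx\mu_{h,P}$, hence $|R(P(n))|\gg\sqrt{\log\log y}$ and $\sum_{n\le x}|R(P(n))|^2\gg x\log\log y$, contradicting your claimed bounds $|R_0|^2\ll\mathbb{D}^{*4}_P$ and $\sum_n|R(P(n))|^2\ll x\mathbb{D}^{*4}_P$. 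The ``quasi-independence'' heuristic cannot save this, because the issue is the size of each individual summand, not correlations.

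The paper avoids this entirely by replacing your first-order Taylor expansion with the exponential approximation $1+z=e^{z}+O(|z|^2)$ applied factor by factor: using $\bigl|\prod z_i-\prod w_i\bigr|\le\sum|z_i-w_i|$ one gets $f_s(P(n))=e^{h(P(n))}+O\bigl(\sum_{p^k\|P(n)}|f(p^k)-1|^2\bigr)$, and the error sums to $O(x\mathbb{D}^{*2})$. The crucial gain is that $\operatorname{Re}h(P(n))\le 0$ and $\operatorname{Re}\mu_{h,P}\le 0$, so $|e^{h(P(n))}-e^{\mu_{h,P}}|\ll|h(P(n))-\mu_{h,P}|$ regardless of how large the imaginary parts are. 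Cauchy--Schwarz together with Lemma~\ref{erdoskacpol} then gives the main error term, and a separate short computation shows $e^{\mu_{h,P}}=\textfrak{P}(f;P;x)+O(\mathbb{D}^*)$. If you insert this exponential step in place of your $1+h+R$ expansion, the rest of your outline goes through.
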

\begin{proof}
We begin by proving the proposition for the multiplicative function $f$ such that $f(p^k)=1$ for all $p^k\ge x.$ Note $e^{z-1}=z+O(|z-1|^2)$ for $|z|\le 1.$ By repeatedly applying triangle inequality we have that for all $|z_i|,|w_i|\le 1$ 
\begin{equation}\label{iterate1}
\left|\prod_{1\le i\le n}z_i-\prod_{1\le i\le n}w_i\right|\le \sum_{1\le i\le n}|z_i-w_i|.\end{equation}
Therefore,
\begin{align*}
\prod_{p^k\vert| P(n)}e^{f(p^k)-1}=\prod_{p^k\vert| P(n)}\left(f(p^k)+O(|f(p^k)-1|^2)\right)=\prod_{p^k\vert| P(n)}f(p^k)+O\left(\sum_{p^k\vert| P(n)}|f(p^k)-1|^2\right)
\end{align*}
and
\[f(P(n))=\prod_{p^k\vert| P(n)}f(p^k)=\prod_{p^k\vert| P(n)}e^{f(p^k)-1}+O\left(\sum_{p^k\vert| P(n)}|f(p^k)-1|^2\right).\]
We now introduce an additive function $h,$ such that $h(p^k)=f(p^k)-1.$ Clearly,
\begin{align*}
\sum_{n\le x}f(P(n))g(n)-\sum_{n\le x}&g(n)e^{h(P(n))}\ll \sum_{n\le x}\sum_{\substack{p^k\vert| P(n),\\ p^k\le x}}|f(p^k)-1|^2&\\&\ll x\sum_{p^k\le x}\frac{|f(p^k)-1|^2}{p^k}\ll x\mathbb{D}^*(f,1;x)^2.
\end{align*}
Since 
$|e^a-e^b|\ll|a-b|$ for $\operatorname{Re}{(a)},\operatorname{Re}{(b)}\le 0,$
Cauchy-Schwarz together with Lemma~\ref{erdoskacpol} imply 
\begin{align*}
\sum_{n\le x}g(n)e^{h(P(n))}-e^{\mu_{h,P}}\sum_{n\le x}g(n)&\ll \sum_{n\le x}|e^{h(P(n))}-e^{\mu_{h,P}}|\ll \sum_{n\le x}|h(P(n))-\mu_{h,P}|\\&\le (x\sum_{n\le x}|h(P(n))-\mu_{h,P}|^2)^{1/2}\ll x\mathbb{D}^*(f,1;x)+\frac{x(\log\log x)^{\frac{3}{2}}}{\sqrt{\log x}}\cdot
\end{align*}
We introduce $\mu_{h,P}=\sum_{p\le x}\mu_{h,p},$ where 
\[\mu_{h,p}=\sum_{p^k\le x}h(p^k)\left(\frac{\omega_P(p^k)}{p^k}-\frac{\omega_P(p^{k+1})}{p^{k+1}}\right)\]
 and observe
 \[e^{\mu_{h,p}}=1+\mu_{h,p}+O(\mu_{h,p}^2)=\sum_{1\le p^k\le x}f(p^k)\left(\frac{\omega_P(p^k)}{p^k}-\frac{\omega_P(p^{k+1})}{p^{k+1}}\right)+O\left(\frac{1}{x}+\frac{1}{p}\sum_{p^k\le x}\frac{|h(p^k)|}{p^k}\right)\cdot\]
 Note that $|e^{\mu_{h,p}}|\le 1.$
 Using~\eqref{iterate1} and the Cauchy-Schwarz inequality once again yields 
 \begin{align*}
 |e^{\mu_{h,P}}-\textfrak{P}(f;P;x)|&\le\sum_{p\le x}\left|e^{\mu_{h,p}}-\sum_{1\le p^k\le x}f(p^k)\left(\frac{\omega_P(p^k)}{p^k}-\frac{\omega_P(p^{k+1})}{p^{k+1}}\right)+O\left(\frac{1}{x}\right)\right|
 \\&\ll \sum_{p^k\le x}\frac{1}{p}\frac{|f(p^k)-1|}{p^k}+\sum_{p\le x}\frac{1}{x}\ll \mathbb{D}^*(f,1;x)+\frac{1}{\log x}
 \end{align*}
 which completes the proof of the lemma in the special case when $f(p^k)=1$ for $p^k\ge x.$ 
 
 We now consider any multiplicative function $f$ and decompose $f(n)=f_{s}(n)f_{\l}(n)$ where 
\[f_{s}(p^k) = \begin{cases} f(p^k), & \mbox{if } p^k\le x \\ 1, & \mbox{if } p^k>x\end{cases}\]
and
\[f_{\l}(p^k) = \begin{cases} 1, & \mbox{if } p^k\le x \\ f(p^k), & \mbox{if } p^k>x.\end{cases}.\]
Note that for a fixed prime power $p^k\in N_P(x),$
$$|\{ n\le x \mid p^k\vert P(n) \}|\le \omega_P(p^k)$$
and each $P(n)$ is divisible by $\ll \deg P$ elements of $N_P(x).$
Using Cauchy-Schwarz
\begin{align*}
\sum_{n\le x}f(P(n))g(n)-\sum_{n\le x}&f_{s}(P(n))g(n)\ll \sum_{n\le x}\sum_{\substack{p^k\vert| P(n),\\ p^k\ge x}}|f(p^k)-1|&\ll x\left(\sum_{p^k\in N_P(x)}\frac{|f(p^k)-1|^2}{x} \right)^{\frac{1}{2}}.
\end{align*}
We are left to collect the error terms and note that 
\[\mathbb{D}^*(1,f;x)+\left(\sum_{p^k\in N_P(x)}\frac{1-\operatorname{Re}(f(p^k))}{x}\right)^{\frac{1}{2}}\le 2\mathbb{D}_P^*(1,f;x).\]
 \end{proof}
Let $f,g:\mathbb{N}\to\mathbb{U}$ be multiplicative functions. For any two irreducible polynomials $P,Q\in\mathbb{Z}[x]$ we define
\[M(f,g;x)=\frac{1}{x}\sum_{n\le x}f(P(n)){g(Q(n))}.\]
We define $\omega(p^k,p^{\l})$ to be the quantity such that
$$\{ n\le x \mid  p^k\vert| P(n), p^{\l}\vert| Q(n)\}=x\omega(p^k,p^{\l})+O(1).$$
We note that if $p\nmid\text{res}(P,Q)$ then $\omega(p^k,p^{\l})=0$ unless $k=0$ or $\l=0.$ In the latter case,
\[\omega(p^k,1)=\frac{\omega_P(p^k)}{p^k}-\frac{\omega_P(p^{k+1})}{p^{k+1}}\]
and
\[\omega(1,p^{\l})=\frac{\omega_Q(p^{\l})}{p^{\l}}-\frac{\omega_Q(p^{\l+1})}{p^{\l+1}}\cdot\]
Furthermore, by the Chinese Remainder Theorem we have 
\begin{align*}
\{ n\le x \mid  d_1\vert P(n), d_2\vert Q(n)\}&=xF(d_1,d_2)+O(\omega_P(d_1)\omega_{Q}(d_2))=xF(d_1,d_2)+O(x^{\varepsilon}).\end{align*}
for some multiplicative function $F(d_1,d_2).$ 
Our main goal in this section is to prove that the mean value $M(f,g;x)$ satisfies the ``local-to-global" principle.
We first evaluate the local correlations.
\begin{lemma}\label{localcorrelations1}
Let $f,g:\mathbb{N}\to\mathbb{U}$ be multiplicative functions. Define $f_p,g_p$ as in~\eqref{localfactor}. Let $P,Q\in\mathbb{Z}[x]$ and $\text{res}(P,Q)\ne 0.$  Then,
\begin{align*}
\frac{1}{x}\sum_{n\le x}f_p(P(n))g_p(Q(n))=\sum_{\substack{p^k,p^{\l}\ge 1
}}f(p^k)g(p^{\l})\omega(p^k,p^{\l})+O\left(\frac{\log x}{x\log p}\right).
\end{align*}
In particular, if $p\nmid \text{res}(P,Q),$ then 
\begin{align*}
\frac{1}{x}&\sum_{n\le x}f_p(P(n))g_p(Q(n))\\&=\left(\sum_{k\ge 0}f(p^k)\left(\frac{\omega_P(p^k)}{p^k}-\frac{\omega_P(p^{k+1})}{p^{k+1}}\right)+\sum_{k\ge 0}g(p^k)\left(\frac{\omega_Q(p^k)}{p^k}-\frac{\omega_Q(p^{k+1})}{p^{k+1}}\right)-1\right)+O\left(\frac{\log x}{x\log p}\right).
\end{align*}
\end{lemma}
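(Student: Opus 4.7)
The strategy is to partition $\{n \le x\}$ according to the unique pair $(k, \ell) \in \mathbb{Z}_{\ge 0}^2$ with $p^k \| P(n)$ and $p^\ell \| Q(n)$. On each such class $f_p(P(n)) g_p(Q(n)) = f(p^k) g(p^\ell)$, and by the defining property of $\omega(p^k, p^\ell)$ the class has cardinality $x \omega(p^k, p^\ell) + O(1)$. Summing,
$$\sum_{n \le x} f_p(P(n)) g_p(Q(n)) = x \sum_{k, \ell \ge 0} f(p^k) g(p^\ell) \omega(p^k, p^\ell) + O(\# \mathcal{N}),$$
where $\mathcal{N} := \{(k, \ell) : \exists\, n \le x \text{ with } p^k \| P(n),\ p^\ell \| Q(n)\}$.

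The key estimate is $\# \mathcal{N} \ll \log x / \log p$. To see this, I would invoke the B\'ezout identity for the resultant: there exist $A, B \in \mathbb{Z}[x]$ with $A(x) P(x) + B(x) Q(x) = \mathrm{res}(P, Q)$. Evaluating at $n$, if $p^k \mid P(n)$ and $p^\ell \mid Q(n)$, then $p^{\min(k, \ell)} \mid \mathrm{res}(P, Q)$, whence $\min(k, \ell) \le v_p(\mathrm{res}(P, Q))$, a constant depending only on $P$ and $Q$. Since also $p^{\max(k, \ell)} \le \max(|P(n)|, |Q(n)|) \ll x^{O(1)}$, we get $\max(k, \ell) \ll \log x / \log p$, and the bound on $\# \mathcal{N}$ follows. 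Dividing by $x$ supplies the claimed $O(\log x / (x \log p))$ error term.

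For the ``in particular'' clause, the hypothesis $p \nmid \mathrm{res}(P, Q)$ forces $\min(k, \ell) = 0$ in the above argument, so $\omega(p^k, p^\ell)$ vanishes whenever both $k, \ell \ge 1$, and the double sum collapses into three one-parameter pieces. The $(0, 0)$ term is $\omega(1, 1) = 1 - \omega_P(p)/p - \omega_Q(p)/p$ (since modulo $p$ the events $p \mid P(n)$ and $p \mid Q(n)$ are now disjoint). Recognizing
$$\sum_{k \ge 0} f(p^k) \left( \frac{\omega_P(p^k)}{p^k} - \frac{\omega_P(p^{k+1})}{p^{k+1}} \right) = M_p(f(P))$$
and the analogous identity for $g$ and $Q$, the three pieces telescope into $M_p(f(P)) + M_p(g(Q)) - 1$. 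The only genuinely delicate point is the resultant bound on $\# \mathcal{N}$; the rest is simply unpacking definitions.
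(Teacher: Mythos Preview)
Your proposal is correct and follows essentially the same approach as the paper: partition $n\le x$ according to the pair $(k,\ell)$ with $p^k\Vert P(n)$, $p^\ell\Vert Q(n)$, and replace each class by $x\,\omega(p^k,p^\ell)+O(1)$. The paper's proof is terser and does not spell out the bound on the number of contributing pairs; your use of the B\'ezout relation $A P+B Q=\mathrm{res}(P,Q)$ to force $\min(k,\ell)\le v_p(\mathrm{res}(P,Q))$ (and hence $\#\mathcal N\ll \log x/\log p$) is exactly the right justification and makes the error-term estimate explicit.
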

\begin{proof}
We first suppose that $p\nmid\text{res}(P,Q).$ In this case we have 
\begin{align*}
\frac{1}{x}&\sum_{n\le x}f_p(P(n))g_p(Q(n))=\frac{1}{x}\left(\sum_{\substack{p^k\le x,\\ p^k\vert| P(n)}}f(p^k)+\sum_{\substack{p^{\l}\le x,\\ p^{\l}\vert| Q(n)}}g(p^{\l})+\sum_{\substack{n\le x,\\ p^{0}\vert|P(n)Q(n)}}1\right)\\&=\left(\sum_{k\ge 0}f(p^k)\left(\frac{\omega_P(p^k)}{p^k}-\frac{\omega_P(p^{k+1})}{p^{k+1}}\right)+\sum_{k\ge 0}g(p^k)\left(\frac{\omega_Q(p^k)}{p^k}-\frac{\omega_Q(p^{k+1})}{p^{k+1}}\right)-1\right)+O\left(\frac{\log x}{x\log p}\right).
\end{align*}
More generally,
\begin{align*}
\frac{1}{x}\sum_{n\le x}f_p(P(n))g_p(Q(n))=\frac{1}{x}\sum_{\substack{p^k,p^{\l}\le x,\\ p^k\vert| P(n),\\   p^{\l}\vert| Q(n)
}}f(p^k)g(p^{\l})=\sum_{\substack{p^k,p^{\l}\ge 1
}}f(p^k)g(p^{\l}){\omega(p^k,p^{\l})}+O\left(\frac{\log x}{x\log p}\right).
\end{align*}
This completes the proof of the lemma.
\end{proof}
 \begin{theoremn}[\ref{intromain}]
Let $f,g:\mathbb{N}\to\mathbb{U}$ be multiplicative functions. Let $P,Q\in\mathbb{Z}[x]$ be two polynomials, such that $\text{res}(P,Q)\ne 0.$ Then,
$$
\frac{1}{x}\sum_{n\le x}f(P(n)){g(Q(n))}=\prod_{p\le x}M_p(f(P),g(Q)) + \textrm{\rm Error}(f(P),g(Q),x)
$$
where
$$
\textrm{\rm Error}(f(P),g(Q),x) \ll \mathbb{D}_P(1,f;\log x;x)+\mathbb{D}_Q(1,g;\log x;x)+\frac{1}{\log \log x}\cdot
$$
\end{theoremn}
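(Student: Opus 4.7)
The plan is to extend the argument used for Proposition~\ref{key} to the two-polynomial setting, by working with the joint additive function $H(n) = h(P(n)) + \widetilde{h}(Q(n))$ where $h(p^k) = f(p^k) - 1$ and $\widetilde{h}(p^{\ell}) = g(p^{\ell}) - 1$. First, as in the concluding step of the proof of Proposition~\ref{key}, I would truncate $f$ and $g$ to be trivial on prime powers exceeding $x$; the resulting error is controlled by the $N_P(x)$ and $N_Q(x)$ contributions to $\mathbb{D}_P(1,f;\log x;x)$ and $\mathbb{D}_Q(1,g;\log x;x)$ via Cauchy--Schwarz, using the fact that each $P(n)$ (resp.~$Q(n)$) has only $O(\deg P)$ (resp.~$O(\deg Q)$) prime-power factors in $N_P(x)$ (resp.~$N_Q(x)$).

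Next, applying $e^{z-1} = z + O(|z-1|^2)$ termwise to the prime-power factors of $f$ and $g$, and telescoping via~\eqref{iterate1}, gives
\[
f(P(n))g(Q(n)) = e^{H(n)} + O\Bigl(\sum_{p^k \| P(n)} |h(p^k)|^2 + \sum_{p^{\ell} \| Q(n)} |\widetilde{h}(p^{\ell})|^2\Bigr),
\]
whose total error on summation is $O(x\,\mathbb{D}^*_P(1,f;x)^2 + x\,\mathbb{D}^*_Q(1,g;x)^2)$. Then a Tur\'an--Kubilius argument for the joint additive function $H(n)$---extending Lemma~\ref{erdoskacpol} by replacing the single-polynomial density $\omega_P(p^k)/p^k$ by the joint density $\omega(p^k,p^{\ell})$ from Lemma~\ref{localcorrelations1}---gives concentration of $H(n)$ around the mean $\mu = \sum_p \sum_{k,\ell \ge 0} (h(p^k) + \widetilde{h}(p^{\ell}))\,\omega(p^k,p^{\ell})$. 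The variance calculation is essentially unchanged, because for $p \nmid \operatorname{res}(P,Q)$ the joint density $\omega(p^k,p^{\ell})$ vanishes whenever both $k,\ell \ge 1$, so cross-terms between $h$ and $\widetilde{h}$ appear only at the finitely many primes dividing $\operatorname{res}(P,Q)$. Combining this concentration with Cauchy--Schwarz and the Lipschitz estimate $|e^a - e^b| \ll |a-b|$ for $\operatorname{Re}(a),\operatorname{Re}(b) \le 0$ yields
\[
\frac{1}{x}\sum_{n\le x} e^{H(n)} = e^{\mu} + O\Bigl(\mathbb{D}^*_P(1,f;x) + \mathbb{D}^*_Q(1,g;x) + \frac{(\log\log x)^{3/2}}{\sqrt{\log x}}\Bigr).
\]

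Finally, factor $e^{\mu} = \prod_{p \le x} e^{\mu_p}$ and expand each $e^{\mu_p} = 1 + \mu_p + O(|\mu_p|^2)$; Lemma~\ref{localcorrelations1} identifies $1 + \mu_p$ with $M_p(f(P), g(Q))$ up to a small error, and the telescoping inequality~\eqref{iterate1} converts this into the product $\prod_{p \le x} M_p(f(P),g(Q))$ with acceptable loss. To sharpen the final bound from $\mathbb{D}^*_P(1,f;x)$ (which weighs small primes heavily) to the cleaner $\mathbb{D}_P(1,f;\log x;x)$, I would handle the primes $p \le \log x$ separately: these contribute exactly to $\prod_{p \le \log x} M_p(f(P),g(Q))$ in the main term, and the residual loss from them in the variance satisfies $\sum_{p \le \log x} 1/p \ll \log\log\log x$, which fits comfortably inside the $1/\log\log x$ term after square-rooting. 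The main obstacle will be the joint Tur\'an--Kubilius step: at the finitely many primes $p \mid \operatorname{res}(P,Q)$ the joint density no longer factors as a product of marginals, so the second-moment computation must be carried out explicitly using $\omega(p^k,p^{\ell})$; fortunately, only finitely many primes are involved, so this contributes only a bounded absolute error which is absorbed into the final $1/\log\log x$.
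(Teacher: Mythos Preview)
Your joint Tur\'an--Kubilius plan is close in spirit to the paper's Proposition~\ref{key}, but it differs from the paper's actual proof of Theorem~\ref{intromain} and contains a real gap at the small primes.

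The paper does \emph{not} redo the concentration argument for the joint additive function. Instead it splits $f=f_sf_\ell$ and $g=g_sg_\ell$ at $y=(1-\varepsilon)\log x$, applies Proposition~\ref{key} \emph{twice} (once to $f_\ell(P(n))$ with weight $f_s(P(n))g(Q(n))$, then to $g_\ell(Q(n))$ with weight $f_s(P(n))g_s(Q(n))$), producing $\textfrak{P}(f_\ell;P;x)\textfrak{P}(g_\ell;Q;x)\sum_{n\le x}f_s(P(n))g_s(Q(n))$. The small-prime sum is then expanded by writing $f_s=1*\theta_s$, $g_s=1*\gamma_s$ and summing over divisor pairs $(d_1,d_2)$ using the joint density $F(d_1,d_2)$; since $\prod_{p\le y}p\le x$, the sum is genuinely finite and the error is $O(x^{1/2+\varepsilon})$. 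This convolution step is what manufactures $\prod_{p\le y}M_p(f(P),g(Q))$ \emph{exactly}, including at primes dividing $\operatorname{res}(P,Q)$.

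Your route instead yields $e^{\mu}=\prod_p e^{\mu_p}$ as main term, and you then want $e^{\mu_p}\approx M_p$. For $p\nmid\operatorname{res}(P,Q)$ one has $M_p=1+\mu_p$ and the discrepancy $e^{\mu_p}-M_p=O(\mu_p^2)$; for $p\mid\operatorname{res}(P,Q)$ there is an additional cross term $\sum_{k,\ell\ge 1}h(p^k)\tilde h(p^\ell)\omega(p^k,p^\ell)$. Both can be absorbed, but only into $\mathbb{D}^*_P(1,f;x)+\mathbb{D}^*_Q(1,g;x)$ ranging over \emph{all} primes, not into $\mathbb{D}_P(1,f;\log x;x)+\mathbb{D}_Q(1,g;\log x;x)$. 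Your proposed remedy (``handle $p\le\log x$ separately, the residual variance is $\sum_{p\le\log x}1/p\ll\log\log\log x$, which fits inside $1/\log\log x$ after square-rooting'') is simply false: $(\log\log\log x)^{1/2}\to\infty$ while $1/\log\log x\to 0$. No amount of variance bookkeeping will remove this loss, because the exponential approximation $e^{z-1}=z+O(|z-1|^2)$ is intrinsically lossy when $|f(p^k)-1|$ is large at small primes. The paper avoids this entirely by never using Tur\'an--Kubilius on the small primes; the Dirichlet-convolution computation of $\sum_n f_s(P(n))g_s(Q(n))$ is the missing ingredient in your proposal.
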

\begin{proof}
Choose $y= (1-\varepsilon)\log x.$ We begin by decomposing $f(n)=f_{s}(n)f_{\l}(n)$ where 
\[f_{s}(p^k) = \begin{cases} f(p^k), & \mbox{if } p^k\le y \\ 1, & \mbox{if } p^k>y\end{cases}\]
and
\[f_{\l}(p^k) = \begin{cases} 1, & \mbox{if } p^k\le y \\ f(p^k), & \mbox{if } p^k>y.\end{cases}\]
By analogy, we write $g(n)=g_s(n)g_{\l}(n).$
We apply Proposition~\ref{key} to get
\begin{align*}
\sum_{n\ge 1}f_{\l}(P(n))f_s(P(n))g(Q(n))=\textfrak{P}&(f_{\l};P;x)\sum_{n\le x}{f_s(P(n))g(Q(n))}\\&+O\left(x\mathbb{D}_{P}^*(1,f_{\l};y;x)+\frac{x(\log \log x)^{\frac{3}{2}}}{\sqrt{\log x}}\right).
\end{align*}
We now apply Proposition~\ref{key} to the inner sum to arrive at
\begin{align*}
\sum_{n\le x}g_{\l}(Q(n))g_s(Q(n))f_s(P(n))&=\textfrak{P}({g_{\l}};Q;x)\sum_{n\le x}f_s(P(n))g_s(Q(n))\\&+O\left(x\mathbb{D}_{P}^*(1,f_{\l};y;x)+x\mathbb{D}_{Q}^*(1,g_{\l};y;x)+\frac{x(\log \log x)^{\frac{3}{2}}}{\sqrt{\log x}}\right).
\end{align*}
Combining the last two identities we conclude
\begin{align*}
\sum_{n\le x}f(P(n)){g(Q(n))}&=\textfrak{P}(f_{\l};P;x)\textfrak{P}(g_{\l};Q;x)\sum_{n\le x}f_{s}(P(n)){g_{s}(Q(n))}\\&+O\left(x\mathbb{D}_{P}^*(1,f_{\l};y;x)+x\mathbb{D}_{Q}^*(1,g_{\l};y;x)+\frac{x(\log \log x)^{\frac{3}{2}}}{\sqrt{\log x}}\right).
\end{align*}
Let $f_{s}=1*\theta_{s},$ $g_s=1*\gamma_{s}.$ Then $\theta_{s}(p^k)=0$ and $\gamma_s(p^k)=0$ whenever $p^k\ge y.$
Since 
$\prod_{p^k\le y}p=e^{y+o(y)}\le x$
as long as $y\le (1-\varepsilon)\log x$ the following sums are supported on the integers $d_1,d_2\le x.$ Hence, 
\begin{align*}
\sum_{n\le x}f_{s}(P(n))&{g_{s}(Q(n)}=\sum_{\substack{d_1,d_2\le x,\\ p|d_i\Rightarrow p\le y}}\theta_{s}(d_1){\gamma_{s}(d_2)}\sum_{\substack{n\le x,\\ d_1|P(n),\\ d_2|Q(n)}}1\\&=\sum_{\substack{d\le x,\\d|\text{res}(P,Q)}}\sum_{\substack{d_1,d_2\le x,\\ (d_1,d_2)=d,\\ p|d_i\Rightarrow p\le y}}\theta_{s}(d_1){\gamma_{s}(d_2)}F(d_1,d_2)x+O\left(x^{\varepsilon}\sum_{d_1,d_2\le x}|\theta_{s}(d_1){\gamma_{s}(d_2)}|\right)\\&=\sum_{\substack{d\le x,\\d|\text{res}(P,Q)}}\sum_{\substack{d_1,d_2\ge 1,\\ (d_1,d_2)=d,\\ p|d_i\Rightarrow p\le y}}\theta_{s}(d_1){\gamma_{s}(d_2)}F(d_1,d_2)x+O\left(x^{\varepsilon}\sum_{d_1,d_2\le x}|\theta_{s}(d_1){\gamma_{s}(d_2)}|\right).
\end{align*}
To estimate the error term we observe
\begin{align}\label{error}
\sum_{d_1,d_2\le x}|\theta_{s}(d_1){\gamma_{s}(d_2)}|&\le x^{\frac{1}{2}}\left(\sum_{d\ge 1}\frac{|\theta_{s}(d)|}{d^{\frac{1}{4}}}\right)\left(\sum_{d\ge 1}\frac{|\gamma_{s}(d)|}{d^{\frac{1}{4}}}\right)\\&\le x^{\frac{1}{2}}\left(\prod_{p\le y}\left(\sum_{k\ge 0}\frac{|\theta_{s}(p^k)|}{p^{\frac{k}{4}}}\right)\right)\left(\prod_{p\le y}\left(\sum_{k\ge 0}\frac{|\gamma_{s}(p^k)|}{p^{\frac{k}{4}}}\right)\right)\nonumber\\&\ll x^{\frac{1}{2}}\left(\prod_{p\le y}\left(1+\frac{2}{p^{\frac{1}{4}}}\right)\right)^2\ll x^{\frac{1}{2}}\exp\left(\frac{3y^{3/4}}{\log y}\right)\cdot\nonumber
\end{align}
The last sum is $O(x^{\frac{1}{2}+\varepsilon})$ for $y\ll \log x$ and $y\to\infty.$
It easy to see that for $p\le y,$ Lemma~\ref{localcorrelations1} implies
\[M_p(f,g)=\sum_{p^k,p^{\l}\ge 1}\theta(p^k)\gamma(p^{\l})F(p^k,p^{\l}),\]
where $M_p(f,g)$ defined as in~\eqref{localmultiple}.
By multiplicativity the contribution of small primes is
\begin{align}\label{small correlation}
\sum_{\substack{d|\text{res}(P,Q)}}\sum_{\substack{d_1,d_2\ge 1,\\ (d_1,d_2)=d,\\ p|d_i\Rightarrow p\le y}}\theta_{s}(d_1){\gamma_{s}(d_2)}F(d_1,d_2)=\prod_{p\le y}M_p(f,g).
\end{align}
We are left to estimate $\textfrak{P}({f_{\l}};P;x)\textfrak{P}({g_{\l}};Q;x).$ The contribution of primes $p^k>y$ and $p\le y$ is 
\begin{align*}
\prod_{\substack{p^k\ge y,\\ p<y}}\left(1+\sum_{i\ge k}\frac{\theta_{\l}(p^k)\omega_P(p^k)}{p^k}\right)\prod_{\substack{p^k\ge y,\\ p<y}}\left(1+\sum_{i\ge k}\frac{\gamma_{\l}(p^k)\omega_Q(p^k)}{p^k}\right)&=1+O\left(\sum_{\substack{p^k\ge y\\ p<y}}\frac{1}{p^k}\right)&\\=1+O\left(\frac{1}{y}\cdot \frac{y}{\log y}\right)=1+O\left(\frac{1}{\log y}\right).
\end{align*}
 Furthermore, for $p\ge y$ we clearly have $(p,\text{res}(P,Q))=1$ and   
\begin{align*}
&\textfrak{P}({f_{\l}};P;x)\textfrak{P}({g_{\l}};Q;x)\\&= \left(1+O\left(\frac{1}{\log y}\right)\right)\cdot \prod_{y<p\le x}\left(1+\sum_{k\ge 1}\frac{\theta_{\l}(p^k)\omega_P(p^k)}{p^k}\right)\prod_{y<p\le x}\left(1+\sum_{k\ge1}\frac{\gamma_{\l}(p^k)\omega_Q(p^k)}{p^k}\right)\\&=\left(1+O\left(\frac{1}{\log y}\right)\right)\\&\times
\prod_{y<p\le x}\left(1+\sum_{k\ge 1}\frac{\theta(p^k)\omega_P(p^k)}{p^k}+\sum_{k\ge 1}\frac{{\gamma(p^k)\omega_Q(p^k)}}{p^k}+\sum_{k\ge 1}\frac{\theta(p^k)\omega_P(p^k)}{p^k}\sum_{k\ge 1}\frac{{\gamma(p^k)\omega_Q(p^k)}}{p^k}\right)\\&=\left(1+O\left(\frac{1}{\log y}\right)\right)
\exp\left(O\left(\sum_{y\le p\le x}\frac{1}{p^2}\right)\right)\prod_{y<p\le x}\left(1+\sum_{k\ge 1}\frac{\theta(p^k)\omega_P(p^k)}{p^k}+\sum_{k\ge 1}\frac{{\gamma(p^k)\omega_Q(p^k)}}{p^k}\right)\\&
=\left(1+O\left(\frac{1}{\log y}\right)\right)\prod_{y<p\le x}\left(1+\sum_{k\ge 1}\frac{\theta(p^k)\omega_P(p^k)}{p^k}+\sum_{k\ge 1}\frac{{\gamma(p^k)\omega_Q(p^k)}}{p^k}\right)
\end{align*}
and thus
$$\textfrak{P}({f_{\l}};P;x)\textfrak{P}({g_{\l}};Q;x)=\prod_{p\ge y}M_p(f,g)+O\left(\frac{1}{\log y}\right).$$ We note that $D^*_P(1,f;\log x;x)$ can be replaced with $D_P(1,f;\log x;x)$ at a cost $O(\frac{\log \log x}{\log x}).$
 Combining all of the above  we arrive at the result claimed.
\end{proof}

Applying Theorem~\ref{intromain} and Lemma~\ref{localcorrelations1} with $g=1$ an we deduce the following corollary.
\begin{corollaryn}[\ref{nair}]
Let $f:\mathbb{N}\to\mathbb{U}$ be a multiplicative function and $P\in\mathbb{Z}[x]$ Then
\[
\frac{1}{x}\sum_{ n\le x}f(P(n))=\prod_{p\le x}M_p(f(P))+O\left(\mathbb{D}_{P}(1,f;\log x;x)+\frac{1}{\log \log x}\right).
\]
\end{corollaryn}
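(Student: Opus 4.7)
The plan is to deduce the corollary directly from Theorem~\ref{intromain} by specializing the second factor to be trivial. Concretely, I would take $g\equiv 1$ (the constantly-$1$ multiplicative function) and $Q\equiv 1$ (a nonzero constant polynomial), so that $\text{res}(P,Q)=1\ne 0$ and the hypothesis of Theorem~\ref{intromain} is satisfied.

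Under this specialization the left-hand side of Theorem~\ref{intromain} equals $\frac{1}{x}\sum_{n\le x}f(P(n))$, since $g(Q(n))=g(1)=1$. The error contribution from $g$ and $Q$ vanishes: each summand $1-\operatorname{Re}(g(p^k)\overline{1})$ is identically zero, and $N_Q(x)=\emptyset$ because no prime power with $k\ge 1$ divides the constant value $Q(n)=1$. Hence $\mathbb{D}_Q(1,g;\log x;x)=0$ and the overall error collapses to $\mathbb{D}_P(1,f;\log x;x)+\frac{1}{\log\log x}$, matching the claim.

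For the local factors, the definition of $g_p$ together with $g\equiv 1$ forces $g_p\equiv 1$, so $g_p(Q(n))=1$ identically. Lemma~\ref{localcorrelations1} with $g\equiv 1$ and $Q\equiv 1$ (or a direct inspection of the defining limit) then yields
\[
M_p(f(P),g(Q)) \;=\; \lim_{x\to\infty}\frac{1}{x}\sum_{n\le x}f_p(P(n)) \;=\; M_p(f(P)).
\]
Substituting into the product $\prod_{p\le x}M_p(f(P),g(Q))$ in Theorem~\ref{intromain} yields the asserted formula.

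In short, there is no genuine obstacle: once Theorem~\ref{intromain} is in place, Corollary~\ref{nair} is a pointwise specialization. The real work has already been done upstream---the decomposition $f=f_sf_\ell$, the application of Proposition~\ref{key} to treat the large-prime factor $f_\ell$, and the CRT-based evaluation of the short-prime piece via $f_s=1*\theta_s$---and the one-function case simply drops the symmetric terms associated to $g$. If one insisted on a standalone proof, one would simply replay that same argument with the $g$-branch suppressed; the estimates of Proposition~\ref{key} and Lemma~\ref{erdoskacpol} are already stated in the form needed for a single polynomial $P$.
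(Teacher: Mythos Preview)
Your proposal is correct and takes essentially the same approach as the paper, which also derives the corollary by applying Theorem~\ref{intromain} with $g\equiv 1$ and invoking Lemma~\ref{localcorrelations1} to identify the local factors. Your explicit choice $Q\equiv 1$ and the verification that $\mathbb{D}_Q(1,g;\log x;x)=0$ and $M_p(f(P),g(Q))=M_p(f(P))$ are exactly the details the paper leaves implicit in its one-line proof.
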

\end{section}
\begin{section}{Corollaries required for further applications}
To state some corollaries required for our future applications we introduce a few notations. We fix two integer numbers $a,b\ge 1.$ For multiplicative functions $f,g:\mathbb{N}\to\mathbb{C}$ such that $\mathbb{D}(1,f;\infty)<\infty,$ $\mathbb{D}(1,g;\infty)<\infty,$   we set 
 $f=1*\theta,$ $g=1*\gamma.$ For $(r,(a,b))=1$ we define 
 \begin{equation}\label{notation}
 G(f;{g};r;x)=G(r,x):=\prod_{p^k\vert|r,\ p\le x}\left(\theta(p^k){\gamma(p^k)}+\delta_b\sum_{i> k}\frac{\theta(p^k){\gamma(p^i)}}{p^{i-k}}+\delta_a\sum_{i> k}\frac{{\gamma(p^k)}\theta(p^i)}{p^{i-k}}\right)\end{equation}
 and  $\delta_{\l}=0$ when $p\vert {\l}$ and $\delta_{\l}=1$ otherwise.
 For $(r,(a,b))>1$ we set
 \[G(r,x):=0.\]
 We remark that in~\eqref{notation} we allow $k=0.$
We can now deduce the following corollary.
 \begin{corollary}\label{linearmain}
Let $f,g:\mathbb{N}\to\mathbb{U}$ be multiplicative functions. Suppose that $\mathbb{D}(1,f;\infty)<\infty,$ $\mathbb{D}(1,g;\infty)<\infty.$ Let  $a,b\ge 1$,\ $c,d$ be integers with $(a,c)=(b,d)=1$ and $ad\ne bc.$ Then,
\begin{align*}
\frac{1}{x}\sum_{n\le x}f(an+c){g(bn+d)}=\sum_{r|ad-bc}\frac{G(f;g;r;x)}{r}+o(1).
\end{align*}
 \end{corollary}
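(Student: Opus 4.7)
My plan is to specialize Theorem~\ref{intromain} to the linear polynomials $P(x)=ax+c$ and $Q(x)=bx+d$, whose resultant is $\operatorname{res}(P,Q)=ad-bc\neq 0$, and then to re-express the resulting Euler product as the Dirichlet-type sum over divisors of $ad-bc$. The first task is to see that the error term in Theorem~\ref{intromain} is $o(1)$ under only the hypotheses $\mathbb{D}(1,f;\infty),\mathbb{D}(1,g;\infty)<\infty$. The ``short-prime'' tails $\sum_{\log x\le p\le x}(1-\operatorname{Re}f(p))/p$ (and the $g$-analogue) vanish as $x\to\infty$ because the series defining $\mathbb{D}$ converges. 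For the contribution from $N_P(x)$ I would use the simple fact that for a linear $P$, any $p^k\in N_P(x)$ is necessarily a prime $p$ with $x\le p\le ax+c$ (a higher power $p^k$ with $p\ge x$ would exceed $|P(n)|$ for $n\le x$), so $1/x\ll_{a,c}1/p$ and
\[\sum_{p\in N_P(x)}\frac{1-\operatorname{Re}f(p)}{x}\;\ll_{a,c}\;\sum_{p\ge x}\frac{1-\operatorname{Re}f(p)}{p}=o(1),\]
and likewise for $g,Q$. Therefore $\textrm{Error}(f(P),g(Q),x)=o(1)$.

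Second, I would evaluate the local factors $M_p(f(P),g(Q))$ via Lemma~\ref{localcorrelations1}. Writing $f=1\ast\theta$ and $g=1\ast\gamma$, an application of the Chinese Remainder Theorem splits the joint density $\omega(p^i,p^j)$ according to the $p$-adic valuations of $P(n)$ and $Q(n)$, yielding the two regimes from Corollary~\ref{introlinear}: (i) $M_p=1$ when $p\mid(a,b)$; and (ii) when $p\nmid(a,b)$, setting $k=v_p(ad-bc)$,
\[M_p(f(P),g(Q))=\sum_{i=0}^{k}T_{p,i},\qquad T_{p,i}:=\frac{\theta(p^i)\gamma(p^i)}{p^i}+\delta_b\sum_{j>i}\frac{\theta(p^i)\gamma(p^j)}{p^j}+\delta_a\sum_{j>i}\frac{\gamma(p^i)\theta(p^j)}{p^j}.\]
The splitting into $T_{p,i}$ records that the $p$-adic valuations of $P(n)$ and $Q(n)$ can both be positive only up to a joint cap of $k=v_p(ad-bc)$, with the extra tails $\delta_a\sum_{j>i}\cdots$ and $\delta_b\sum_{j>i}\cdots$ accounting for the case when exactly one of $P(n),Q(n)$ is divisible by a higher power than the other.

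Finally, I would expand the product $\prod_{p\le x}M_p(f(P),g(Q))$ multiplicatively: at each prime $p\nmid(a,b)$ one selects an index $i_p\in\{0,1,\dots,v_p(ad-bc)\}$, which corresponds to an integer $r:=\prod_p p^{i_p}$ dividing $ad-bc$ with $(r,(a,b))=1$. The trivial identity $p^{-i_p}\cdot p^{-(j-i_p)}=p^{-j}$ shows that $T_{p,i_p}$ is precisely the $p$-factor of $G(f;g;r;x)/r$ at the place where $p^{i_p}\|r$, so multiplying the chosen local pieces reconstructs exactly one summand of $G(r,x)/r$. Hence
\[\prod_{p\le x}M_p(f(P),g(Q))=\sum_{\substack{r\mid ad-bc\\(r,(a,b))=1}}\frac{G(f;g;r;x)}{r}=\sum_{r\mid ad-bc}\frac{G(f;g;r;x)}{r},\]
using the convention $G(r,x)=0$ when $(r,(a,b))>1$, together with the observation that once $x>|ad-bc|$ every admissible $r$ has all its prime factors below $x$. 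Combining with the error estimate of the first paragraph yields the claim. The main obstacle is the clean identification in this last step between the $T_{p,i}$-expansion of $M_p$ and the Euler factors of $G(r,x)/r$; once this dictionary is pinned down, everything else is bookkeeping.
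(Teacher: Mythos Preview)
Your argument is correct, and it follows essentially the same strategy as the paper---specialize Theorem~\ref{intromain} to the linear case---but the execution differs in two places. For the error, the paper simply observes that $|\{n\le x:\exists\,p^k\ge x,\ p^k\mid an+c\}|\ll x/\log x$ and absorbs those $n$ wholesale into the $o(1)$; you instead bound the $N_P(x)$-contribution to $\mathbb D_P$ directly via $1/x\ll 1/p$ and the convergent tail of $\mathbb D(1,f;\infty)$. Both work, though the paper's argument does not actually need the hypothesis $\mathbb D(1,f;\infty)<\infty$ at that step. For the main term, the paper does \emph{not} treat Theorem~\ref{intromain} as a black box: it goes back into the proof and uses the intermediate identity~\eqref{small correlation} together with the explicit evaluation $F(d_1,d_2)=1/[d_1,d_2]$, from which the Euler product for $G_s(r)$ drops out immediately. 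You instead take the output $\prod_{p\le x}M_p(f(P),g(Q))$, compute each local factor in the $T_{p,i}$-form (which is indeed a direct consequence of Lemma~\ref{localcorrelations1} after rewriting in the $\theta,\gamma$-parameters), and then re-expand the product as a divisor sum over $r\mid ad-bc$. Your route is more modular and arguably cleaner, since it avoids re-entering the proof of Theorem~\ref{intromain}; the paper's route is shorter because the divisor sum $\sum_r G(r,x)/r$ is already visible in the intermediate expression and no re-expansion is needed. The ``dictionary'' you flag as the main obstacle is exactly right and is straightforward to verify: dividing the $p^{k}\Vert r$ factor of $G(r,x)$ by $p^k$ gives precisely $T_{p,k}$.
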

 \begin{proof}
 We note that  $$
\left| \{ n\le x \mid \exists p^k \ge x, p^k\vert an+c  \}\right|\ll \frac{x}{\log x}$$ and thus the contribution of terms with large prime power factors can be absorbed into the error term. We can now apply Theorem~\ref{intromain} (using the same notations) with $P(n)=an+c$ and $Q(n)=bn+d$ and note that $\text{res}(P,Q)=ad-bc,$ $\omega_P(p^k)=1$ for $p\nmid a$ and $\omega_P(p^k)=0$ for $p\vert a,$  $\omega_Q(p^k)=1$ for $p\nmid b$ and $\omega_Q(p^k)=0$ for $p\vert b,$ $p^k\le x.$
We are left to note that
\[F(d_1,d_2)=\frac{1}{[d_1,d_2]}\] 
and the terms coming from small primes $p\le y,$ such that $(r,(a,b))=1$ 
\[G_s(r)=\sum_{\substack{d_1,d_2\ge 1\\ (d_1,d_2)=r\\ (d_1,a)=1\\ (d_2,b)=1\\p|rd_i\Rightarrow p\le y}}\frac{\theta_{s}(d_1)\overline{\gamma_{s}(d_2)}}{[d_1,d_2]}\]
each has an Euler product
 \begin{align*}
 G_s(a):=\prod_{p^k\vert|a,\ p\le y}\left(\theta(p^k){\gamma(p^k)}+\delta_b\sum_{i>k}\frac{\theta(p^k){\gamma(p^i)}}{p^{i-k}}+\delta_a\sum_{i> k}\frac{{\gamma(p^k)}\theta(p^i)}{p^{i-k}}\right)\end{align*}
and  $\delta_{\l}=0$ when $p\vert {\l}$ and $\delta_{\l}=1$ otherwise.
\end{proof}
We will require the following extension of Corollary~\ref{linearmain} to all ``pretentious" functions. 
\begin{corollaryn}[\ref{introlinear}]

Let $f,g:\mathbb{N}\to\mathbb{U}$ be multiplicative functions for which $\mathbb{D(}f,n^{it},\infty),$ $\mathbb{D}(g,n^{iu},\infty)<\infty$, and write $f_0(n)=f(n)/n^{it}$ and $g_0(n)=g(n)/n^{iu}$.
Let  $a,b\ge 1$,\ $c,d$ be integers with $(a,c)=(b,d)=1$ and $ad\ne bc$. As above we have
$$
\frac{1}{x}\sum_{n\le x}f(an+c){g(bn+d))}=M_i(f(P),g(Q),x)\prod_{p\le x}M_p(f_0(P),g_0(Q)) +o(1).
$$
We have
\[M_i(f(P),g(Q),x)\sim\frac{a^{it}b^{iu}x^{i(t+u)}}{1+i(t+u)}\cdot\]
If $p|(a,b)$ then $M_p(f_0(P),g_0(Q))=1.$ 
In general, if $p\nmid(a,b)$ we have
\begin{align*}
M_p(f_0(P),g_0(Q))=\sum_{\substack{0\le i\le k,\\k\ge 0, \\p^k\vert| ad-bc}}\left(\frac{\theta(p^i){\gamma(p^i)}}{p^i}+\delta_b\sum_{j\ge i+1}\frac{\theta(p^i){\gamma(p^j)}}{p^{j}}+\delta_a\sum_{j\ge i+1}\frac{{\gamma(p^i)}\theta(p^j)}{p^{j}}\right)
\end{align*}
and  $\delta_{\l}=0$ when $p\vert \l$ and $\delta_{\l}=1$ otherwise. Here $f_0=1*\theta$ and  $g_0=1*\gamma.$

\end{corollaryn}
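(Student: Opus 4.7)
The plan is to reduce to Corollary~\ref{linearmain} by factoring the Archimedean oscillation $n^{it},n^{iu}$ out of $f,g$ and reintroducing it through partial summation with the smooth weight $w(s):=(as+c)^{it}(bs+d)^{iu}$.

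First, I would set $f_0(n)=f(n)/n^{it}$ and $g_0(n)=g(n)/n^{iu}$. By hypothesis $\mathbb{D}(1,f_0;\infty),\mathbb{D}(1,g_0;\infty)<\infty$, so Corollary~\ref{linearmain} applied to $f_0,g_0$ yields
\[
A(x):=\sum_{n\le x}f_0(an+c)g_0(bn+d)=x\,K(x)+o(x),\qquad K(x):=\sum_{r\mid ad-bc}\frac{G(f_0;g_0;r;x)}{r}.
\]
With $P(n)=an+c$ and $Q(n)=bn+d$, the identity $f(P(n))g(Q(n))=w(n)f_0(P(n))g_0(Q(n))$ reduces the task to estimating $\sum_{n\le x}w(n)f_0(P(n))g_0(Q(n))$.

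Second, I would apply partial summation:
\[
\sum_{n\le x}w(n)f_0(P(n))g_0(Q(n))=w(x)A(x)-\int_{1}^{x}w'(s)A(s)\,ds.
\]
Setting $T:=t+u$, one has $w(s)=a^{it}b^{iu}s^{iT}(1+O(1/s))$ and $w'(s)=(iT/s)w(s)(1+O(1/s))$. The finiteness of $\mathbb{D}(1,f_0;\infty)$ and $\mathbb{D}(1,g_0;\infty)$ forces the Euler product for $\prod_p M_p(f_0(P),g_0(Q))$ to converge, so $K(s)$ tends to a limit as $s\to\infty$ and may be replaced by $K(x)$ throughout the range of integration. Substituting $A(s)\approx sK(x)$ and using $\int_1^x s^{iT}\,ds=(x^{1+iT}-1)/(1+iT)$ simplifies the right-hand side to $a^{it}b^{iu}x^{1+iT}K(x)/(1+iT)+o(x)$. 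Dividing by $x$ produces the factor $M_i(f(P),g(Q),x)\sim a^{it}b^{iu}x^{i(t+u)}/(1+i(t+u))$ multiplying $K(x)$.

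Third, the identification $K(x)=\prod_{p\le x}M_p(f_0(P),g_0(Q))$ together with the stated explicit local formula is an algebraic check. Both sides factor multiplicatively in $r$, and for $p\mid(a,b)$ the definition of $G(r,x)$ forces the local factor at $p$ to be $1$. For $p\nmid(a,b)$, expanding $\sum_{0\le i\le v_p(ad-bc)}G(f_0;g_0;p^i;x)/p^i$ reproduces, term by term, the stated double sum, which matches $M_p(f_0(P),g_0(Q))$ as computed via Lemma~\ref{localcorrelations1} using $\omega_P(p^k)=1$ when $p\nmid a$ and $\omega_Q(p^k)=1$ when $p\nmid b$.

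The main obstacle will be executing the partial summation cleanly: since $|w'(s)|\asymp|T|/s$ is not absolutely integrable over $[1,x]$, the $o(s)$ error in $A(s)-sK(s)$ has to be exploited uniformly in $s$, which in turn requires $K(s)\to K(\infty)$ with a quantitative rate. This is precisely where the convergence of the Euler product (and hence the decay of $|K(s)-K(x)|$) genuinely enters.
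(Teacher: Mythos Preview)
Your approach is essentially identical to the paper's: reduce to $f_0,g_0$ via Corollary~\ref{linearmain}, then recover the Archimedean factor by partial summation against $w(s)=(as+c)^{it}(bs+d)^{iu}$, and finally match the Euler factors. The obstacle you flag is exactly the one the paper resolves, by proving the explicit rate $|K(s)-K(x)|\ll\log\bigl(\log x/\log s\bigr)$ from the tail of the Euler product and then checking that $\int_2^x \log(\log x/\log y)\,dy\ll x/\log x=o(x)$; with that estimate in hand your sketch goes through verbatim.
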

\begin{proof}
 We observe $\mathbb{D}(f_0,1,\infty)<\infty$ and $\mathbb{D}(g_0,1,\infty)<\infty$ and let
\[M(x)=\sum_{n\le x}f_0(an+c){g_0(bn+d)}.\]
Corollary~\ref{linearmain} implies
\[M(y)=y\sum_{r\vert ad-bc}\frac{G(f_0;g_0;r;y)}{d}+o(y).\]
Recall that for any $r\ge 1,$ $(r,(a,b))=1$ 
\begin{align*}G(f_0;g_0;r;x)=G(r,x):=\prod_{p^k\vert|r,\ p\le x}\left(\theta(p^k){\gamma(p^k)}+\delta_b\sum_{i> k}\frac{\theta(p^k){\gamma(p^i)}}{p^{i-k}}+\delta_a\sum_{i> k}\frac{{\gamma(p^k)}\theta(p^i)}{p^{i-k}}\right).\end{align*}
Note that $\mathbb{D}(1,f_0,\infty)<\infty$ together with the fact that $\operatorname{Re}{(\theta(p))}\le 0$ imply 
$$-\sum_{p\ge 1}\frac{\operatorname{Re}{(\theta(p))}}{p}<\infty$$
and thus for $y\gg r$ we have 
\[G(r,y)\ll \exp\left(\sum_{p\ge 1}\frac{\operatorname{Re}{(\theta(p))}}{p}+\frac{\operatorname{Re}{({\gamma}(p))}}{p}\right)=O(1).\]
Furthermore, since $\frac{\operatorname{Re}{(\theta(p))}}{p}\le 0$ and $\frac{\operatorname{Re}{(\gamma(p))}}{p}\le 0$ we use~\eqref{iterate1} to estimate
\begin{align}\label{var}
G(r,x)-G(r,y)&=G(r,y)\left[\prod_{y<p\le x}\left(1+\sum_{k\ge 1}\frac{\theta(p^k)}{p^k}+\sum_{k\ge 1}\frac{{\gamma(p^k)}}{p^k}\right)-1\right]\\&\nonumber =G(r,y)\left[\exp\left(\log\sum_{y<p\le x}\left(1+\sum_{k\ge 1}\frac{\theta(p^k)}{p^k}+\sum_{k\ge 1}\frac{{\gamma(p^k)}}{p^k}\right)\right)-1\right]
\\&\nonumber \ll \left|\exp\left(\sum_{y\le p\le x}\frac{ \operatorname{Re}{(\theta(p))}}{p}+\frac{\operatorname{Re}{(\gamma(p))}}{p}\right)\left(1+O\left(\frac{1}{y}\right)\right)-1\right|\\&\nonumber
\ll\left(\sum_{y<p\le x}\frac{1}{p}\right)\ll \log\left(\frac{\log x}{\log y}\right)\cdot
\end{align}
For $(r,(a,b))>1$ we have $G(r,x)=G(r,y)=0$ and~\eqref{var} holds. 
Hence, 
\[\sum_{r\vert ad-bc}\frac{G(r,y)}{r}=\sum_{r\vert ad-bc}\frac{G(r,x)}{r}+O\left(\log\left(\frac{\log x}{\log y}\right)\right)\]
Since
\[M(y)=y\sum_{r|ad-bc}\frac{G(r,y)}{r}+o(y)\]
we have
\[\frac{M(y)}{y}=\frac{M(x)}{x}+O\left(\log\left(\frac{\log x}{\log y}\right)\right).\]
Summation by parts yields
\begin{align*}\label{integral}
\sum_{n\le x}f(an&+c){g(bn+d)}=\sum_{n\ge 1}(an+c)^{it}(bn+d)^{iu}f_0(an+c){g_0(bn+d)}\\&=
\int_{1}^x(ay+c)^{it}(by+d)^{iu}d(M(y))\\&=M(x)(ax+c)^{it}(bx+d)^{iu}-\int_{1}^xM(y)\left[(ay+c)^{it}(by+d)^{iu}\right]'dy\\&
=M(x)(ax+c)^{it}(bx+d)^{iu}-\frac{1}{x}\int_{1}^xM(x)y\left[(ay+c)^{it}(by+d)^{iu}\right]'dy \\&+O\left(\int_{2}^xy\log\left(\frac{\log x}{\log y}\right)\left|\left[(ay+c)^{it}(by+d)^{iu}\right]'\right|dy\right)\\&=
\frac{M(x)}{x}\int_{2}^x(ay+c)^{it}(by+d)^{iu}dy\\&+O\left(\int_{2}^xy\log\left(\frac{\log x}{\log y}\right)\left|\left[(ay+c)^{it}(by+d)^{itu}\right]'\right|dy\right)
\end{align*}
Note,
\[y\left|\left[(ay+c)^{it}(by+d)^{iu}\right]'\right|\ll \frac{y}{ay+c}+\frac{y}{by+d}=O(1),\]
and so the error term is bounded by 
\[\int_{2}^x\log\left(\frac{\log x}{\log y}\right)dy\ll \frac{x}{\log x}=o(x).\]
Since $|(ay+c)^{it}-(ay)^{it}|=O\left(\frac{t}{y}\right),$ we have 
\[\int_{2}^x(ay+c)^{it}(by+d)^{iu}dy=\int_2^x(ay)^{it}(by)^{iu}dy+o(x).\]
Evaluating the last integral and performing simple manipulations with the Euler factors we conclude
\[\sum_{r\vert ad-bc}\frac{G(f_0;g_0;r;x)}{r}=\prod_{p\le x}M_p(f_0(P),g_0(Q)) +o(1)\]
and the result follows.
\end{proof}

\begin{remark}
Let $f_k(n),$ $k=\overline{1,m}$ be multiplicative functions such that $|f_k(n)|\le 1$ and $\mathbb{D}(f_k(n),n^{it_k};\infty)<\infty$ for all $n\in\mathbb{N}.$ Following the lines of the proof one can generalize Corollary~\ref{introlinear} to compute correlations of the form 
\[
\sum_{n\le x}f_1(a_1n+b_1)f_2(a_2n+b_2)\cdot \dots \cdot f_m(a_mn+b_m).\]
\end{remark}

Finally, we will require the following special case of Corrolary~\ref{linearmain}.
\begin{corollary}\label{relation}
Let $f:\mathbb{N}\to\mathbb{U}$ be a multiplicative function such that $\mathbb{D}(1,f;\infty)<\infty,$ $m\in\mathbb{N}.$ Then,
\[\frac{1}{x}\sum_{n\ge 1}f(n)\overline{f(n+m)}=\sum_{r\vert m}\frac{G_0(r)}{r}+o(1)\]
where $f=1*\theta$ and
\[G_0(r):=\prod_{p^k||r}\left(|\theta(p^k)|^2+2\sum_{i> k}\frac{\operatorname{Re}{(\theta(p^k)\overline{\theta(p^i)}}}{p^{i-k}}\right).\]
\end{corollary}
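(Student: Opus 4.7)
The plan is to derive this as a direct specialization of Corollary~\ref{linearmain}. I would set $a=b=1$, $c=0$, $d=m$, and take $g=\overline{f}$. Then $(a,c)=(b,d)=1$ and $ad-bc=m\ne 0$, so the hypotheses of Corollary~\ref{linearmain} are satisfied; note that $\mathbb{D}(1,\overline{f};\infty)=\mathbb{D}(1,f;\infty)<\infty$ since the ``distance" is invariant under complex conjugation (the summand is real).

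First I would record the Dirichlet decompositions: if $f=1*\theta$, then $\overline{f}=1*\overline{\theta}$, so in the notation of Corollary~\ref{linearmain} we have $\gamma=\overline{\theta}$. Since $a=b=1$, no prime divides $a$ or $b$, so $\delta_a=\delta_b=1$ at every prime, and the auxiliary condition $(r,(a,b))=(r,1)=1$ is trivially met for every divisor $r$ of $m$. Corollary~\ref{linearmain} then reads
\[\frac{1}{x}\sum_{n\le x}f(n)\overline{f(n+m)}=\sum_{r\mid m}\frac{G(f;\overline{f};r;x)}{r}+o(1).\]

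Next I would unpack the Euler factors. For each $p^k\| r$, the local factor in $G(f;\overline{f};r;x)$ is
\[\theta(p^k)\overline{\theta(p^k)}+\sum_{i>k}\frac{\theta(p^k)\overline{\theta(p^i)}}{p^{i-k}}+\sum_{i>k}\frac{\overline{\theta(p^k)}\theta(p^i)}{p^{i-k}}=|\theta(p^k)|^2+2\sum_{i>k}\frac{\operatorname{Re}\bigl(\theta(p^k)\overline{\theta(p^i)}\bigr)}{p^{i-k}},\]
which is precisely the factor defining $G_0(r)$ at $p^k\| r$.

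Finally, since each $r\mid m$ is bounded, only finitely many primes appear in the product defining $G(f;\overline{f};r;x)$: once $x$ exceeds the largest prime factor of $m$, the product is taken over a fixed finite set of primes. The inner sums over $i>k$ converge absolutely because $|\theta(p^i)|\le 2$ and the $1/p^{i-k}$ provides geometric decay, so $G(f;\overline{f};r;x)=G_0(r)$ for all sufficiently large $x$. Substituting yields the claimed identity. There is no serious obstacle here; the only point requiring care is verifying the identification $\gamma=\overline{\theta}$ and that $\delta_a=\delta_b=1$ throughout, so that the general formula of Corollary~\ref{linearmain} collapses to the symmetric form $G_0(r)$.
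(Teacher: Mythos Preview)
Your specialization of Corollary~\ref{linearmain} and the identification $\gamma=\overline{\theta}$, $\delta_a=\delta_b=1$ are exactly what the paper does (the paper takes $c=m$, $d=0$ rather than your $c=0$, $d=m$, which is immaterial). The computation of the local factor at $p^k\Vert r$ is also correct.

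The gap is in your last paragraph. You have misread the convention governing $G(f;g;r;x)$: the paper explicitly remarks, right after defining $G(r,x)$, that ``we allow $k=0$'' in the product $\prod_{p^k\Vert r,\ p\le x}$. In other words, the product runs over \emph{all} primes $p\le x$, not just the primes dividing $r$; for $p\nmid r$ one takes the $k=0$ factor
\[
1+\sum_{i\ge 1}\frac{\theta(p^i)}{p^i}+\sum_{i\ge 1}\frac{\overline{\theta(p^i)}}{p^i}
=1+2\sum_{i\ge 1}\frac{\operatorname{Re}\,\overline{\theta(p^i)}}{p^i}.
\]
So $G(f;\overline{f};r;x)$ does \emph{not} stabilize once $x$ exceeds the largest prime factor of $m$; it is a genuinely $x$-dependent partial Euler product, and $G_0(r)$ is the corresponding infinite product over all primes. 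The step you are missing is precisely the one the paper supplies: one must show that the tail
\[
\prod_{p>x}\Bigl(1+2\sum_{i\ge 1}\frac{\operatorname{Re}\,\overline{\theta(p^i)}}{p^i}\Bigr)\longrightarrow 1,
\]
which follows from $\mathbb{D}(1,f;\infty)<\infty$ since $\operatorname{Re}\,\theta(p)=\operatorname{Re}\,f(p)-1$ and hence $\sum_p |\operatorname{Re}\,\theta(p)|/p<\infty$. Once you add this convergence argument, your proof coincides with the paper's.
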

\begin{proof}
We apply Corollary~\ref{linearmain} with $g=\overline{f},$ $a=b=1,$ $d=0,$ $c=m$ and observe 
\begin{align*}
\prod_{p>x}\left(|\theta(p^k)|^2+2\sum_{i> k}\frac{\operatorname{Re}{(\theta(p^k)\overline{\theta(p^i)})}}{p^{i-k}}\right)=\prod_{p>x}\left(1+2\sum_{i\ge 1}\frac{\operatorname{Re}{(\overline{\theta(p^i)})}}{p^{i}}\right)\to 1.
\end{align*}
Hence, the Euler factors
\[G(a):=\prod_{p^k||a,\ p\le x}\left(|\theta(p^k)|^2+2\sum_{i> k}\frac{\operatorname{Re}{(\theta(p^k)\overline{\theta(p^i)})}}{p^{i-k}}\right)\]
converge to 
\[G_0(a):=\prod_{p^k||a}\left(|\theta(p^k)|^2+2\sum_{i>k}\frac{\operatorname{Re}{(\theta(p^k)\overline{\theta(p^i)})}}{p^{i-k}}\right).\]
\end{proof}
Let $f$ be a multiplicative function such that $|f(n)|\le 1$ and $\mathbb{D}(f(n),n^{it}\chi(n);\infty)<\infty$ for some $t\in\mathbb{R}$ where $\chi$ is a primitive character of conductor $q.$ We define $F$ to be the multiplicative function such that
\begin{equation}\label{twist}
 F(p^k) = \begin{cases} f(p^k)\overline{\chi(p^k)}p^{-ikt}, & \mbox{if } p\nmid q \\ 1, & \mbox{if } p\mid q,\end{cases}
 \end{equation}
and
 $$M_p(F,\overline{F};d)=\lim_{x\to\infty}\frac{1}{x}\sum_{n\le x}F_p(n)\overline{F_p(n+d)}.$$
 We are now ready to establish the formula for correlations when $f$ ``pretends"  to be a modulated character.
\begin{theoremn}[\ref{charactercor}]
Let $f:\mathbb{N}\to\mathbb{U}$ be a multiplicative function such that $\mathbb{D}(f(n),n^{it}\chi(n);\infty)<\infty$ for some $t\in\mathbb{R}$ and $\chi$ is a primitive character of conductor $q.$ Then for any non-zero integer $d$ we have 

\[\frac{1}{x}\sum_{n\le x}f(n)\overline{f(n+d)}=\prod_{\substack{p\le x\\ p\nmid q}}M_p(F,\overline{F};d)\prod_{p^{\l\vert| q}}M_{p^{\l}}(f,\overline{f},d)+o(1),\]
where
\[M_{p^{\l}}(f,\overline{f},d) = \begin{cases} 0, & \mbox{if } p^{\l-1}\nmid d \\ 1-\frac{1}{p}, & \mbox{if } p^{\l-1}\vert| d \\ \left(1 -\frac 1p \right)  \sum_{j=0}^{k} \frac{|f(p^j)|^2}{p^j} -  \frac{|f(p^k)|^2}{p^{k}}, & \mbox{if } p^{\l+k}\vert| d\end{cases}\]
for any $k\ge 0$ and if $p^n\vert| d,$ then 
\[M_p(F,\overline{F},d)=1-\frac{2}{p^{n+1}}+\left(1-\frac{1}{p}\right)\sum_{j>n}\left(\frac{F(p^n)\overline{F(p^j)}}{p^j}+\frac{\overline{F(p^n)}F(p^j)}{p^j}\right).\]
In particular, the mean value is $o(1)$ if $q\nmid d\prod_{p\vert q}p.$
\end{theoremn}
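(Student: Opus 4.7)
The plan is to reduce the computation to Corollary~\ref{linearmain} applied to $F$ on arithmetic progressions. Crucially, $\mathbb{D}(F,1;\infty) < \infty$ by construction: on primes $p \nmid q$ the twist by $\overline{\chi(p)}p^{-it}$ converts the pretending hypothesis into $\sum_{p \nmid q} (1 - \operatorname{Re} F(p))/p = \mathbb{D}(f, \chi(n) n^{it}; \infty)^2 + O(1) < \infty$, while on primes $p \mid q$ we have set $F(p^k) = 1$.

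First I decompose each $n$ as $n = am$, where $a = \gcd(n, q^\infty)$ is the $q$-part and $(m,q) = 1$, and similarly $n + d = bm'$. Multiplicativity together with $f(m) = F(m)\chi(m)m^{it}$ (valid on the coprime-to-$q$ part) and the identities $F(n) = F(m)$, $F(n+d) = F(m')$ yields the pointwise factorization
\[
f(n)\overline{f(n+d)} = f(a)\overline{f(b)}\,F(n)\overline{F(n+d)}\,\chi(m)\overline{\chi(m')}\,(b/a)^{it}\bigl(n/(n+d)\bigr)^{it}.
\]
The last factor equals $1 + O(|t|d/n)$ and contributes negligibly after averaging.

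I now split the sum over $n \le x$ according to the pair $(a, b)$ of admissible $q$-smooth integers, where admissibility is dictated by the compatibility of $v_p(n)$, $v_p(n+d)$ and $v_p(d)$ at each $p \mid q$. For fixed $(a,b)$, the conditions $\gcd(n,q^\infty) = a$ and $\gcd(n+d,q^\infty) = b$ select residue classes $n \equiv r \pmod{M}$ with $M = \prod_{p \mid q} p^{\max(v_p(a), v_p(b)) + 1}$, and on each class the characters $\chi(m)\overline{\chi(m')}$ become constants. Setting $M_1 = M/a$, $M_2 = M/b$, $r_1 = r/a$, $r_2 = (r+d)/b$, the inner correlation becomes $\sum_k F(M_1 k + r_1)\overline{F(M_2 k + r_2)}$ with $(r_j, M_j) = 1$ and resultant $M_1 r_2 - M_2 r_1 = Md/(ab) \ne 0$. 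Corollary~\ref{linearmain} evaluates this sum, and after summing over divisors of $Md/(ab)$ the Euler factors at primes $p \nmid q$ collect into $\prod_{p \le x,\, p \nmid q} M_p(F, \overline{F}; d)$, independent of $(a,b)$.

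What remains is a finite sum over $(a, b)$ and allowed residues that factorizes by the Chinese Remainder Theorem into a product of local contributions at primes $p \mid q$; matching these against the displayed piecewise formula for $M_{p^\ell}(f, \overline{f}, d)$ is the heart of the argument. Writing $\delta = v_p(d)$ and using the primitivity of $\chi$, a local character sum identity shows that when $\delta < \ell - 1$ the contribution at $p$ vanishes by orthogonality (so $M_{p^\ell} = 0$), when $\delta = \ell - 1$ only the pair $v_p(a) = v_p(b) = 0$ survives giving density $1 - 1/p$, and when $\delta = \ell + k$ the admissible pairs with $\max(v_p(a), v_p(b)) \le k$ combine with the character evaluations to produce the piecewise expression involving $|f(p^j)|^2$. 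The main obstacle is precisely this case analysis at primes dividing $q$: the interplay between character orthogonality and the valuation constraints demands a careful separation into the three regimes according to $\delta$ versus $\ell$.
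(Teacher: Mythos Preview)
Your approach is essentially the paper's: split off the $q$-parts of $n$ and $n+d$, apply Corollary~\ref{linearmain} to the resulting $F$-correlation (whose main term depends only on the resultant $Md/(ab)$ and hence not on the particular residue class), and handle the primes dividing $q$ via a character-sum computation exploiting the primitivity of~$\chi$. One technical slip: your modulus $M=\prod_{p\mid q}p^{\max(v_p(a),v_p(b))+1}$ is too small to make $\chi(m)$ and $\chi(m')$ constant on each residue class when $q$ is not squarefree (you would need $q\mid M/a$ and $q\mid M/b$); the paper instead refines to modulus $q\cdot\mathrm{lcm}(a,b)$, and then proves as a separate step that the resulting character sum over residues modulo $q$ vanishes unless the two $q$-parts agree (your $a=b$), after which the three-case evaluation at each $p^\ell\|q$ proceeds as you outline.
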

\begin{proof}
We partition the sum according to $r,s\ge 1$ such that $r\vert n$ and $\text{rad}(r)\vert q,$ $(n/r,q)=1$ and $s\vert (n+d)$ and $\text{rad}(s)\vert q,$ $((n+d)/s,q)=1.$ Note that $(r,s)\vert d.$ We write $$n=m\cdot\text{lcm}(r,s)+rb(r)$$ such that $sb(s)-rb(r)=d$ for some integers $b(r),b(s).$ The can now be rewritten as
\[\sum_{n\le x}f(n)\overline{f(n+d)}=\sum_{r,s}f(r)\overline{f(s)}\sum_{\substack{m^*\le \frac{x}{\text{lcm}(r,s)}\\ }}f\left(m^*\frac{s}{(r,s)}+b(r)\right)\overline{f\left(m^*\frac{r}{(r,s)}+b(s)\right)}\]
where the inner sum runs over $m^*$ such that
 $$\left(m^*\frac{s}{(r,s)}+b(r),q\right)=1$$ and
$$\left(m^*\frac{r}{(r,s)}+b(s),q\right)=1.$$ We can therefore define the function $f_1$  such that $f_1(p^k)=f(p^k)$ for all primes $p\nmid q$ and $f_1(p^k)=0$ otherwise. In this case, Corollary~\ref{introlinear} implies
 \begin{align}\label{charactersum1}
\sum_{\substack{m^*\le \frac{x}{\text{lcm}(r,s)}\\ }}f\left(m^*\frac{s}{(r,s)}+b(r)\right)&\overline{f\left(m^*\frac{r}{(r,s)}+b(s)\right)}\\&\nonumber=\sum_{\substack{m\le \frac{x}{\text{lcm}(r,s)}\\ }}f_1\left(m\frac{s}{(r,s)}+b(r)\right)\overline{f_1\left(m\frac{r}{(r,s)}+b(s)\right)}
  \end{align}
  where now $m$ runs over all integers up to $\frac{x}{\text{lcm}(r,s)}.$
 We can now factor $f_1(n)=\chi(n)F(n).$ Note $\mathbb{D}(F,1,\infty)< \infty.$ Let $m=kq+a$ where $a$ runs over residue classes $\text{mod}(q).$ The sum in~\eqref{charactersum1} can be rewritten
 \begin{align*} \sum_{r,s}f(r)g(s)&\sum_{a\ \text{mod}(q)}\chi\left(a\frac{s}{(r,s)}+b(r)\right)\overline{\chi\left(a\frac{r}{(s,r)}+b(s)\right)}\\&\times\sum_{k\le \frac{x}{q\text{lcm}(r,s)} }F\left(kq\frac{s}{(r,s)}+a\frac{s}{(r,s)}+b(r)\right)\overline{F\left(kq\frac{r}{(r,s)}+a\frac{r}{(r,s)}+b(s)\right)}.
 \end{align*}
We apply Corollary~\ref{introlinear} to the inner sum and  observe that
$$a_2b_1-a_1b_2=\frac{dq}{(r,s)}$$ and the asymptotic in Corollary~\ref{introlinear}  does not depend on $b_1,b_2$ and consequently on the residue class $a(\text{mod}(q)).$ Hence, up to a small error the innermost sum is equal to 
\[\sum_{m\le \frac{x}{q[s,r]}}F\left(m\frac{s}{(r,s)}+b(r)\right)\overline{F\left(m\frac{r}{(r,s)}+b(s)\right)}.\]
We now focus on the sum
\begin{equation}\label{charactersum2}
\sum_{a\ \text{mod}(q)}\chi\left(a\frac{s}{(r,s)}+b(r)\right)\overline{\chi\left(a\frac{r}{(s,r)}+b(s)\right)}.\end{equation}
Let $q=p_1^{a_1}p_2^{a_2}...p_k^{a_k}$ and $\chi=\chi_{p_1^{a_1}}\chi_{p^{a_2}}\cdot ... \cdot \chi_{p_k^{a_k}},$ where each $\chi_{p_i^{a_i}}$ is a primitive character of conductor $p_i^{a_i}.$ By the Chinese Reminder Theorem the sum~\eqref{charactersum2} equals 
\begin{align*}
\sum_{a\ \text{mod}(q)}\chi\left(a\frac{s}{(r,s)}+b(r)\right)&\overline{\chi\left(a\frac{r}{(s,r)}+b(s)\right)}\\&\nonumber=\prod_{p^k\vert| q}\sum_{a\ \text{mod}(p^k)}\chi_{p^k}\left(a\frac{s}{(r,s)}+b(r)\right)\overline{\chi_{p^k}\left(a\frac{r}{(s,r)}+b(s)\right)}.\end{align*}
We claim that the last sum is zero unless $r=s.$ Indeed, if $r\ne s,$ then there exists prime $p$ such that $p^i\vert |r$ and $p^j\vert |s$ for $j>i.$ Since $(r/(r,s),p)=1$ we can make change of variables $$a\to \frac{ar}{(r,s)}\text(mod(p^k))$$ and the $p-$th factor can rewritten
  \[\sum_{a\ \text{mod}(p^k)}\chi_{p^k}(ap^{j-i}t+b_1(r))\overline{\chi_{p^k}(a+b_1(s))}\]
  where $(t,p)=1.$ If $j-i\ge k,$ then the first term is fixed and the second runs over all residues modulo $p^k.$ So the sum is zero. 
  If $j-i<k,$ we write $a=A+p^{k-(j-i)}L$ where $A$ runs over residues  $\text{mod}(p^{k-(j-l)})$ and $L$ runs over residues modulo $p^{j-i}.$
  Then our sum becomes
  \[\sum_{A\ \text{mod}(p^{k-(j-l)})}\chi_{p^k}(Ap^{j-i}t+b_1(r))\sum_{L\ \text{mod}p^{j-i}}\overline{\chi_{p^k}(A+b_1(s)+p^{k-j+i}L)}\]

It is easy to show that the inner sum
\[\sum_{L\ \text{mod}p^{j-i}}\overline{\chi(A+b_1(s)+p^{k-j+i}L)}=0.\]
Thus the main contribution comes from the terms $r=s=R.$ In this case we have $R(b(s)-b(r))=d=bR$ and we can take $b(r)=0,$ $b(s)=b.$ Our character sum then can be rewritten as
\[\sum_{a\ \text{mod}(q)}\chi(a)\overline{\chi(a+b)}.\]
To evaluate the last sum, we split it into prime powers. Now if $p^k\vert| q$ and $p^{i}\vert| b$ (possibly $i=0$) then we have nonzero contribution if and only if $i\ge k-1.$ Indeed, let $b=p^{i}b_1,$
$(b_1,p)=1.$ We note  

\[\sum_{a\ \text{mod}(p^k)}\chi_{p^k}(a)\overline{\chi_{p^k}(a+b)}=\sum_{\substack{c\ \text{mod}(p^k),\\ (c,p)=1}}\chi_{p^k}(p^ic+1).\]
This sum is $0$ if $i\le k-2$ and equals to $-p^{k-1}$ whenever 
$i=k-1$ and $\phi(p^k)$ whenever $i\ge k.$
We thus have 
\[\sum_{a\ \text{mod}(q)}\chi(a)\overline{\chi(a+b)}=\prod_{\substack{p^k\vert| q\\ p^i\vert| b\\ i\le k-1}}\mu(p^{k-i})p^i\prod_{\substack{p^k\vert| q\\ p^{k}\vert b}}\phi(p^{k})\] and the result follows by combining this with Corollary~\ref{introlinear} and easy manipulations with the Euler products.
\end{proof}
Combining the last proposition with Corollary~\ref{relation} we deduce 
\begin{corollary}\label{keytotao}
Let $f$ be a multiplicative function such that $|f(n)|\le 1,$ $\mathbb{D}(f(n),n^{it}\chi(n);\infty)<\infty$ for some primitive character $\chi$ of conductor $q.$ Then
\begin{align*}
\frac{1}{x}\sum_{n\le x}f(n)\overline{f(n+1)}&=\frac{\mu(q)}{q}
\prod_{\substack{p\ge 1\\ p\nmid q}}\left(2\operatorname{Re}\left(1-\frac{1}{p}\right)\left(\sum_{k\ge 0}\frac{f(p^k)\overline{\chi(p^k)}p^{-ikt}}{p^k}\right)-1\right)+o(1).
\end{align*}
\end{corollary}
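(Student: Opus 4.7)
The plan is to specialize Theorem~\ref{charactercor} to $d=1$ and then explicitly evaluate each of the Euler factors that appear. For every prime $p$ one has $p^0\vert|1$, so applying the formula from Theorem~\ref{charactercor} with $n=0$ to primes $p\nmid q$ gives
\[
M_p(F,\overline{F};1)=1-\frac{2}{p}+\left(1-\frac{1}{p}\right)\sum_{j\ge 1}\frac{F(p^j)+\overline{F(p^j)}}{p^j}.
\]
Using $F(1)=1$ to fold the $k=0$ term into the series, this rearranges to $2\operatorname{Re}\bigl((1-1/p)\sum_{k\ge 0}F(p^k)/p^k\bigr)-1$, which is precisely the Euler factor indexed by $p\nmid q$ in the statement of Corollary~\ref{keytotao}. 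Since $\mathbb{D}(1,F;\infty)<\infty$, this product converges absolutely, so truncating at $p\le x$ versus extending to all primes costs only $o(1)$; this extension is also where Corollary~\ref{relation} (applied to the auxiliary function $F$) can be invoked as a cross-check.

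For primes $p\mid q$, Theorem~\ref{charactercor} immediately gives $M_{p^\ell}(f,\overline{f},1)=0$ whenever $\ell\ge 2$, so if $q$ is not squarefree both sides vanish (matching $\mu(q)=0$). When $q$ is squarefree, I would revisit the character-sum identity derived inside the proof of Theorem~\ref{charactercor}: since $p^0\vert|1$ for every $p\vert|q$, the displayed formula for $\sum_{a\,(\operatorname{mod} q)}\chi(a)\overline{\chi(a+1)}$ specializes to $\prod_{p\vert|q}\mu(p)=\mu(q)$. Combining this with the density factor $1/q$ coming from the partition of $n$ into residue classes modulo $q$ produces the coefficient $\mu(q)/q$ appearing in the statement of Corollary~\ref{keytotao}.

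The main obstacle is the bookkeeping at primes dividing $q$: one must reconcile the nominal local factor given by the tabulated formula for $M_{p^\ell}(f,\overline{f},1)$ in Theorem~\ref{charactercor} with the way the character sum and the residue-class density enter, so that the coefficient comes out to $\mu(q)/q$ rather than $\phi(q)/q$ or some mixture of the two. The cleanest route seems to be to unfold $M_{p^\ell}(f,\overline{f},1)$ back to the corresponding sum used in the proof of Theorem~\ref{charactercor}, treating the $p\vert q$ contribution as a single multiplicative factor coming from the character sum; one can then multiply the Euler factors across all primes, together with the $p\nmid q$ computation above, to obtain the asserted identity.
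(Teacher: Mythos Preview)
Your approach is correct and is essentially the same as the paper's: the paper simply says the corollary follows by combining Theorem~\ref{charactercor} (specialized to $d=1$) with Corollary~\ref{relation}, and you carry out exactly that specialization, evaluating the $p\nmid q$ Euler factors explicitly and handling the $p\mid q$ contribution via the character sum from inside the proof of Theorem~\ref{charactercor}. Your observation that the tabulated value $M_{p^\ell}(f,\overline f,1)=1-1/p$ would naively give $\phi(q)/q$ rather than $\mu(q)/q$ is well taken; unfolding back to the character sum $\sum_{a\ (\mathrm{mod}\ q)}\chi(a)\overline{\chi(a+1)}=\mu(q)$ together with the density $1/q$, as you propose, is precisely the right way to recover the stated coefficient.
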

We remark that using the same arguments one may establish the formula for the correlations
\[\sum_{n\le x}f(n)g(n+m)\]
for  $\mathbb{D}(f(n),n^{it_1}\chi(n),\infty)<\infty$ and $\mathbb{D}(g(n),n^{it_2}\psi(n),\infty)<\infty.$ We state here one particular case when $m=1.$
\begin{prop}
Let $f,g:\mathbb{N}\to\mathbb{U}$ be two multiplicative functions such that  $\mathbb{D}(f(n),n^{it_1}\chi(n),\infty)<\infty$ and $\mathbb{D}(g(n),n^{it_2}\psi(n),\infty)<\infty$ for some primitive characters $\chi,\psi.$ Let $R=\frac{q_{\psi}}{(q_{\chi},q_{\psi})}$ and $S=\frac{q_{\chi}}{(q_{\chi},q_{\psi})},$ $Q=[q_{\chi},q_{\psi}].$ Then
\begin{align*}
\frac{1}{x}\sum_{n\le x}f(n)&g(n+1)=\frac{R^{it_1}S^{it_2}}{i(t_1+t_2)+1}f(R)g(S)\sum_{a\ \text{mod}(Q)}\chi(aS+b(R))\psi(aR+b(S))\\&\times
\prod_{\substack{p\le x\\ p\nmid Q}}\left(\left(1-\frac{1}{p}\right)\left(\sum_{k\ge 0}\frac{f(p^k)p^{-ikt_1}}{p^k}\right)+\left(1-\frac{1}{p}\right)\left(\sum_{k\ge 0}\frac{{g(p^k)}p^{-ikt_2}}{p^k}\right)-1\right)+o(1).
\end{align*}
\end{prop}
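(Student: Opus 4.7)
The plan is to mirror the proof of Theorem~\ref{charactercor}, with two modifications: we now have two distinct ``pretending'' functions $f$ and $g$ with (possibly) distinct primitive characters $\chi,\psi$, and the shift is the specific value $m=1$. First I would partition the range according to the largest divisor $r$ of $n$ whose radical divides $q_\chi$, and the largest divisor $s$ of $n+1$ whose radical divides $q_\psi$. Since $\gcd(n,n+1)=1$ we automatically have $\gcd(r,s)=1$. For each admissible pair $(r,s)$ the Chinese Remainder Theorem pins down $n\pmod{rs}$, giving $n=rsk+n_0(r,s)$, and the multiplicativity of $f,g$ yields $f(n)=f(r)\,f_1(sk+\alpha(r,s))$ and $g(n+1)=g(s)\,g_1(rk+\beta(r,s))$, where $f_1,g_1$ agree with $f,g$ on integers coprime to $q_\chi$ (resp.\ $q_\psi$) and vanish otherwise.

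Next, on each residue class $a\pmod Q$ with $Q=[q_\chi,q_\psi]$ I would factor $f_1(\cdot)=\chi(\cdot)(\cdot)^{it_1}F(\cdot)$ and $g_1(\cdot)=\psi(\cdot)(\cdot)^{it_2}G(\cdot)$ where $F,G$ are the pretentious pieces defined as in~\eqref{twist}. Writing the variable $k=Qj+a$, the character and oscillatory factors pull out of the inner sum over $j$, and Corollary~\ref{introlinear} evaluates the remaining sum over the two linear forms in $j$ applied to $F$ and $G$. A key observation, already used in the proof of Theorem~\ref{charactercor}, is that the asymptotic furnished by Corollary~\ref{introlinear} is insensitive to the specific residue $a\pmod Q$: it depends only on the leading coefficients and the resultant of the two linear forms, so the character sum cleanly factors out.

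The heart of the argument is then to show that the resulting character sum
\[
\sum_{a\ \bmod Q}\chi\bigl(aS+b(R)\bigr)\psi\bigl(aR+b(S)\bigr)
\]
vanishes unless $(r,s)=(R,S)$ with $R=q_\psi/(q_\chi,q_\psi)$ and $S=q_\chi/(q_\chi,q_\psi)$. By the Chinese Remainder Theorem, this reduces, prime by prime, to local sums of the shape $\sum_{a\bmod p^k}\chi_{p^k}(au+\alpha)\psi_{p^k}(av+\beta)$; a linear change of variables combined with the primitivity of each local component, exactly as in the proof of Theorem~\ref{charactercor}, kills every configuration except the one in which the $p$-adic valuations of $r$ and $s$ match the conductor structure of $\chi$ and $\psi$. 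That unique configuration is $r=R$, $s=S$.

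Finally, I would assemble the surviving main term: the factors $f(R)g(S)$ and the Euler product over $p\nmid Q$ come from Corollary~\ref{introlinear} applied to $F$ and $G$, while the prefactor $R^{it_1}S^{it_2}/(1+i(t_1+t_2))$ emerges from the partial summation device at the end of the proof of Corollary~\ref{introlinear}, applied now to the linear forms $Rm+b(R)$ and $Sm+b(S)$ with the twist $n^{it_1}(n+1)^{it_2}\sim R^{it_1}S^{it_2}m^{i(t_1+t_2)}$. I expect the main technical obstacle to be the local character-sum cancellation of step~3: tracking all cases according to $v_p(r),v_p(s),v_p(q_\chi),v_p(q_\psi)$ is delicate, but each case reduces, via an appropriate substitution, to a sum that either manifestly vanishes or concentrates precisely at the $(R,S)$ predicted by the formula.
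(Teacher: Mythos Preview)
Your overall strategy coincides with the paper's: it simply says to follow the proof of Theorem~\ref{charactercor}, note that the shift $m=1$ forces $(r,s)=1$, and that the only surviving pair is $(r,s)=(R,S)$. However, your specific partition has a genuine gap. You take $r$ to be the $q_\chi$-part of $n$ and $s$ to be the $q_\psi$-part of $n+1$, so $\text{rad}(r)\mid q_\chi$ and $\text{rad}(s)\mid q_\psi$. But then $r$ can \emph{never} equal $R=q_\psi/(q_\chi,q_\psi)$ unless every prime of $R$ already divides $q_\chi$: if $q_\chi=3$ and $q_\psi=5$, for instance, then $R=5$ and $S=3$, which lie outside your admissible ranges for $r$ and $s$ respectively. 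Worse, with your partition the local character sum at any prime $p\mid q_\chi$ with $p\nmid q_\psi$ is $\sum_{a\bmod p^{v_p(q_\chi)}}\chi_p(sa+\alpha)$; since $\text{rad}(s)\mid q_\psi$ forces $p\nmid s$, the argument $sa+\alpha$ runs over a full residue system and the sum vanishes. Thus your main term would be identically zero whenever $q_\chi$ has a prime factor not dividing $q_\psi$, contradicting the stated formula.

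The fix is to partition by the $Q$-part with $Q=[q_\chi,q_\psi]$: let $r$ be the largest divisor of $n$ with $\text{rad}(r)\mid Q$ and $s$ the largest divisor of $n+1$ with $\text{rad}(s)\mid Q$. Then $(n/r,Q)=1$ and $((n+1)/s,Q)=1$, so both character factorizations $f_1=\chi\cdot n^{it_1}F$ and $g_1=\psi\cdot n^{it_2}G$ are legitimate, and now $r,s$ range over integers supported on the primes of $Q$, which permits $(r,s)=(R,S)$. The Euler product in the proposition running over $p\nmid Q$ (rather than $p\nmid q_\chi$ or $p\nmid q_\psi$ separately) is the telltale sign that this is the intended decomposition. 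With this corrected partition, your steps 2--5 go through as written and match the paper's argument.
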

\begin{proof} We follow the lines of the proof of Proposition~\ref{charactercor} and note that in this case $(r,s)=1$ and the only term that contributes is 
\[r=R=\frac{q_{\psi}}{(q_{\chi},q_{\psi})}\] and $$s=S=\frac{q_{\chi}}{(q_{\chi},q_{\psi})}\cdot$$
\end{proof} 
\end{section}
\begin{section}{Application to the Erd\H{o}s-Coons-Tao conjecture}
In this sections we are going to study multiplicative functions $f:\mathbb{N}\to\mathbb{T},$ such that
\begin{equation}\label{boundedsum1}
\limsup_{x\to\infty}|\sum_{n\le x}f(n)|<\infty.
\end{equation}
We first focus on the complex valued case and the proof of Theorem~\ref{complexcor}.
The key tool is the following recent result by Tao~\cite{MR4}.
\begin{theorem}\label{averagedelliot}{\bf [Tao]}
Let $a_1,a_2$ be natural numbers, and let $b_1,b_2$ be integers such that $a_1b_2-a_2b_1\ne 0.$ Let $\varepsilon>0,$ and suppose that $A$ is sufficiently large depending on $\varepsilon,a_1,a_2,b_1,b_2.$ Let $x\ge\omega\ge A,$ and let $g_1,g_2:\mathbb{N}\to\mathbb{U}$ be multiplicative functions with $g_1$ non-pretentious in the sense that
\[\sum_{p\le x}\frac{1-\operatorname{Re}(f(p)\chi(p)p^{it})}{p}\ge A\]
for all Dirichlet character $\chi$ of period at most $A,$ and all real numbers $|t|\le Ax.$ Then
\[\left| \sum_{x/\omega<n\le x}\frac{g_1(a_1n+b_1)g_2(a_2n+b_2)}{n}\right|\le \varepsilon \log{\omega}.\]
\end{theorem}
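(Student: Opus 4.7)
The plan is to follow Tao's entropy decrement strategy from \cite{MR4}. The proof proceeds by contradiction: assume the weighted correlation
\[
\frac{1}{\log\omega}\sum_{x/\omega<n\le x}\frac{g_1(a_1n+b_1)g_2(a_2n+b_2)}{n}
\]
exceeds some fixed $\eta>0$ along a sequence of scales $x\to\infty$, and aim to conclude that $g_1$ must pretend to be $\chi(n)n^{it}$ for some Dirichlet character $\chi$ of bounded conductor and some real $t$ of moderate size, contradicting the non-pretentiousness hypothesis.

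The first reduction uses the Matom\"aki--Radziwi\l\l{} theorem~\cite{MR5}, which guarantees that a bounded multiplicative function is essentially constant on almost every short interval of length tending (slowly) to infinity. Applying it to $g_1$ and $g_2$ replaces the harmonically weighted long sum by averages of short-interval correlations, reducing the problem to controlling the joint distribution of the tuple $(g_1(a_1n+b_1), g_2(a_2n+b_2))$ as $n$ ranges over a short window. At this point, the affine parameters $a_i,b_i$ can be absorbed by a change of variables, and the essential task becomes to bound $\sum_{n\asymp x} g_1(n)\overline{g_2(n+h)}$ uniformly in $h$ up to an admissible exceptional set.

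The decisive step is the entropy decrement argument. For each prime $p$ in a long dyadic range, the identity $g_i(pn)=g_i(p)g_i(n)$ (valid on all $n$ with $p\nmid n$) relates the joint distribution of the tuple at scale $x$ to the corresponding distribution after dilating $n$ by $p$. A Shannon-entropy accounting shows that the entropy of a carefully chosen sequence of joint distributions indexed by primes cannot drop unboundedly, so the relation must be asymptotically trivial for almost all primes $p$ in a sufficiently long dyadic range. Translating this triviality back into analytic language gives, for many primes $p$, a pretentious relation between $g_1$ and its dilate; summed over primes it forces
\[
\sum_{p\le x}\frac{1-\operatorname{Re}\bigl(g_1(p)\overline{\chi(p)}p^{-it}\bigr)}{p}=O(1)
\]
for some $\chi$ of period at most $A$ and some $|t|\le Ax$, contradicting the hypothesis.

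The genuinely hard step will be the entropy decrement itself: one must keep precise quantitative track of how Shannon entropy drops from one prime scale to the next, handle the exceptional set of $n$ on which multiplicativity fails (including $p\mid n$ and $p\mid a_in+b_i$), and convert asymptotic rigidity under dilations into a genuine Dirichlet character $\chi$ of bounded conductor together with a bounded real parameter $t$. Since this argument is delicate and occupies a substantial portion of \cite{MR4}, our intention is to invoke the theorem as a black box in all subsequent applications rather than reproduce the proof in full.
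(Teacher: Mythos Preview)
The paper does not prove this theorem at all: it is stated as a result of Tao, cited from~\cite{MR4}, and used as a black box (see the sentence ``The key tool is the following recent result by Tao~\cite{MR4}'' immediately preceding the statement). Your proposal ends up in exactly the same place---you sketch the shape of Tao's entropy decrement argument and then explicitly say you will invoke the theorem as a black box---so your treatment matches the paper's, and the sketch you give is a reasonable summary of the method in~\cite{MR4}.
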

We will require the following technical lemma due to Elliott (Lemma $17$ in \cite{MR2658182}).
\begin{lemma}\label{pretenditerations}{\bf [Elliott]}
Let $|g(p)|\le 1$ on the primes, and 
\[\sum_{p\le x}\frac{1-\operatorname{Re}(g(p)p^{it\lambda(x)})}{p}\ll 1\]
for all $x\ge 2.$ Suppose that $\lambda(x)\ll x$ uniformly for all sufficiently large $x.$ Then there exists a constant $C$ such that 
$\lambda(x)-C\ll \frac{1}{\log x}$ and the series 
 \[\sum_{p\ge 1}\frac{1-\operatorname{Re}(g(p)p^{itC})}{p}<\infty.\]
\end{lemma}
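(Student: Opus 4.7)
The plan is to extract the limiting phase $C$ by showing that $t\lambda(x)$ is a Cauchy function of $x$, with quantitative rate $1/\log x$. The workhorse is the standard evaluation
\[
\sum_{p\le z}\frac{1-\cos(u\log p)}{p}=\log\log z+\log|u|+O(1),
\]
valid uniformly for $1/\log z\le |u|\le z$ (immediate from Mertens and the estimate $\sum_{p\le z}p^{iu}/p=\log\zeta(1+1/\log z+iu)+O(1)$). Assume $t\neq 0$, since the case $t=0$ is vacuous.

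The first step is a triangle-inequality comparison between $p^{it\lambda(x)}$ and $p^{it\lambda(y)}$. For $2\le y\le x$, truncating the hypothesis at $x$ to the range $p\le y$ gives $\mathbb{D}(g,p^{it\lambda(x)};y)\ll 1$, and then
\[
\mathbb{D}(p^{it\lambda(x)},p^{it\lambda(y)};y)\le \mathbb{D}(g,p^{it\lambda(x)};y)+\mathbb{D}(g,p^{it\lambda(y)};y)\ll 1.
\]
Setting $u=t(\lambda(x)-\lambda(y))$, the displayed formula above (applicable since $|u|\ll x+y$ by the hypothesis $\lambda(x)\ll x$) forces
\[
|t\lambda(x)-t\lambda(y)|\ll \frac{1}{\log y}.
\]
This is a Cauchy condition, so $t\lambda(x)$ converges to a limit, which we write as $tC$, and sending $y\to\infty$ yields $|\lambda(x)-C|\ll 1/\log x$.

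The second step upgrades the hypothesis from the moving parameter $\lambda(x)$ to the fixed $C$. By the triangle inequality,
\[
\mathbb{D}(g,p^{itC};x)\le \mathbb{D}(g,p^{it\lambda(x)};x)+\mathbb{D}(p^{it\lambda(x)},p^{itC};x).
\]
The first term is $O(1)$ by hypothesis; for the second, use $1-\cos\theta\le \theta^2/2$ and $\sum_{p\le x}(\log p)^2/p\ll (\log x)^2$ to get
\[
\mathbb{D}(p^{it\lambda(x)},p^{itC};x)^{2}\ll t^{2}(\lambda(x)-C)^{2}(\log x)^{2}\ll 1,
\]
using $|\lambda(x)-C|\ll 1/\log x$. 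Thus $\sum_{p\le x}(1-\operatorname{Re}(g(p)p^{itC}))/p$ is bounded uniformly in $x$; being monotone in $x$, it converges as $x\to\infty$.

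The one nontrivial input is the logarithmic separation formula for $\sum_{p\le z}(1-\cos(u\log p))/p$; everything else is a clean two-step application of the triangle inequality for $\mathbb{D}$. The hypothesis $\lambda(x)\ll x$ appears precisely to ensure that $u=t(\lambda(x)-\lambda(y))$ lies in the admissible range of that formula.
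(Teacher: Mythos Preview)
The paper does not supply its own proof of this lemma; it is quoted as Lemma~17 of Elliott~\cite{MR2658182} and used as a black box. So there is nothing in the paper to compare against, and your two-step triangle-inequality argument is indeed the standard route.

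That said, one point needs tightening. Your displayed identity
\[
\sum_{p\le z}\frac{1-\cos(u\log p)}{p}=\log\log z+\log|u|+O(1)
\]
cannot hold on the full range $1/\log z\le |u|\le z$: for $|u|\ge 1$ the left side is at most $2\log\log z+O(1)$, while the right side grows like $\log|u|$. The equality is valid for $1/\log z\le |u|\le 1$; for $1\le |u|\ll z^{O(1)}$ one instead has $\sum_{p\le z}(1-\cos(u\log p))/p=\log\log z+O(1)$. Either way the deduction you need---that the sum being $O(1)$ forces $|u|\ll 1/\log z$---is correct, \emph{provided} $|u|$ is at most a fixed power of $z$. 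In your application $z=y$ and $|u|=|t(\lambda(x)-\lambda(y))|\ll x$, and nothing prevents $x$ from being far larger than any power of $y$; the phrase ``applicable since $|u|\ll x+y$'' does not address this.

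The fix is routine: first compare $\lambda(x)$ with $\lambda(x^{1/2})$, so that $|u|\ll x=z^{2}$ is safely in range and you obtain $|t\lambda(x)-t\lambda(x^{1/2})|\ll 1/\log x$. Telescoping along the chain $x,x^{1/2},x^{1/4},\dots$ down to $y$ then gives $|t\lambda(x)-t\lambda(y)|\ll 1/\log y$ for all $2\le y\le x$, after which your argument proceeds unchanged.
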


\begin{lemma}\label{generaltao}
Suppose that for a multiplicative $f:\mathbb{N}\to\mathbb{T},$~\eqref{boundedsum1} holds. Then there exists a primitive character $\chi$ and $t\in\mathbb{R},$ such that $\mathbb{D}(f(n),\chi(n)n^{it},\infty)<\infty.$
\end{lemma}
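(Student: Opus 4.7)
The strategy is to convert the boundedness of partial sums into a lower bound on a logarithmic correlation, then invoke Theorem~\ref{averagedelliot} to force $f$ to be pretentious, and finally pass to the limit via Elliott's Lemma~\ref{pretenditerations}. I would first exploit the hypothesis $\big|\sum_{n\le x}f(n)\big|\le C$ through a window-second-moment identity: writing $F(x)=\sum_{n\le x}f(n)$, the bound $|F(n+H)-F(n)|\le 2C$ gives
\[
\sum_{n\le N}\frac{|F(n+H)-F(n)|^{2}}{n}\le 4C^{2}\log N+O(1).
\]
Expanding the square, using $|f(n)|=1$, and evaluating the inner sums $\sum_n 1/n$ over the relevant intervals yields
\[
H\log N+2\sum_{1\le h<H}(H-h)\operatorname{Re}\sum_{m\le N}\frac{f(m)\overline{f(m+h)}}{m}=4C^{2}\log N+O(H^{2}).
\]
For any fixed $H>4C^{2}$ the off-diagonal sum is negative of size $\asymp -(H-4C^{2})\log N$, so by averaging some shift $h_{0}\in[1,H)$ satisfies $\big|\sum_{m\le N}f(m)\overline{f(m+h_{0})}/m\big|\gg_{H,C}\log N$.

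Next, I would contradict this with Theorem~\ref{averagedelliot} unless $f$ is pretentious. Pick $\varepsilon$ smaller than the implicit constant above, and choose $A_{0}$ large enough that Tao's theorem applies with this $\varepsilon$ for each choice of parameters $(a_1,a_2,b_1,b_2)=(1,1,0,h)$ with $h\in[1,H)$. If, for some arbitrarily large $N$, one had $\sum_{p\le N}(1-\operatorname{Re}(f(p)\chi(p)p^{it}))/p\ge A_{0}$ uniformly over all Dirichlet characters $\chi$ of period $\le A_{0}$ and reals $|t|\le A_{0}N$, then Theorem~\ref{averagedelliot} (with $\omega=N$) would bound each logarithmic correlation by $\varepsilon\log N$, contradicting the lower bound just obtained. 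Hence for every sufficiently large $N$ there exist a character $\chi_{N}$ of period at most $A_{0}$ and a real $|t_{N}|\le A_{0}N$ with $\sum_{p\le N}(1-\operatorname{Re}(f(p)\chi_{N}(p)p^{it_{N}}))/p<A_{0}$. Only finitely many $\chi$ have period $\le A_{0}$, so I pass to a subsequence $N_{j}\to\infty$ along which $\chi_{N_{j}}=\chi$ is fixed, and extend $t_{\cdot}$ to all $N$ by $t_{N}:=t_{N_{j}}$ for $N\in[N_{j},N_{j+1})$. After a dyadic thinning of $\{N_{j}\}$ if necessary, the identity $\sum_{N_{j}<p\le N}p^{-1}=o(1)$ on each dyadic range preserves $\sum_{p\le N}(1-\operatorname{Re}(g(p)p^{it_{N}}))/p\ll 1$ for $g:=f\bar\chi$, with $|t_{N}|\ll N$.

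Applying Elliott's Lemma~\ref{pretenditerations} then produces $\kappa\in\mathbb{R}$ with $\sum_{p}(1-\operatorname{Re}(g(p)p^{i\kappa}))/p<\infty$, which is exactly $\mathbb{D}(f,\chi n^{i\kappa};\infty)<\infty$; replacing $\chi$ by the primitive character that induces it changes the distance by only a finite contribution from primes dividing the original modulus. The most delicate part will be the dichotomy in the middle paragraph: Tao's hypothesis demands non-pretentiousness uniformly over twists $|t|\le A_{0}N$ growing with $N$, so the pretentious direction $t_{N}$ produced in the contrary case may genuinely drift with $N$. Elliott's iteration lemma is precisely the device that controls this drift and extracts a single global constant $\kappa$, closing the argument.
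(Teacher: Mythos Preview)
Your overall strategy matches the paper's: bounded partial sums force a large logarithmic autocorrelation, Tao's Theorem~\ref{averagedelliot} then yields pretentiousness at every scale, and Elliott's Lemma~\ref{pretenditerations} should extract a fixed twist. The second-moment computation and the appeal to Tao's theorem are essentially fine (the displayed identity ought to be the inequality ``$\le 4C^{2}\log N+O(H^{2})$'', but that is cosmetic).

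The genuine gap is in the passage to Elliott's lemma. After pigeonholing to a subsequence $\{N_{j}\}$ with $\chi_{N_{j}}=\chi$ fixed, you set $t_{N}:=t_{N_{j}}$ for $N\in[N_{j},N_{j+1})$ and assert that ``dyadic thinning'' yields $\sum_{p\le N}(1-\operatorname{Re}(f(p)\overline{\chi(p)}p^{it_{N}}))/p\ll 1$ for all $N$. This fails: thinning only enlarges the gaps, and for $N$ close to $N_{j+1}$ the overshoot $\sum_{N_{j}<p\le N}1/p\asymp\log(\log N_{j+1}/\log N_{j})$ can be arbitrarily large, since nothing in your argument controls how sparse the set $\{N:\chi_{N}=\chi\}$ may be. Elliott's hypothesis ``for all $x\ge 2$'' is therefore unverified. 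The paper circumvents this with a different device: it chooses $k$ so that $\chi^{k}$ is principal for \emph{every} character of modulus $\le A$, and uses the triangle inequality $k\,\mathbb{D}(f,\chi_{x}\,n^{it_{x}};x)\ge\mathbb{D}(f^{k},n^{ikt_{x}};x)+O(1)$ to obtain a bound valid at \emph{every} scale $x$ for the single function $f^{k}$. Elliott's lemma then produces a fixed $t_{1}$; with $t=t_{1}/k$ determined, only finitely many characters $\chi_{x}$ remain, and one of them recurs along an unbounded set of scales, giving $\mathbb{D}(f,\chi_{1}\,n^{it_{1}/k};\infty)<\infty$. (Your route is salvageable---for instance by exploiting monotonicity of $\mathbb{D}(\cdot,\cdot;x)$ in $x$ to show directly that the $t_{N_{j}}$ form a Cauchy sequence---but not as written.)
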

\begin{proof}
Let $H\in\mathbb{N}.$
Suppose that for each $1\le h\le H$ we have
\[\frac{1}{\log x}\sum_{n\le x}\frac{f(n)\overline{f(n+h)}}{n}\le \frac{1}{2H}\cdot\]
Consider
\[T(x):=\frac{1}{\log x}\sum_{n\le x}\frac{1}{n}\left|\sum_{k=n+1}^{n+H+1}f(k)\right|^2\]
Expanding the square we get
\[T(x)=\sum_{1\le h_1\ne h_2\le H}\frac{1}{\log x}\sum_{n\le x}\frac{f(n+h_1)\overline{f(n+h_2)}}{n}\cdot\]
The diagonal contribution $h_1=h_2$ is $1+o(1).$ For $h_2>h_1$ we introduce $h=h_2-h_1$ and replace $n$ in the denominator by $N=n+h_1$ at a cost $\ll H/\log x.$ We change the range for $N$ from $1+h_1\le N\le x+h_1$ to $1\le n\le x$ at a cost
of $\ll \log H/\log x.$ Therefore
\begin{align*}
T(x)&=H+o(1)-\sum_{|h|\le H}(H-|h|)\cdot\frac{1}{\log x}\sum_{N\le x}\frac{f(N)\overline{f(N+h})}{N}\\&\ge H-(H^2-H)\cdot\frac{1}{2H}+o(1)= \frac{H}{2}+O(1)
\end{align*}
for $x\to\infty.$ This contradicts~\eqref{boundedsum1} for sufficiently large $H\ge 1.$ Thus, for a fixed $H\ge 1,$ and large every large $x\gg 1,$ there exists $1\le h_x\le H$\
such that
\[\frac{1}{\log x}\sum_{n\le x}\frac{f(n)\overline{f(n+h_x)}}{n}\gg 1.\]
Since $h_x\le H,$ we can apply Theorem~\ref{averagedelliot} to conclude that there exists $A=A(H)\ge 0$ such that for any sufficiently large $x,$ there exists $t_x\in\mathbb{R},$ $|t_x|\le Ax$ and a primitive character $\chi$ of modulus $D\le A,$ such that $\mathbb{D}(f(n),n^{it_x}\chi(n);x)\le A,$ namely
\[\sum_{p\le x}\frac{1-\operatorname{Re}(f(p)p^{-it_x}\overline{\chi(p)})}{p}\le A.\]
Since we have only finitely many possibilities for $\chi,$ there exists $k\in\mathbb{N},$ such that $\chi^k(p)=1$ for all primes $p\ge A.$ Triangle inequality now implies
\[kA\ge k\mathbb{D}(f(n),n^{it_x}\chi(n);x)\ge \mathbb{D}(f^k(n),n^{ikt_x}\chi^k(n);x)+O(1)= O(1)+\mathbb{D}(f^k(n),n^{ikt_x};x).\]
Hence, $\mathbb{D}(f^k(n),n^{ikt_x};x)=O(1).$ Clearly $|kt_x|\le kA$ and Lemma~\ref{pretenditerations} implies that there exists fixed $t_1>0$ such that $$kt_x=t_1+O\left(\frac{1}{\log x}\right).$$ Since
\[\mathbb{D}(f(n),n^{i\frac{t_1}{k}}\chi(n);x)\le \mathbb{D}(f(n),n^{it_x}\chi(n);x)+O(1)=O(1)\] 
and we have only finitely many choices of primitive characters $\chi(n),$ this implies that there exists unique primitive character $\chi_1(n)$ and $t_{\chi_1}=\frac{t_1}{k}$ such that 
$\mathbb{D}(f(n),p^{it_{\chi_1}}\chi_1(n);\infty)<\infty$ and the result follows.
\end{proof}
We now refine the result of Lemma~\ref{generaltao}.
\begin{theoremn}[\ref{complexcor}]
Suppose for a multiplicative $f:\mathbb{N}\to\mathbb{T},$~\eqref{boundedsum1} holds. Then there exists a primitive character $\chi$ of an odd conductor $q$ and $t\in\mathbb{R},$ such that $\mathbb{D}(f(n),\chi(n)n^{it};\infty)<\infty$ and $f(2^k)=-\chi^k(2)2^{-ikt}$ for all $k\ge 1.$
\end{theoremn}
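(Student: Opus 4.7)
By Lemma~\ref{generaltao} I may fix a primitive character $\chi$ of conductor $q$ and a real $t$ with $\mathbb{D}(f,\chi n^{it};\infty)<\infty$; the multiplicative function $F$ of~\eqref{twist} then satisfies $\mathbb{D}(F,1;\infty)<\infty$. What remains is to prove that $q$ is odd and that $F(2^{k})=-1$ for every $k\ge 1$, after which unwrapping the definition of $F$ on primes coprime to $q$ gives the stated value of $f(2^{k})$ on powers of two.

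First I would convert the bounded partial sums into a constraint on two--point correlations. From $|S(x)|\le B$ one has $|\sum_{k=n+1}^{n+H+1}f(k)|\le 2B$, and so $\sum_{n\le x}|\sum_{k=n+1}^{n+H+1}f(k)|^{2}\le 4B^{2}x$. Expanding the square, using $|f|\equiv 1$, dividing by $x$ and letting $x\to\infty$ give
\[
\operatorname{Re}\sum_{h=1}^{H}(H+1-h)\,c_{h}\;=\;-\frac{H+1}{2}+O(1),
\]
where $c_{h}=\lim_{x\to\infty}\frac{1}{x}\sum_{n\le x}f(n)\overline{f(n+h)}$ exists by Theorem~\ref{charactercor}. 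In particular the Ces\`aro mean $\frac{1}{H}\sum_{h\le H}c_{h}$ tends to $0$ as $H\to\infty$, for otherwise the left hand side above would be of order $H^{2}$ rather than $H$.

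By Theorem~\ref{charactercor} the correlation factors as $c_{h}=\prod_{p}a_{p}(v_{p}(h))$ with $a_{p}(n)=M_{p}(F,\overline{F};p^{n})$ for $p\nmid q$ and $a_{p}(n)=M_{p^{\ell}}(f,\overline{f};p^{n})$ for $p^{\ell}\|q$. The renormalised sequence $c_{h}/\prod_{p}a_{p}(0)$ is a bounded multiplicative function of $h$ with finite distance to $1$, so the Hal\'asz--Wirsing mean value theorem identifies
\[
\frac{1}{H}\sum_{h\le H}c_{h}\;\longrightarrow\;\prod_{p}B_{p},\qquad B_{p}=\Bigl(1-\tfrac{1}{p}\Bigr)\sum_{n\ge 0}\frac{a_{p}(n)}{p^{n}},
\]
and the previous paragraph forces $\prod_{p}B_{p}=0$. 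Each local factor can then be computed explicitly. For $p^{\ell}\|q$, substituting the piecewise formula for $M_{p^{\ell}}(f,\overline{f};p^{n})$ produces a geometric telescoping that annihilates the $n\ge\ell$ tail, leaving $B_{p}=(p-1)^{2}/p^{\ell+1}>0$. For $p\nmid q$, the same substitution combined with $|F(p^{k})|=1$ gives the Parseval-type identity
\[
B_{p}\;=\;\Bigl|\bigl(1-\tfrac{1}{p}\bigr)\sum_{k\ge 0}\frac{F(p^{k})}{p^{k}}\Bigr|^{2}\;=\;|M_{p}(F)|^{2},
\]
and the reverse triangle inequality together with $\sum_{k\ge 1}1/p^{k}=1/(p-1)<1$ for $p\ge 3$ shows $M_{p}(F)\ne 0$ there.

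Thus $B_{p}>0$ whenever $p\ge 3$ or $p\mid q$, and the only way to realise $\prod_{p}B_{p}=0$ is to have $2\nmid q$ together with $B_{2}=|M_{2}(F)|^{2}=0$; the former is precisely the assertion that $q$ is odd. From $M_{2}(F)=0$ one reads $\sum_{k\ge 1}F(2^{k})/2^{k}=-1$, and since $\sum_{k\ge 1}1/2^{k}=1$ with $|F(2^{k})|=1$, saturation of the triangle inequality forces $F(2^{k})=-1$ for every $k\ge 1$, which produces the claimed formula for $f(2^{k})$. The principal obstacle is the middle step: a careful application of the Hal\'asz--Wirsing theorem to the almost-multiplicative sequence $c_{h}$ (one must either renormalise by the possibly vanishing constant $\prod_{p}a_{p}(0)$ or argue through the Dirichlet series $\sum_{h}c_{h}/h^{s}$ near $s=1$), together with the telescoping computation producing $B_{p}=|M_{p}(F)|^{2}$ for $p\nmid q$.
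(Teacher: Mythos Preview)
Your proposal is correct and follows essentially the same route as the paper's own proof: both invoke Lemma~\ref{generaltao}, expand the second moment of short sums to force the Ces\`aro average of the correlations $c_h$ to vanish, use Theorem~\ref{charactercor} to compute the Euler factors of that average (your $B_p$ is exactly the paper's $S_p$, in particular $B_p=|M_p(F)|^2$ for $p\nmid q$ and $B_p=(1-1/p)^2p^{1-\ell}$ for $p^\ell\|q$), and conclude that only the prime $2$ with $2\nmid q$ can make the product vanish. The obstacle you flag---applying a mean-value theorem to the possibly non-multiplicative sequence $c_h$ when $q$ is not squarefree---is handled just as informally in the paper, which simply asserts $\lim_{N\to\infty}N^{-1}\sum_{n\le N}S_n=\prod_p S(p)$.
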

\begin{proof}
Applying Lemma~\ref{generaltao}, we can find a primitive character $\chi$ of conductor $q$ and $t\in\mathbb{R}$ such that $\mathbb{D}(f(n),\chi(n)n^{it};\infty)<\infty.$
Theorem~\ref{charactercor} implies that for any $d\ge 0,$ we have
\[S_d=\lim_{x\to\infty}\frac{1}{x}\sum_{n\le x}f(x)\overline{f(x+d)}=\prod_{\substack{p\le x\\ p\nmid q}}M_p(F,\overline{F};d)\prod_{p^{\l}\vert| q}M_{p^{\l}}(f,\overline{f},d).\]
For fixed $H\ge 1,$ we can now write
\begin{align*}
\lim_{x\to\infty}\frac{1}{x}\sum_{n\le x}\left|\sum_{k=n+1}^{n+H+1}f(k)\right|^2&=\lim_{x\to\infty}\frac{1}{x}\left[\sum_{h=0,\ n\le x}Hf(n)\overline{f(n+h)}+ 2\sum_{1\le h\le H}(H-h)\sum_{n\le x}f(n)\overline{f(n+h)}\right]\\&=HS_0+2\sum_{h=1}^H(H-h)S_h=H+2\sum_{N=1}^{H-1}\sum_{n=1}^NS_m.
\end{align*}
We note that all $S_m\le 1$ and Theorem~\ref{charactercor} implies that each $S_m$ behaves like a scaled multiplicative function, since it is given by the Euler product. We are going to show that there exists 
$\lim_{N\to\infty}\frac{1}{N}\sum_{n\le N}S_n=c$ and so
\[H+2\sum_{N=1}^{H-1}\sum_{n=1}^NS_m=O(1)\sim H+2\sum_{N=1}^Hcn=cH^2+O(H).\]
Latter would imply that $c=0.$ We turn to the computations of the corresponding mean values. Clearly
\[\lim_{N\to\infty}\frac{1}{N}\sum_{n\le N}S_n=\prod_{p\le N}S(p)\]
where $S(p)$ denotes the local factor that corresponds to prime $p.$ If $p\nmid q,$ then using Theorem~\ref{charactercor} and simple computations
\begin{align*}
S_p=\sum_{k\ge 0}\left(\frac{1}{p^k}-\frac{1}{p^{k+1}}\right)M_p(F,\overline{F},p^k)=\left|\left(1-\frac{1}{p}\right)\sum_{k\ge 0}\frac{F(p^k)}{p^k}\right|^2.
\end{align*}
If $p^{\l}\vert| q,$ then again using Theorem~\ref{charactercor} we get
 \begin{align*}
S_p=\sum_{k\ge 0}\left(\frac{1}{p^k}-\frac{1}{p^{k+1}}\right)M_{p^{\l}}(f,\overline{f},p^k)=\frac{1}{p^{\l-1}}\left(1-\frac{1}{p}\right)^2.
\end{align*}
Since $c=0,$ one of the Euler factors has to be $0.$ The only possibility then is $S_2=0$ and $2\nmid q$ and $F(2^k)=-1$ for all $k\ge 1.$ This completes the proof.
\end{proof}
\noindent \textbf{Proof of the Erd\H{o}s-Coons-Tao conjecture.}
We now move on to the proof of Theorem~\ref{tao}.  
It turns out that periodic multiplicative functions with zero mean have the following equivalent characterization that we will use throughout the proof.
\begin{prop}\label{equiv}
Suppose that $f$ multiplicative with each $|f(n)|\le 1$ for all $n\in\mathbb{N}.$ Then there exists an integer $m$ such that $f(n+m)=f(n)$ for all $n\in\mathbb{N}$ and $\sum_{n=1}^m f(n)=0$ if and only if $f(2^k)=-1$ for all $k\ge 1$ and there exists an integer $M$ such that if prime power $p^k\ge M$ then $f(p^k)=f(p^{k-1}).$
\end{prop}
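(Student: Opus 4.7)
The plan is to prove both directions by combining the Wirsing--Halász mean-value formula \eqref{Meanf} with a structural analysis of multiplicative functions that are periodic modulo $m$.

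For the direction $(\Leftarrow)$, I would first note that primes $p > M$ automatically satisfy $f(p) = f(1) = 1$ (since $p^1 \ge M$ forces $f(p) = f(p^0)$), and hence $f(p^k) = 1$ for every $k \ge 0$; thus only primes $p \le M$ can matter. Setting $k_p := \min\{k : p^k \ge M\}$ and $m := \prod_{p \le M} p^{k_p}$, I would verify periodicity by comparing, for each prime $p$, the valuations $v_p(n)$ and $v_p(n+m)$: for $p \nmid m$ both prime-power factors equal $1$, while for $p \mid m$ a short case analysis on whether $v_p(n) < k_p$ or $v_p(n) \ge k_p$ uses the stabilization hypothesis to conclude $f(p^{v_p(n)}) = f(p^{v_p(n+m)})$. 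Multiplicativity then yields $f(n+m) = f(n)$. To evaluate the sum over one period, periodicity shows the Ces\`aro mean of $f$ equals $\tfrac{1}{m}\sum_{n=1}^m f(n)$; by \eqref{Meanf} this also equals $\prod_p M_p(f)$, and the hypothesis $f(2^k) = -1$ makes $M_2(f) = \tfrac{1}{2}(1-\sum_{k\ge 1}2^{-k}) = 0$, forcing the period-sum to vanish.

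For $(\Rightarrow)$, the same chain read backwards shows that the mean of $f$ vanishes, hence $\prod_p M_p(f) = 0$, so some local factor is zero. A triangle-inequality analysis of $|\sum_{k\ge 1} f(p^k)/p^k| \le 1/(p-1)$ identifies $p=2$ as the unique prime that can produce a vanishing local factor, and the equality case forces $f(2^k) = -1$ for all $k \ge 1$. Periodicity combined with multiplicativity then shows that $f$ restricted to residues coprime to $m$ agrees with a Dirichlet character $\chi \pmod{m}$ (representatives coprime to each other can be chosen as primes by Dirichlet's theorem); for $p \nmid m$ this gives $f(p^k) = \chi(p)^k$, which stabilizes only when $\chi(p) = 1$.

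The main obstacle is to force $\chi$ to be principal. My approach is to exploit the identity $f(2^s p) = -\chi(p)$ (with $s = v_2(m)$, using $f(2^s) = -1$) and compare it with the periodic reduction $f((2^s p) \bmod m) = f(2^s q')$ where $q' = p \bmod t$ and $t = m/2^s$; writing $q' = 2^a b$ with $b$ odd and coprime to $t$, multiplicativity yields $\chi(p) = \chi(b)$. Combining this identity over all primes $p$ with the CRT factorization $\chi = \chi_{2^s} \otimes \chi_t$ and Dirichlet's theorem on primes in arithmetic progressions constrains each local component of $\chi$ to be principal. For $p \mid m$, periodicity applied to $p^k$ with $k > v_p(m)$, together with the factorization $p^k - m = p^{v_p(m)}(p^{k-v_p(m)} - m/p^{v_p(m)})$ and the (now established) principality of $\chi$, gives $f(p^k) = f(p^{v_p(m)})$, so any $M > m$ suffices. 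This CRT-and-Dirichlet step is the delicate part, since neither the mean-value identity nor the character observation alone rules out a non-principal $\chi$; the hypothesis $f(2^k) = -1$ is genuinely needed to break such possibilities.
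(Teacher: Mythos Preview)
Your $(\Leftarrow)$ direction is correct; invoking \eqref{Meanf} to obtain the zero period-sum is a legitimate shortcut (the paper itself just says ``immediately follows from the Chinese remainder theorem'').

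Your $(\Rightarrow)$ direction has a real gap at the step claiming that the identity $\chi(p)=\chi(b)$ forces $\chi$ to be principal. Writing $\chi=\chi_{2^s}\chi_t$ and $p\equiv 2^a b\pmod t$, your identity unwinds to $\chi_{2^s}(p)\,\chi_t(2)^a=\chi_{2^s}(b)$. Varying $p\bmod 2^s$ with $p\bmod t$ fixed shows only that $\chi_{2^s}$ is trivial; then taking $p\equiv 2\pmod t$ gives $\chi_t(2)=1$. No further constraint on $\chi_t$ emerges, and indeed none can: take $m=14$, $\chi_t=\bigl(\tfrac{\cdot}{7}\bigr)$ (for which $\chi_t(2)=1$), and set $f(2^k)=-1$, $f(7^k)=0$, $f(p^k)=\bigl(\tfrac{p}{7}\bigr)^k$ otherwise. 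This $f$ is multiplicative with $|f|\le 1$, one checks it is periodic mod $14$ with $\sum_{n=1}^{14}f(n)=0$, yet $f(3^k)=(-1)^k$ never stabilises. So the proposition as literally stated (with merely $|f|\le 1$) is false; one must use $|f(n)|=1$ for all $n$ (the case relevant to Theorem~\ref{tao}), and your argument nowhere invokes this.

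The paper's route for $(\Rightarrow)$ is much shorter and makes the role of nonvanishing transparent. Iterating periodicity gives $f(km)=f(m)$ for every $k\ge 1$. Since $f(m)\ne 0$, taking $(k,m)=1$ yields $f(k)f(m)=f(m)$, hence $f(k)=1$; in particular $f(p^b)=1$ whenever $p\nmid m$, so your character $\chi$ is principal in one line. Taking $k=p^{b-a}$ with $p^a\Vert m$ gives $f(p^b)f(m/p^a)=f(p^a)f(m/p^a)$, hence $f(p^b)=f(p^a)$ for all $b\ge a$. Stabilisation is thus immediate, after which $\sum_{n=1}^m f(n)=\sum_{d\mid m}f(d)\phi(m/d)$ factors as a finite Euler product whose vanishing pins down $p=2$ and $f(2^k)=-1$. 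Your detour through Dirichlet characters and the $f(2^s p)$ identity is not needed.
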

 \begin{proof}
Suppose that $f(n+m)=f(n)$ for all $n\ge 1$ and $\sum_{n=1}^m f(n)=0.$  From periodicity we have $f(km)=f(m)$ for all $k\ge 1,$ and so if $p^a\vert| m$ then $f(p^b)=f(p^a)$ for all $b\ge a.$
In particular if $p$ does not divide $m$ then $f(p^b)=1.$
Hence, \[\sum_{n=1}^m f(n) = \sum_{d|m}  f(d) \phi\left(\frac{m}{d}\right)
 = \prod_{p^a\vert| m} \left(p^a\left(1-\frac{1}{p}\right) \left( \sum_{1\le k\le a-1}\frac{f(p^k)}{p^k}\right) + f(p^a) \right).\]
Consequently, some factor has to be $0.$ The only possibility is then $p=2$ and $f(2^k)=-1$ for all $k\ge 1.$ The other direction immediately follows from the Chinese remainder theorem.  
 \end{proof}
Our starting point is the following result:
 \begin{theorem}\label{taopret}{\bf[Tao, 2015] }
If for a multiplicative $f:\mathbb{N}\to \{-1,1\}$
\[\limsup_{x\to\infty}|\sum_{n\le x}f(n)|<\infty,\] then 
$f(2^j)=-1$ for all $j\ge 0$ and 

\[\sum_{p}\frac{1-f(p)}{p}<\infty.\]

\end{theorem}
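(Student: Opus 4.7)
The plan is to bootstrap from the pretentiousness lemma (Lemma~\ref{generaltao}) using the second-moment argument employed in the proof of Theorem~\ref{complexcor}. Applying Lemma~\ref{generaltao} to $f$, we obtain a primitive Dirichlet character $\chi$ of some conductor $q$ and a real number $t$ with $\mathbb{D}(f,\chi n^{it};\infty)<\infty$. Since $f$ is real-valued, $f=\overline{f}$, so also $\mathbb{D}(f,\overline{\chi}n^{-it};\infty)<\infty$; the triangle inequality then yields $\mathbb{D}(1,\chi^2 n^{2it};\infty)<\infty$. Standard facts that neither $n^{is}$ for $s\neq 0$ nor a nontrivial character is pretentious to $1$ force $t=0$ and $\chi^2$ principal, so $\chi$ is a real (quadratic or trivial) character.

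Next I would exploit the bounded-partial-sum hypothesis via a second moment. From $|S(x)|\le C$ we get $|S(n+H)-S(n)|\le 2C$, hence
\[
\frac{1}{x}\sum_{n\le x}\Bigl|\sum_{k=n+1}^{n+H+1}f(k)\Bigr|^2 \le 4C^2
\]
for every $H\ge 1$ and every $x$. Expanding the square and evaluating each correlation $S_h:=\lim_{x}\frac{1}{x}\sum_{n\le x}f(n)f(n+h)$ by Theorem~\ref{charactercor}, this becomes $H+2\sum_{N=1}^{H-1}\sum_{h=1}^{N}S_h=O(1)$ uniformly in $H$, exactly as in the proof of Theorem~\ref{complexcor}. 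Because the $S_h$ factor as Euler products, the Cesaro mean $\frac{1}{H}\sum_{h\le H}S_h$ converges to a constant $c=\prod_p S(p)$; the bound forces $c=0$, so at least one local factor must vanish. Inspecting the formulas in Theorem~\ref{charactercor}, for $p\nmid q$ the local factor equals $\bigl|(1-1/p)\sum_{k\ge 0}f(p^k)\chi(p^k)/p^k\bigr|^2\ge 0$, while for $p^{\ell}\|q$ it equals $p^{-(\ell-1)}(1-1/p)^2>0$. Thus vanishing can only occur at a prime $p\nmid q$ where the inner Dirichlet series is zero, and since $f(p^k)\chi(p^k)\in\{-1,+1\}$ this forces $p=2$, $2\nmid q$, and $f(2^k)=-\chi(2)^k$ for every $k\ge 0$.

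The remaining step, which I expect to be the main obstacle, is to upgrade this to the conclusion that $\chi$ is in fact the \emph{principal} character, thereby giving $\chi(2)=1$ (and hence $f(2^k)=-1$ for all $k$) together with $\sum_p (1-f(p))/p=\mathbb{D}(f,1;\infty)^2<\infty$. The plan here is to rule out any non-principal real $\chi$ by a quantitative argument: for such $\chi$, the twisted function $f\chi$ pretends to $1$, and Halász's theorem gives a nonzero mean for $f\chi$ on each residue class coprime to $q$, producing partial sums of $f$ over individual residue classes of order $\asymp x/\phi(q)$ with sign pattern dictated by $\chi$; maintaining $|S(x)|\le C$ for every $x$ then requires exact cancellation between residue classes at every scale, which is incompatible with the oscillatory $x^{iT}$-type behaviour of the main term unless $\chi$ is trivial. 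A rigorous implementation would apply Theorem~\ref{averagedelliot} (Tao's logarithmic Elliott input) at many scales, or equivalently invoke an entropy-decrement argument; this quantitative removal of the non-trivial-character option is the deepest ingredient and is what distinguishes Theorem~\ref{taopret} from the weaker pretentiousness consequence packaged in Lemma~\ref{generaltao}.
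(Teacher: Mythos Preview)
The paper does not prove Theorem~\ref{taopret} at all: it is quoted verbatim as a result of Tao from~\cite{MR3} and used as a black box input (``Our starting point is the following result''). So there is no ``paper's own proof'' to compare against; what you have written is an attempt to \emph{deduce} Tao's theorem from the machinery the paper develops (which in turn rests on a different result of Tao, Theorem~\ref{averagedelliot}).

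Your Steps~1--3 are fine and amount exactly to specialising the proof of Theorem~\ref{complexcor} to real-valued $f$: you correctly get $t=0$, $\chi$ real, $q$ odd, and $f(2^k)\chi(2)^k=-1$ for all $k\ge 1$.

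Step~4, however, is a genuine gap, and your sketch for it does not work. You invoke ``oscillatory $x^{iT}$-type behaviour of the main term'' to force a contradiction, but you have already shown $t=0$, so there is no oscillation to exploit; the mean values on residue classes are genuine constants. More concretely, if $\chi$ is a non-principal real character of odd conductor $q$ with $\chi(2)=-1$, your argument only yields $f(2^k)=(-1)^{k+1}$, contradicting the desired conclusion $f(2^k)=-1$; and if $\chi(2)=1$ you get $f(2^k)=-1$ but then $\mathbb{D}(f,1;\infty)=\infty$ (since $\mathbb{D}(1,\chi;\infty)=\infty$ for non-principal $\chi$), again contradicting the theorem. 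So you must rule out non-principal $\chi$ entirely, and nothing in the second-moment computation does this: the vanishing of $c=\prod_p S(p)$ is already fully accounted for by the factor at $p=2$, and the remaining Euler factors carry no information distinguishing principal from non-principal $\chi$. Excluding the non-principal case is precisely the substance of Tao's argument in~\cite{MR3} (via entropy decrement together with the Polymath5 Fourier reduction), and it does not follow from Theorem~\ref{averagedelliot} plus the correlation formulas in this paper. In short: your proposal recovers Theorem~\ref{complexcor} but not Theorem~\ref{taopret}, and the missing step is the deep one.
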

In what follows we restrict ourselves to the multiplicative functions $f:\mathbb{N}\to\{-1,1\}$ such that $\mathbb{D}(1,f,\infty)<\infty,$ $f=1*g$ and $f(2^{j})=1$ for all $j\ge 1.$ For such such functions we are going to drop the subscript and set
 \begin{equation}\label{quant}
 G_0(a)=G(a):=\prod_{p^k||a}\left(|g(p^k)|^2+2\sum_{i\ge k+1}\frac{g(p^k){g(p^i)}}{p^{i-k}}\right).
 \end{equation}
The following lemma summarizes properties of $G(a)$ that we will use throughout the proof.
\begin{lemma}\label{properties}
Let $G(a)$ be as above. Then
\begin{enumerate}
\item $G(4a)=0,$ $a\in\mathbb{N};$
\\
\item $G(2a)=-4G(a)$ for odd $a$;
\\
\item $\sum_{a\ge 1}\frac{G(a)}{a^2}=0;$
\\
\item If $f(3)=1,$ then $G(a)\le 0$ for all odd $a;$
\\
\item $\sum_{a\ge 1}\frac{G(a)}{a}=1.$
\end{enumerate}
\end{lemma}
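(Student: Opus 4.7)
My plan is to reduce each claim to a local computation at each prime, together with a single Abel-summation identity that puts $G(p^k)$ into closed form. The hypothesis $f(2^j)=-1$ for all $j\ge 1$ (Theorem~\ref{taopret}) gives $g(2)=-2$ and $g(2^k)=0$ for $k\ge 2$: this settles~(1), because the factor at $2$ in $G(4a)$ contains $g(2^{v_2(4a)})^2 = 0$; and~(2), because the factor at $2$ in $G(2a)$ (odd $a$) equals $g(2)^2=4$ while the factor at $2$ in $G(a)$ (with $k=0$) equals $1+2g(2)/2=-1$, giving the ratio $-4$. Property~(3) then follows from the Euler-product identity $\sum_{k\ge 0}G(p^k)/p^{2k}=F_p^2$, where $F_p:=\sum_k g(p^k)/p^k$, so that $\sum_a G(a)/a^2 = \prod_p F_p^2 = 0$ since $F_2 = 1+g(2)/2 = 0$.

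The key ingredient for~(4) and~(5) is a closed form for $G(p^k)$ with $k\ge 1$. Abel summation, applied to $g(p^i)=f(p^i)-f(p^{i-1})$, yields
\[
\sum_{i>k}\frac{g(p^i)}{p^{i-k}} \;=\; \Bigl(1-\tfrac{1}{p}\Bigr)A_p(k) - f(p^k),\qquad A_p(k):=\sum_{j\ge 0}\frac{f(p^{k+j})}{p^j}.
\]
Substituting and using the two $\pm 1$-valued identities $g(p^k)^2=2-2f(p^k)f(p^{k-1})$ and $g(p^k)f(p^k)=1-f(p^k)f(p^{k-1})$, the non-$A_p(k)$ contributions cancel exactly, yielding
\[
G(p^k) \;=\; 2\Bigl(1-\tfrac{1}{p}\Bigr) g(p^k)\,A_p(k)\qquad (k\ge 1).
\]

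Property~(4) is then short: for odd $p\ge 3$ and $k\ge 1$, either $g(p^k)=0$ (factor zero) or $g(p^k)=2f(p^k)$; the recursion $A_p(k)=f(p^k)+A_p(k+1)/p$ and the trivial bound $|A_p(k+1)|\le p/(p-1)$ give $f(p^k)A_p(k)\ge(p-2)/(p-1)>0$, so $G(p^k)\ge 0$. A parallel estimate at $k=0$ yields $G_p(1)=2F_p-1\ge 1-4/p>0$ for $p\ge 5$, and the assumption $f(3)=1$ pins $F_3\ge 7/9$ and hence $G_3(1)\ge 5/9>0$. Multiplying the non-negative factors over odd primes and the factor $-1$ at $p=2$ gives $G(a)\le 0$ for every odd $a$. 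For~(5) I would apply the bounded-partial-sum bound $\tfrac{1}{x}\sum_{n\le x}(S(n+m)-S(n))^2\le 4C^2$. The left-hand side (as $x\to\infty$) equals $m+2\sum_{h=1}^{m-1}(m-h)S_h$; substituting $S_h=\sum_{r\mid h}G(r)/r$ from Corollary~\ref{relation} and evaluating $\sum_{h\le m-1,\,r\mid h}(m-h)=m^2/(2r)-m/2+O(1)$ gives
\[
m+2\sum_{h=1}^{m-1}(m-h)S_h \;=\; m^2\sum_r\frac{G(r)}{r^2} + m\Bigl(1-\sum_r\frac{G(r)}{r}\Bigr)+O(1).
\]
By~(3) the first term is zero; absolute convergence of $\sum_r|G(r)|/r$ (using Tao's $\sum_{f(p)=-1}1/p<\infty$ in tandem with the closed form) makes the $O(1)$ remainder legitimate, so letting $m\to\infty$ forces $\sum_r G(r)/r=1$.

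The main obstacle is the closed-form identity $G(p^k)=2(1-1/p)g(p^k)A_p(k)$: it is a small miracle peculiar to $\{-1,+1\}$-valued $f$, and once it is in hand both~(4) and~(5) follow from routine bookkeeping.
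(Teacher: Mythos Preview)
Your proofs of (1)--(4) are correct. The closed form $G(p^k)=2(1-\tfrac1p)g(p^k)A_p(k)$ for $k\ge1$ is a clean observation that the paper does not make explicit; the paper instead bounds each Euler factor directly via $|g(p^i)|\le2$. Your route to~(3) via the Euler-product identity $\sum_{k\ge0} G(p^k)/p^{2k}=F_p^2$ is also different from (and slicker than) the paper's, which simply splits the sum into odd and twice-odd parts and invokes~(1)--(2). Your bound $F_3\ge7/9$ in~(4) is in fact sharper than the paper's $E_3(1)\ge\tfrac13$, since you exploit the constraint $g(3^i)=f(3^i)-f(3^{i-1})$ rather than just $|g(3^i)|\le2$.

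Your argument for~(5), however, has a genuine gap: you invoke the bounded-partial-sum hypothesis $|S(n)|\le C$, but this is \emph{not} among the standing assumptions of the lemma (which are only $f:\mathbb N\to\{-1,1\}$ multiplicative, $\mathbb D(1,f;\infty)<\infty$, and $f(2^j)=-1$). A secondary issue is that your claimed remainder $O(1)$ in $\sum_{h\le m-1,\,r\mid h}(m-h)=m^2/(2r)-m/2+O(1)$ is really $s(r-s)/(2r)=O(r)$ with $1\le s\le r$, $s\equiv m\pmod r$; the total remainder is then governed by $\sum_{r\le m}|G(r)|$, which one can show is $o(m)$ from $\sum_r|G(r)|/r<\infty$, but this takes additional work you have not supplied.

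The paper's proof of~(5) is a one-liner: take $m=0$ in Corollary~\ref{relation}, so that $\sum_{r\mid 0}G(r)/r=\sum_{r\ge1}G(r)/r$ equals $\lim_{x\to\infty} x^{-1}\sum_{n\le x}f(n)\overline{f(n)}=1$. Alternatively, your own closed form already gives a direct, hypothesis-free route: telescoping $g(p^k)A_p(k)$ over $k\ge1$ yields $\sum_{k\ge1}g(p^k)A_p(k)/p^k=\sum_{l\ge1}(1-f(p^l))/p^l$, and hence
\[
\sum_{k\ge0}\frac{E_p(p^k)}{p^k}
=2\Bigl(1-\tfrac1p\Bigr)A_p(0)-1+2\Bigl(1-\tfrac1p\Bigr)\sum_{l\ge1}\frac{1-f(p^l)}{p^l}
=2\Bigl(1-\tfrac1p\Bigr)\cdot\frac{p}{p-1}-1=1
\]
for every prime $p$, so $\sum_r G(r)/r=\prod_p 1=1$.
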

\begin{proof}
Note that $g(2)=-2$ and $g(2^{i})=f(2^i)-f(2^{i-1})=0$ for $i\ge 2.$ Thus $G(4a)=0$ and $G(2a)=-4G(a)$ for odd $a.$ The third part immediately follows from 
\begin{align*}
\sum_{a\ge 1}\frac{G(a)}{a^2}=\sum_{a\ge 1,\ a\ odd}\frac{G(a)}{a^2}+\sum_{a\ge 1,\ a\ odd}\frac{G(2a)}{(2a)^2}=0.
\end{align*}
To prove $(4),$ fix $p$ and suppose $p^k\vert|a.$ We note that for $k=0,$ the Euler factor 
\[E_p(a)=1+2\sum_{i\ge 1}\frac{g(p^i)}{p^i}\ge 1-\frac{4}{p-1}\ge 0\] for $p\ge 5.$ Note $E_2(a)=1-2=-1.$ 
If $3^0\vert| a,$ then $g(3)=f(3)-1=0$ and $E_3(a)\ge 1-\frac{4}{9}\cdot\frac{3}{2}=\frac{1}{3}>0.$ Suppose that $p^k\vert|a $ and $k\ge 1.$ Then,
\[E_p(a)=|g(p^k)|^2+2\sum_{i\ge k+1}\frac{g(p^k){g(p^i)}}{p^{i-k}}\ge 4- \frac{8}{p-1}\ge 0\]
for $p\ge 3.$ Hence the only negative Euler factor is $E_2$ and $(4)$ follows. To prove $(5),$ we take $m=0$ in Corollary~\ref{relation} to arrive at
\[\lim_{x\to\infty}\frac{1}{x}\sum_{n\le x}f(n)\overline{f(n+0)}=1=\sum_{a|0}\frac{G(a)}{a}=\sum_{a\ge 1}\frac{G(a)}{a}\cdot\]
\end{proof}
\begin{lemma}
Suppose $G(a)\ne 0.$ Then,
\[G(a)\gg  \left(\frac{5}{4}\right)^{\omega(a)-1}\cdot \frac{2}{5}\cdot G(1).\]
\end{lemma}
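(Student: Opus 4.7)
The plan is to exploit the Euler product structure: write
\[
\frac{G(a)}{G(1)} \;=\; \prod_{p\mid a}\frac{E_p(a)}{E_p(1)},
\]
where $E_p(r)$ denotes the local Euler factor of $G$ at $p$, namely
$E_p(r) = g(p^k)^2 + 2g(p^k)\sum_{i\ge k+1}g(p^i)/p^{i-k}$ with $p^k\vert\vert r$. The hypothesis $G(a)\neq 0$ rules out $E_p(a)=0$ for every $p\mid a$. Part~(1) of Lemma~\ref{properties} forces $4\nmid a$, so if $2\mid a$ then necessarily $2\vert\vert a$; a direct computation then gives the $p=2$ ratio $|E_2(a)/E_2(1)|=4/1=4$. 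For odd primes $p$ with $p^k\vert\vert a$, nonvanishing of $E_p(a)$ forces $g(p^k)=\pm 2$ (equivalently $f(p^k)=-f(p^{k-1})$ and hence $g(p^k)=2f(p^k)$).

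Next I would derive clean closed-form expressions for the local factors. A telescoping using $g(p^i)=f(p^i)-f(p^{i-1})$ produces, for any odd $p$ and $k\ge 1$ with $g(p^k)=2f(p^k)$,
\[
E_p(a) \;=\; 4f(p^k)\,V_{p,k},\qquad V_{p,k}:=\left(1-\tfrac{1}{p}\right)\sum_{j\ge 0}\frac{f(p^{k+j})}{p^j},
\]
and analogously
\[
E_p(1) \;=\; 2V_p - 1,\qquad V_p:=\left(1-\tfrac{1}{p}\right)\sum_{j\ge 0}\frac{f(p^j)}{p^j}
\]
(this is a routine manipulation of geometric series and is essentially the local Wirsing mean-value formula). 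Since $|f(p^{k+j})|\le 1$ with $|f(p^k)|=1$, the triangle inequality yields $|V_{p,k}|\ge (p-2)/p$; since additionally $f(1)=1$, one obtains the sharp two-sided bound $V_p\in[(p-2)/p,\,1]$.

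Combining these, $|E_p(a)|\ge 4(p-2)/p$ while $|E_p(1)|=|2V_p-1|\le 1$ for every odd $p$, so
\[
\left|\frac{E_p(a)}{E_p(1)}\right| \;\ge\; \frac{4(p-2)}{p} \;\ge\; \frac{4}{3},
\]
with the worst case occurring at $p=3$. Combined with $|E_2(a)/E_2(1)|=4$ when $2\mid a$, every local factor exceeds $5/4$, so
\[
\left|\frac{G(a)}{G(1)}\right| \;\ge\; \left(\frac{4}{3}\right)^{\omega(a)} \;\ge\; \left(\frac{5}{4}\right)^{\omega(a)}\;\ge\; \left(\frac{5}{4}\right)^{\omega(a)-1}\cdot\frac{2}{5},
\]
which is the claimed bound (the constant $2/5$ is evidently loose).

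The one place where care is required is the sharp upper bound $|E_p(1)|\le 1$: a naive triangle inequality applied term-by-term inside the defining series only produces $|E_3(1)|\le 5/3$, which is insufficient to obtain a ratio beating $5/4$. The sharp bound emerges only after reformulating $E_p(1)=2V_p-1$ in terms of the local mean value $V_p$ and exploiting $f(1)=1$, which pins $V_p$ inside $[(p-2)/p,1]$. With this reformulation in hand all remaining inequalities are routine.
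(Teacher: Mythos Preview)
Your proof is correct and in fact yields a sharper inequality than the paper's. Both arguments factor $G(a)/G(1)$ as a product of local ratios $E_p(a)/E_p(1)$ and bound each factor from below, but the bounds are obtained differently. The paper estimates the defining series for $E_p(a)$ and $E_p(1)$ directly via the triangle inequality (using $|g(p^i)|\le 2$ and the sign constraint $g(p^k)g(p^{k+1})\le 0$), which gives $E_p(a)\ge 4-8p/(p^2-1)$ and $E_p(1)\le 1+4p/(p^2-1)$; at $p=3$ this collapses to the ratio $\ge 2/5$, which is exactly the origin of the stray factor $\tfrac{2}{5}$ and the exponent $\omega(a)-1$ in the stated lemma. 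Your telescoping identity $E_p(1)=2V_p-1$ with $V_p=(1-1/p)\sum_{j\ge 0}f(p^j)/p^j$ is the genuinely new ingredient: the anchor $f(1)=1$ pins $V_p\in[(p-2)/p,1]$, so $|E_p(1)|\le 1$ uniformly for odd $p$, and combined with $|E_p(a)|=4|V_{p,k}|\ge 4(p-2)/p$ you obtain a local ratio $\ge 4/3$ at \emph{every} odd prime, with no exceptional treatment of $p=3$. Thus you actually prove $|G(a)/G(1)|\ge (4/3)^{\omega(a)}$, which is strictly stronger than the paper's bound; the constants $5/4$ and $2/5$ are, as you note, loose from your point of view. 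What the paper's cruder approach buys is that it avoids the need to recognise the closed form $2V_p-1$; what your approach buys is a cleaner argument, a uniform bound over all odd primes, and a better constant.
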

\begin{proof}
Recall,
\begin{align*}
G(a)=\prod_{p^k||a}\left(|g(p^k)|^2+2\sum_{i\ge k+1}\frac{g(p^k){g(p^i)}}{p^{i-k}}\right).
\end{align*}
Note $g(p^k)g(p^{k+1})\le 0$ and so if  $p^k\vert|a $ and $k\ge 1$ we have
\[E_p(a)=|g(p^k)|^2+2\sum_{i\ge k+1}\frac{g(p^k){g(p^i)}}{p^{i-k}}\ge 4- \frac{8}{p}\cdot\frac{1}{1-\frac{1}{p^2}}=4-\frac{8p}{p^2-1}\cdot\]
For $p=3$ the last bound reduces to $E_3(a)\ge 1$ and for $p\ge 5$ we clearly have $E_p(a)\ge 2.$ 
 For $k=0,$ we have
\[E_p(1)=1+2\sum_{i\ge 1}\frac{g(p^i)}{p^i}\le 1+\frac{4}{p}\cdot\frac{1}{1-\frac{1}{p^2}}=1+\frac{4p}{p^2-1}.\]
Consequently, for $k\ge 1$ and $p>3$
\[E_p(p^k)\ge \frac{5}{4}E_p(1).\]
Taking into account $p=3$ we conclude
\[|G(a)|=\left|\prod_{p^k||a,\ k\ge 1}\left(|g(p^k)|^2+2\sum_{i\ge k+1}\frac{g(p^k){g(p^i)}}{p^{i-k}}\right)\right|\ge \left(\frac{5}{4}\right)^{\omega(a)-1}\cdot \frac{2}{5}\cdot |G(1)|.\]
\end{proof}
In fact, it is easy to check that $G(1)\ne 0$ and thus the last lemma provides nontrivial lower bound for $G(a).$
In the next lemma we compute the second moment of the partial sums over the interval of fixed length.
\begin{lemma}\label{correlation}
Let $H\in\mathbb{N}.$ Then
\[\frac{1}{x}\sum_{n\le x}\left(\sum_{k=n+1}^{n+H+1}f(k)\right)^2=-2\sum_{a\ge 1,\ a\ odd}G(a)\left\|\frac{H}{2a}\right\|+o_{x\to\infty}(1).\]
\end{lemma}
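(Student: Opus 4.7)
My plan is to expand the square, invoke Corollary~\ref{relation} to evaluate each resulting two-point correlation, swap the order of summation, and then exploit the parity relations for $G$ recorded in Lemma~\ref{properties} to collapse the expression into the claimed fractional-part sum.

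I first expand
\[
\Bigl(\sum_{k=n+1}^{n+H+1}f(k)\Bigr)^{2}=\sum_{|h|\le H}\sum_{k_{1}}f(k_{1})f(k_{1}+h),
\]
grouping pairs by their common difference $h=k_{2}-k_{1}$. Averaging over $n\le x$ weights each $h$ by the number $H+1-|h|$ of pairs realising it; the diagonal $h=0$ contributes $H+1$ because $|f|=1$, and Corollary~\ref{relation} evaluates each off-diagonal average $\tfrac{1}{x}\sum_{n\le x}f(n)f(n+h)$ as $\sum_{a\mid h}G(a)/a+o_{x\to\infty}(1)$. Putting $a$ on the outside and writing $h=ak$ with $1\le k\le N_{a}:=\lfloor H/a\rfloor$, the inner arithmetic progression is
\[
\sum_{k=1}^{N_{a}}(H+1-ak)=\tfrac{1}{2}N_{a}(H+R_{a}+2-a),\qquad R_{a}:=H-aN_{a},
\]
so up to $o(1)$ the left-hand side equals $(H+1)+\sum_{a\ge 1}\tfrac{G(a)}{a}N_{a}(H+R_{a}+2-a)$.

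Next I would exploit the structure of $G$. Lemma~\ref{properties}(1) kills every term with $4\mid a$, while (2) identifies the contribution at $a=2b$ ($b$ odd) as $-2G(b)/b$ times its inner sum. Hence for each odd $b$ the combined contribution of $a\in\{b,2b\}$ becomes
\[
\frac{G(b)}{b}\bigl[N_{b}(H+R_{b}+2-b)-2N_{2b}(H+R_{2b}+2-2b)\bigr].
\]
Writing $H=2bQ+S$ with $0\le S<2b$, a two-case computation ($S<b$ versus $b\le S<2b$) evaluates $N_{b},R_{b},N_{2b},R_{2b}$ explicitly and shows uniformly that the bracket reduces to a linear-in-$H$ expression minus $2b\|H/(2b)\|$; the two cases precisely match the two branches of the $\min$ defining the norm. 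Summing over odd $b$ and invoking the identity $\sum_{b\text{ odd}}G(b)/b=-1$ — which itself follows from Lemma~\ref{properties}(5) combined with (1), (2), since $1=\sum_{a}G(a)/a=(1-2)\sum_{b\text{ odd}}G(b)/b$ — the linear-in-$H$ contributions telescope against the leading term coming from the diagonal, leaving exactly $-2\sum_{a\text{ odd}}G(a)\|H/(2a)\|$.

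The main obstacle is the two-case arithmetic in the middle step: the piecewise formulas for $N_{b},R_{b}$ change abruptly at $S=b$ and must be dovetailed with the min in $\|H/(2b)\|$, and the linear-in-$H$ constants must be tracked carefully so that they cancel cleanly via $\sum_{b\text{ odd}}G(b)/b=-1$. Beyond this elementary identity the argument is a single appeal to Corollary~\ref{relation} plus the Euler-factor bookkeeping already tabulated in Lemma~\ref{properties}.
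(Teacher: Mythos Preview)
Your overall strategy is correct and close to the paper's, but the ``two-case computation'' does not come out uniformly as you claim. Taking the displayed upper limit $n+H+1$ at face value gives $H+1$ summands and weight $H+1-|h|$; with that weight and $H=2bQ+S$, one finds
\[
N_b(H+R_b+2-b)-2N_{2b}(H+R_{2b}+2-2b)=
\begin{cases}
H-S, & 0\le S<b,\\
H+S-2b+2, & b\le S<2b.
\end{cases}
\]
The stray $+2$ in the second range, summed against $G(b)/b$ over those odd $b$ with $\lfloor H/b\rfloor$ odd, is an $H$-dependent quantity that does not telescope, so the cancellation via $\sum_{b\text{ odd}}G(b)/b=-1$ leaves an uncontrolled remainder. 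The source of the trouble is a typo in the statement: the paper's own proof uses weight $H-|h|$ (equivalently an inner sum of length $H$), and with that weight the bracket becomes $H-S$ in the first case and $H+S-2b$ in the second, i.e.\ $H-2b\lVert H/(2b)\rVert$ uniformly. Then your cancellation goes through exactly and yields the stated formula.

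Once the weight is corrected, your route and the paper's differ only in the final algebra. The paper absorbs the diagonal into the $a$-sum (via property~(5)), rewrites each coefficient as $H^{2}/a+a\,\{H/a\}(1-\{H/a\})$, annihilates the $H^{2}$ piece using property~(3) ($\sum_{a}G(a)/a^{2}=0$), and then appeals to the single identity
\[
\{2x\}(1-\{2x\})-4\{x\}(1-\{x\})=-2\lVert x\rVert
\]
with $x=H/(2a)$, avoiding any case split. Your version instead keeps the diagonal separate, handles the two residue ranges by hand, and removes the surviving $H$-linear term through $\sum_{b\text{ odd}}G(b)/b=-1$ (which you correctly derive from (5), (1), (2)). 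Both packagings are valid; the paper's is a bit slicker because it trades the piecewise bookkeeping for recognizing the quadratic-plus-fractional decomposition and one clean fractional-part identity.
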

\begin{proof}
Note
\begin{align*}
\frac{1}{x}\sum_{n\le x}\left(\sum_{k=n+1}^{n+H+1}f(k)\right)^2&=\frac{1}{x}\left[\sum_{h=0,\ n\le x}Hf(n)f(n+h)+ 2\sum_{1\le h\le H}(H-h)\sum_{n\le x}f(n)f(n+h)\right]+o(1)\\&=\sum_{a\ge 1}\frac{G(a)}{a}\left(H+2\sum_{\substack{1\le h\le H,\\ a| h}}(H-h)\right)+o_{x\to\infty}(1)
\end{align*}
To compute the corresponding coefficient we write $H=ra+s,$ $0\le s<a$ to arrive at
\begin{align*}
ra+s+2\sum_{1\le m\le r}(ra+s-ma)&=ra+s+ar(r-1)+2rs\\&=\frac{(ra+s)^2}{a}+a\left(\frac{s}{a}-\left(\frac{s}{a}\right)^2\right).
\end{align*}
Plugging this into our formula  and using $(4),$ $(1),$ $(2)$ from the Lemma~\ref{properties} we get
\begin{align*}
H^2\sum_{a\ge 1}\frac{G(a)}{a^2}&+\sum_{a\ge 1}G(a)\left(\left\{\frac{H}{a}\right\}-\left\{\frac{H}{a}\right\}^2\right)=\sum_{a\ge 1}G(a)\left(\left\{\frac{H}{a}\right\}-\left\{\frac{H}{a}\right\}^2\right)\\&=\sum_{a\ge 1,\ a\ odd}G(a)\left[\left(\left\{\frac{H}{a}\right\}-\left\{\frac{H}{a}\right\}^2\right)-4\left(\left\{\frac{H}{2a}\right\}-\left\{\frac{H}{2a}\right\}^2\right)\right]\\&=-2\sum_{a\ge 1,\ a\ odd}G(a)\left\|\frac{H}{2a}\right\|,
\end{align*}
since 
\[\left(\left\{\frac{H}{a}\right\}-\left\{\frac{H}{a}\right\}^2\right)-4\left(\left\{\frac{H}{2a}\right\}-\left\{\frac{H}{2a}\right\}^2\right)=-2\left\|\frac{H}{2a}\right\|.\]
\end{proof}
We are now ready to prove Theorem~\ref{tao}.
\begin{theoremn}[\ref{tao}]
Let $f:\mathbb{N}\to\{-1,1\}$ be a multiplicative function. Then
\begin{align*}\limsup_{x\to\infty}\left|\sum_{n\le x}f(n)\right|<\infty,\end{align*} if and 
only if there exists an integer $m\ge 1$ such that $f(n+m)=f(n)$ for all $n\ge 1$ and $\sum_{n=1}^m f(n)=0.$
\end{theoremn}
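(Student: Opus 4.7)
The reverse implication is immediate: if $f$ has period $m$ with $\sum_{n=1}^m f(n)=0$, then writing $x=qm+r$ with $0\le r<m$ gives $S(x)=\sum_{n=1}^r f(n)$, whence $|S(x)|\le m$. For the forward direction, Tao's Theorem~\ref{taopret} already supplies $f(2^k)=-1$ for all $k\ge 1$ and $\mathbb{D}(1,f;\infty)<\infty$. Writing $f=1*g$, Proposition~\ref{equiv} reduces the task to proving that only finitely many prime powers $p^k$ satisfy $g(p^k)\ne 0$; fix $B$ with $|S(x)|\le B$ throughout.

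The key step is to promote the pointwise bound $T(H)\le 4B^2$ to an $L^2$-bound. Since $|S(n+H+1)-S(n)|\le 2B$, Lemma~\ref{correlation} yields
\[
0\;\le\; T(H)\;:=\;-2\sum_{a\text{ odd}} G(a)\,\Bigl\|\tfrac{H}{2a}\Bigr\|\;\le\;4B^2\qquad (H\ge 0),
\]
whence $\limsup_{N\to\infty}\tfrac{1}{N}\sum_{H=1}^N T(H)^2\le 16B^4$. Expanding $T(H)^2$ and using the Fourier expansion $\|x\|=\tfrac14-\tfrac{2}{\pi^2}\sum_{n\text{ odd}}\cos(2\pi nx)/n^2$, the averages
\[
\alpha(a,a')\;:=\;\lim_{N\to\infty}\tfrac{1}{N}\sum_{H=1}^N\bigl\|\tfrac{H}{2a}\bigr\|\bigl\|\tfrac{H}{2a'}\bigr\|
\]
evaluate to $\tfrac{1}{16}+O(1/aa')$ for $a\ne a'$ and to $\tfrac{1}{12}+O(1/a^2)$ for $a=a'$. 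The bilinear form $4\sum_{a,a'}G(a)G(a')\alpha(a,a')$ therefore decomposes as a rank-one constant-mode piece $\tfrac14(\sum_a G(a))^2$ plus a strictly positive multiple of $\sum_a G(a)^2$ plus small cross terms; both principal pieces being non-negative, the boundedness of the $L^2$-average forces $\sum_{a\text{ odd}} G(a)^2<\infty$.

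Finally, invoke the multiplicative structure of $G$. For odd primes $p\ge 3$, the Euler-factor bound $E_p(p^k)\ge 4-8p/(p^2-1)>0$ holds whenever $g(p^k)\ne 0$, so $G(p^k)=E_p(p^k)\cdot G(1)/E_p(1)$ is nonzero exactly when $g(p^k)\ne 0$; and the lemma $|G(a)|\gg(5/4)^{\omega(a)-1}G(1)$ gives $|G(a)|\ge c>0$ uniformly on the support of $G$. Combined with $\sum_a G(a)^2<\infty$, the support of $G$ is finite, so only finitely many odd prime powers $p^k$ have $g(p^k)\ne 0$. Since $f(2^k)=-1$ for all $k\ge 1$ automatically gives $g(2^k)=0$ for $k\ge 2$, the entire set $\{p^k:g(p^k)\ne 0\}$ is finite, and Proposition~\ref{equiv} yields the periodicity with vanishing period sum. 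The main technical obstacle lies in the Fourier diagonalization step: interchanging the $H$-average with the double sum $\sum_{a,a'}$ (the series $\sum_a G(a)\|H/(2a)\|$ converging only conditionally), and obtaining quantitative diagonal dominance of the matrix $\alpha(a,a')$ uniformly in $a,a'$, so that the second-moment inequality genuinely controls $\sum_a G(a)^2$ and not merely the constant mode $(\sum_a G(a))^2$.
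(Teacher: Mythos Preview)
Your overall strategy---pass to the second moment $\frac{1}{N}\sum_{H\le N}T(H)^2$ and extract $\sum_a G(a)^2<\infty$ from diagonal dominance of the kernel $\alpha(a,a')$---is genuinely different from the paper's, but the diagonalization step does not go through as stated. The claim $\alpha(a,a')=\tfrac{1}{16}+O(1/(aa'))$ for $a\ne a'$ is false whenever $\gcd(a,a')$ is large. A short Fourier computation gives, for odd $a=dA$, $a'=dA'$ with $\gcd(A,A')=1$,
\[
\alpha(a,a')-\tfrac{1}{16}\;=\;\frac{2}{\pi^4 A^2A'^2}\sum_{\substack{M,N\ \mathrm{odd}}}\frac{\mathbb{1}_{2d\mid M-N}+\mathbb{1}_{2d\mid M+N}}{M^2N^2}\;\asymp\;\frac{1}{A^2A'^2}\;=\;\frac{d^2}{a^2a'^2},
\]
so for instance $\alpha(a,3a)-\tfrac{1}{16}$ tends to a \emph{positive constant} (about $1/432$) as $a\to\infty$, not to $0$. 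Thus the ``small cross terms'' are not small: the off-diagonal part of the bilinear form has the same order of magnitude as the diagonal on chains $a,3a,9a,\dots$, and there is no evident spectral gap guaranteeing $\sum_{a,a'}G(a)G(a')(\alpha(a,a')-\tfrac{1}{16})\ge c\sum_a G(a)^2$. Parseval only yields control of $\sum_{q\ \mathrm{odd}}\bigl(\sum_{k\ \mathrm{odd}}G(qk)/k^2\bigr)^2$, which is strictly weaker. A second difficulty you flag but do not address is that $\sum_{a\ \mathrm{odd}}G(a)$ need not even converge (nothing forces $G(a)\to 0$), so the ``constant-mode'' piece $\tfrac14(\sum_a G(a))^2$ is not a priori meaningful, and interchanging $\lim_N$ with the double sum over $a,a'$ is genuinely delicate.

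By contrast, the paper bypasses any spectral argument entirely. It exploits the \emph{sign} of $G(a)$: when $f(3)=1$ one has $G(a)\le 0$ for all odd $a$, so choosing $H=\mathrm{lcm}(a_1,\dots,a_M)$ forces each $\|H/(2a_i)\|=\tfrac12$ and the terms add constructively, giving $T(H)\gg M$. When $f(3)=-1$ the sign of $E_3(1)$ may flip, and the paper instead factors out the $3$-part, setting $S(H)=-2\sum_{(a,6)=1}G^*(a)\|H/(2a)\|$ and deriving the recursion $S(3H)\ge 6S(H)-O(1)$ from the relation $T(H)=\sum_{i\ge 0}E_3(3^i)S(H/3^i)$; iterating gives $S(3^nH_0)\gg 5^n$, contradicting the easy upper bound $S(H)=o(H)$. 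This case split is exactly what handles the scenario (e.g.\ $f(3^i)=(-1)^i$, where $E_3(1)=0$ and $G(1)=0$) that makes your uniform lower bound $|G(a)|\ge c$ on the support most fragile. Your $L^2$ route may ultimately be salvageable, but it would require a genuinely new inequality for the kernel $(\alpha(a,a'))_{a,a'}$, not the near-orthogonality you assert.
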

\begin{proof}
If $f$ satisfies $\sum_{i=1}^mf(i)=0$ and $f(n)=f(n+m)$ for some $m\ge 1,$
then for all $x\ge 1,$
$$\left|\sum_{n\le x}f(n)\right|\le m$$
and the claim follows.
In the other direction, we assume $|\sum_{n\le x}f(n)|=O_{x\to\infty}(1).$ By Theorem~\ref{taopret} we must have $f(2^i)=-1$ for all $i\ge 1$ and 
$\mathbb{D}(1,f,\infty)<\infty.$
By the Lemma~\ref{correlation} we must have that for all $H\ge 1,$
\[\frac{1}{x}\sum_{n\le x}\left(\sum_{k=n+1}^{n+H+1}f(k)\right)^2=-2\sum_{a\ge 1,\ a\ odd}G(a)\left\|\frac{H}{2a}\right\|+o_{x\to\infty}(1)=O_{x\to\infty}(1).\]

Suppose that there is an infinite sequence of odd numbers $\{a_n\}_{n\ge 1}$ such that $g(a_n)\ne 0.$ Observe, $|G(a_n)|\gg 1.$ Choose $H=\text{lcm}[a_1,\dots a_M].$ If $f(3)=1,$ then by Lemma~\ref{properties}, part $(4)$ we have 
\[-2\sum_{a\ge 1,\ a\ odd}G(a)\left\|\frac{H}{2a}\right\|\ge -2\sum_{1\le n\le M}G(a_n)\left\|\frac{H}{2a_n}\right\|\gg M.\] 
This is clearly impossible if $M$ is sufficiently large.

Suppose $f(3)=-1.$ Let 
\[G^*(a)=\prod_{p^k\vert|a,\ p>3}\left(|g(p^k)|^2+2\sum_{i\ge k+1}\frac{g(p^k){g(p^i)}}{p^{i-k}}\right)\] 
and 
\[S(H)=-2\sum_{a\ge 1,\ (a,6)=1}G^*(a)\left\|\frac{H}{2a}\right\|.\]
Note that 
\begin{equation}\label{recursion}
-2\sum_{a\ge 1,\ a\ odd}G(a)\left\|\frac{H}{2a}\right\|=\sum_{i\ge 0}E_3\left(3^i\right)S\left(\frac{H}{3^i}\right)=O(1).
\end{equation}
If $E_3(1)\ge 0$ then we proceed as in the previous case. If $E_3(1)<0,$ then $g(3)=f(3)-1=-2.$ Since $g(p^k)g(p^{k+1})\le 0$ for all $k\ge 0$ we get 
\[E_3(3)\ge 4-\frac{8}{9}\cdot\frac{1}{1-\frac{1}{9}}\ge 3\] 
and
\[0>E_3(1)=1+2\sum_{i\ge 1}\frac{g(3^i)}{3^i}\ge 1-\frac{4}{3}\cdot\frac{1}{1-\frac{1}{9}}=-\frac{1}{2}.\]
Since $E_3(3^k)\ge 0$ for all $k\ge 1,$ applying triangle inequality in~\eqref{recursion} yields  
\begin{equation}\label{iterate}
S(H)\ge \frac{E_3(3)S\left(\frac{H}{3}\right)}{-E_3(1)}+O(1)\ge 6S\left(\frac{H}{3}\right)-M.
\end{equation}
If there is an infinite sequence $\{b_n\}_{n\ge 1}$ such that $g(b_n)\ne 0$ and $(b_n,6)=1,$ then we  
select $H_0$  as before such that $S(H_0)\ge M$ and $S(3H_0)\ge M.$ Then~\eqref{iterate} yields 
$S(3H_0)\ge 5S(H_0).$
By induction one easily gets that for all $n\ge 1,$ 
\[S(3^nH_0)\ge 5^nS(H_0).\]
This implies, that for the sequence $H_n=3^nH_0$ we have $S(H_n)\gg H_n^{1+c}.$ From the other hand
\[\sum_{a\ge H,\ (a,6)=1}\frac{G^*(a)}{a}=o_{H\to\infty}(1)\] and so
\begin{align*}
S(H)=-2\sum_{a\ge 1,\ (a,6)=1}G^*(a)\left\|\frac{H}{2a}\right\|&\ll \sum_{a\le H,\ (a,6)=1}G^*(a)+H\sum_{a\ge H,\ (a,6)=1}\frac{G^*(a)}{a}\\&\ll \sqrt{H}\sum_{a\le \sqrt{H},\ (a,6)=1}\frac{G^*(a)}{a}+H\sum_{\sqrt{H}\le a\le H,\ (a,6)=1}\frac{G^*(a)}{a}+o(H)
\end{align*}
and so $S(H)=o(H).$

To finish the proof we are left to handle the case $g(3^k)\ne 0$ for infinitely many $k\ge 1$ and there exists finitely many $b_1,b_2\dots,b_m$ $(b_i,6)=1,$ $i\ge 1$ and $g(b_i)\ne 0.$ In this case we have 
\[S(H)\le \sum_{i=1}^m G^*(b_i):=M.\]  Choose $H_0=\text{lcm}[b_1,\dots,b_m]$ and observe that $S(3^kH_0)\ge M/2$ for $k=1,\dots K.$ Then,
\begin{align*}
-2\sum_{a\ge 1,a\ odd}G(a)\left\|\frac{3^KH_0}{2a}\right\|&=\sum_{i\ge 0}E_3\left(3^i\right)S\left(\frac{3^KH_0}{3^i}\right)\\&\ge \sum_{1\le i\le K}E_3\left(3^i\right)S\left(\frac{3^KH_0}{3^i}\right)-E_3(1)S(H_0)\\&\ge \frac{M}{2}\sum_{1\le i\le K}E_{3}(3^k)-M.
\end{align*}
The last sum is bounded if $E_3(3^k)=0$ for all $k\ge K_0.$ Consequently, $f(3^{k})=f(3^{k+1})$ for $k\ge K_0$ and the result follows.
\end{proof}
\end{section}
\begin{section}{Applications to the conjecture of K\'atai}
Let $f:\mathbb{N}\to \mathbb{C}$ be a multiplicative function
and $\triangle f(n)=f(n+1)-f(n).$ In this section we focus on proving
 \begin{theoremn}[\ref{introkatai}]
If $f:\mathbb{N}\to \mathbb{C}$ is a multiplicative function and
\begin{equation}\label{kataidifference} 
\lim_{x\to\infty}\frac{1}{x}\sum_{n\le x}|\triangle f(n)|=0
\end{equation}
then either 
\[\lim_{x\to\infty}\frac{1}{x}\sum_{n\le x}|f(n)|=0\] 
or
$f(n)=n^{s}$ for some $\operatorname{Re}(s)<1.$
\end{theoremn}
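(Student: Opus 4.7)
Suppose we are not in the first alternative, so that $\frac{1}{x}\sum_{n\le x}|f(n)|\not\to 0$. The plan is to reduce $f$ to the unimodular, pretentious setting and then exploit the correlation formula in Corollary~\ref{keytotao}. Since $\bigl||f(n+1)|-|f(n)|\bigr|\le|\triangle f(n)|$, the non-negative multiplicative function $|f|$ inherits the small-difference hypothesis. Applying the averaged Erd\H{o}s--K\'atai--Wirsing theorem on additive functions (quoted in the introduction) to $h(n)=\log|f(n)|$ on the support of $f$ forces $h(n)=c\log n$, i.e.\ $|f(n)|=n^\sigma$. Negative $\sigma$ contradicts our assumption that $|f|$ does not have vanishing mean, and $\sigma\ge 1$ is excluded since then $\bigl||f(n+1)|-|f(n)|\bigr|=(n+1)^\sigma-n^\sigma$ would be bounded below away from $0$ on average. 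Hence $\sigma\in[0,1)$. Writing $f(n)=n^\sigma g(n)$ with $g$ unimodular multiplicative, the inequality $|\triangle g(n)|\le|\triangle f(n)|/n^\sigma+O(1/n)$ combined with $\sigma\ge 0$ transfers the hypothesis to $g$, giving $\frac{1}{x}\sum_{n\le x}|\triangle g(n)|\to 0$.

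For the unimodular function $g$, the trivial bound $|\triangle g|\le 2$ upgrades $L^1$ convergence to $L^2$, and expanding $|g(n+1)-g(n)|^2=2-2\operatorname{Re}g(n+1)\overline{g(n)}$ yields
\[
\frac{1}{x}\sum_{n\le x}g(n)\overline{g(n+1)}\longrightarrow 1.
\]
Since this correlation does not vanish, Tao's Theorem~\ref{averagedelliot} (applied exactly as in the proof of Lemma~\ref{generaltao}) furnishes a primitive character $\chi$ of some conductor $q$ and a real number $t$ such that $\mathbb{D}(g,\chi n^{it};\infty)<\infty$. Corollary~\ref{keytotao} then identifies
\[
\frac{1}{x}\sum_{n\le x}g(n)\overline{g(n+1)}=\frac{\mu(q)}{q}\prod_{p\nmid q}\left(2\operatorname{Re}\left(1-\frac{1}{p}\right)\sum_{k\ge 0}\frac{g(p^k)\overline{\chi(p^k)}p^{-ikt}}{p^k}-1\right)+o(1).
\]

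Setting $F(p^k)=g(p^k)\overline{\chi(p^k)}p^{-ikt}$, a short computation shows that every Euler factor $M_p:=M_p(F,\overline F;1)$ lies in the interval $[1-4/p,1]\subseteq[-1,1]$. Combined with $|\mu(q)/q|\le 1/q$ and the fact that the left-hand side tends to the positive real $1$, this forces $q=1$ and $|M_p|=1$ for every prime; a direct case analysis shows that $M_p=-1$ is only possible at $p=2$ (and even then, an isolated factor $-1$ would flip the overall sign of the product), so in fact $M_p=1$ for every $p$. Equality $M_p=1$ is equivalent to $F(p^k)=1$ at every prime power, hence $g(n)=n^{it}$ and $f(n)=n^{\sigma+it}=n^s$ with $\operatorname{Re}(s)=\sigma\in[0,1)$, as required. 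The principal obstacle lies in Step~1: rigorously applying the additive-function theorem to $h(n)=\log|f(n)|$ requires careful treatment of the zero set of $f$, and the passage from $f$ to $g=f/n^\sigma$ depends on the averaged small-difference hypothesis propagating cleanly under division by $n^\sigma$, which is only automatic in the range $\sigma\in[0,1)$ identified above.
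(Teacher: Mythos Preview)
Your argument from the unimodular reduction onward is essentially the paper's own proof: once $|g|\equiv 1$ and $\frac{1}{x}\sum_{n\le x}|\triangle g(n)|\to 0$, you pass to $L^2$, extract $\frac{1}{x}\sum g(n)\overline{g(n+1)}\to 1$, invoke Tao's theorem via Lemma~\ref{generaltao} to obtain $\mathbb{D}(g,\chi n^{it};\infty)<\infty$, and then analyse the Euler product in Corollary~\ref{keytotao} factor by factor to force $q=1$ and $g(p^k)=p^{ikt}$. This is exactly the content of Lemma~\ref{kataireduction}, Proposition~\ref{pretentiouskatai} and Corollary~\ref{largecor}, modulo the cosmetic difference that the paper phrases everything in terms of logarithmic averages. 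Your Euler-factor bound $M_p\in[1-4/p,1]$ and the sign argument ruling out $M_2=-1$ are correct.

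The genuine gap is Step~1, and it is more serious than you indicate. The Erd\H{o}s--K\'atai theorem on additive functions requires \emph{both} that $h$ be an additive function on all of $\mathbb{N}$ and that $\frac{1}{x}\sum_{n\le x}|h(n+1)-h(n)|\to 0$. Setting $h=\log|f|$ fails on both counts: if $f$ vanishes anywhere then $h$ is not even defined on $\mathbb{N}$, and ``restricting to the support'' destroys additivity. Even where $f$ is nonzero, the hypothesis you have is $\frac{1}{x}\sum\bigl||f(n+1)|-|f(n)|\bigr|\to 0$, which does \emph{not} imply $\frac{1}{x}\sum|\log|f(n+1)|-\log|f(n)||\to 0$ without a uniform lower bound on $|f|$; the logarithm is not Lipschitz near $0$. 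So the deduction $|f(n)|=n^\sigma$ is not justified.

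The paper does not attempt this route at all. It simply quotes K\'atai~\cite{MR1764803} (building on Mauclaire--Murata~\cite{MR603061}), who had already shown by separate arguments that the general multiplicative case of Theorem~\ref{introkatai} reduces to the unimodular one. You should cite the same reduction; your heuristic via $\log|f|$, while suggestive, does not constitute a proof.
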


In~\cite{MR1764803}, K\'atai, building on the ideas of Maclauire and Murata~\cite{MR603061}, showed that in order to prove Theorem~\ref{introkatai}, it is enough to consider multiplicative $f,$ with $|f(n)|=1$ for all $n\ge 1.$ 
Observe, that if we denote
\[S(x)=\frac{1}{x}\sum_{n\le x}|\triangle (n)|\] then~\eqref{kataidifference} implies

 \[\sum_{n\le x}\frac{|\triangle f(n)|^2}{n}\le \sum_{n\le x}\frac{2|\triangle f(n)|}{n}\ll \int_{1}^x\frac{S(t)}{t^2}dt=o_{x\to\infty}(\log x).\]
We begin by proving the following lemma.
\begin{lemma}\label{kataireduction}
Suppose that $f:\mathbb{N}\to \mathbb{T}$ is multiplicative and
\[\sum_{n\le x}\frac{|\triangle f(n)|^2}{n}\le 2(1-\varepsilon)\log x\]
for $x$ sufficiently large and some $\varepsilon<1.$
 Then, there exists a primitive character $\chi_1(n)$ and $t_{\chi_1}\in\mathbb{R}$ such that $\mathbb{D}(f(n),\chi_1(n)n^{it_{\chi_1}};\infty)<\infty.$
\end{lemma}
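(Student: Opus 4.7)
The plan is to convert the $L^2$-difference hypothesis into a positive shift-$1$ logarithmic correlation for $f$, apply Tao's averaged Elliott inequality (Theorem~\ref{averagedelliot}) to force pretentiousness at each scale, and then stabilise the resulting frequency exactly as in the proof of Lemma~\ref{generaltao}.

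Since $|f(n)|=1$ for all $n$, expanding the square gives
$$|\triangle f(n)|^2 = 2 - 2\operatorname{Re}(f(n+1)\overline{f(n)}),$$
so the hypothesis rearranges, after dividing by $n$ and summing, into
$$\operatorname{Re}\sum_{n\le x}\frac{f(n+1)\overline{f(n)}}{n} \ge \varepsilon\log x + O(1)$$
for every sufficiently large $x$. I now apply Theorem~\ref{averagedelliot} with $g_1=\overline{f}$, $g_2=f$, $(a_1,b_1)=(1,0)$, $(a_2,b_2)=(1,1)$ and $\omega=x$. If $\overline{f}$ were ``non-pretentious'' in the strong sense required there for some suitably large $A$, the theorem would bound the correlation above by $\varepsilon''\log x$ for any prescribed $\varepsilon''>0$, contradicting the inequality once $\varepsilon''<\varepsilon/2$. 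Hence, for every sufficiently large $x$, there exist a Dirichlet character $\chi_x$ of conductor at most $A$ and a real $t_x$ with $|t_x|\le Ax$ such that
$$\mathbb{D}(f,\chi_x\, n^{it_x};x)\ll 1.$$

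To remove the $x$-dependence of $(\chi_x,t_x)$, I follow the device from the proof of Lemma~\ref{generaltao}. Only finitely many characters of conductor $\le A$ exist, so their orders admit a common multiple $k$, for which $\chi_x^k\equiv 1$ on primes above $A$. The triangle inequality for $\mathbb{D}$ then yields $\mathbb{D}(f^k, n^{ikt_x};x)\ll 1$ uniformly in $x$. Since also $|kt_x|\le kAx$, Elliott's Lemma~\ref{pretenditerations} applies and produces a single constant $C$ with
$$kt_x = C + O(1/\log x) \quad \text{and} \quad \mathbb{D}(f^k, n^{iC};\infty)<\infty.$$
Plugging $t_x = C/k + O(1/\log x)$ back into the scale-dependent estimate, one further application of the triangle inequality and a pigeonhole over the finitely many possible $\chi_x$ produces a fixed primitive character $\chi_1$ such that $\mathbb{D}(f,\chi_1\, n^{it_{\chi_1}};\infty)<\infty$ with $t_{\chi_1}=C/k$, which is the conclusion required.

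The delicate part of the argument is this last stabilisation step: Tao's theorem alone only furnishes an $x$-dependent frequency $t_x$ that could a priori drift, and a single character $\chi_x$ that could in principle vary with $x$. The combination of the power-raising trick---which absorbs $\chi_x$ and makes Elliott's lemma applicable to $f^k$---with Lemma~\ref{pretenditerations} is what pins $t_x$ down to a single real up to an $O(1/\log x)$ error, thereby upgrading the family of scale-dependent bounds into the global bound $\mathbb{D}(f,\chi_1 n^{it_{\chi_1}};\infty)<\infty$.
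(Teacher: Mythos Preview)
Your proposal is correct and follows essentially the same approach as the paper. The paper's own proof is simply a two-line reduction: it expands $|\triangle f(n)|^2=2-2\operatorname{Re}(f(n)\overline{f(n+1)})$ to obtain $\sum_{n\le x}\frac{\operatorname{Re}(f(n)\overline{f(n+1)})}{n}\ge \varepsilon\log x$, and then observes that the proof of Lemma~\ref{generaltao} only used that the logarithmic shift correlation is $\gg\log x$, so the remainder of that argument (Tao's Theorem~\ref{averagedelliot} followed by the power-raising trick and Elliott's Lemma~\ref{pretenditerations}) carries over verbatim. You have simply written out that ``remainder'' in full rather than citing it, which is perfectly fine.
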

\begin{proof}
We note that
\[\sum_{n\le x}\frac{\operatorname{Re}f(n)\overline{f(n+1)})}{n}\ge \varepsilon \log x.\]
We can now apply Lemma~\ref{generaltao}, since the only fact that was used in the proof is  that logarithmic correlation is large to conclude the result.  
\end{proof}
\begin{remark} The conclusion of the lemma also holds if $f:\mathbb{N}\to\mathbb{T}$
satisfies
\[\sum_{n\le x}\frac{|\triangle f(n)|^2}{n}\ge 2(1+\varepsilon)\log x\]
for some $\varepsilon>0.$ In other words, if $\sum_{n\le x}\frac{|\triangle f(n)|^2}{n}$ is bounded away from $2\log x,$ then $\mathbb{D}(f(n),\chi_1(n)n^{it_{\chi_1}};\infty)<\infty.$
\end{remark}
\begin{prop}\label{pretentiouskatai}
Let $f:\mathbb{N}\to \mathbb{T}$ be a multiplicative function and $\mathbb{D}(f,n^{it}\chi(n);\infty)<\infty$ for some $t\in\mathbb{R}$ and a primitive character $\chi$ of conductor $q.$ Then
\[\sum_{n\le x}\frac{|\triangle f(n)|^2}{n}=2(1-E(f)+o(1))\log x\]
where 
\[E(f)=\frac{\mu(q)}{q}\prod_{\substack{p\ge 1\\ p\nmid q}}\left(2\operatorname{Re}\left(1-\frac{1}{p}\right)\left(\sum_{k\ge 0}\frac{f(p^k)\overline{\chi(p^k)}p^{-ikt}}{p^k}\right)-1\right).\] 
\end{prop}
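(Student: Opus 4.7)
The plan is to reduce the second-moment average to the two-point correlation $\sum_{n\le x}f(n)\overline{f(n+1)}/n$, and then invoke Corollary~\ref{keytotao} together with Abel summation.

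First, since $|f(n)|=1$ for all $n$, we may expand
\begin{equation*}
|\triangle f(n)|^2=|f(n+1)-f(n)|^2=2-2\operatorname{Re}\bigl(f(n)\overline{f(n+1)}\bigr),
\end{equation*}
so that
\begin{equation*}
\sum_{n\le x}\frac{|\triangle f(n)|^2}{n}=2\log x+O(1)-2\sum_{n\le x}\frac{\operatorname{Re}\bigl(f(n)\overline{f(n+1)}\bigr)}{n}.
\end{equation*}
Thus the proposition is equivalent to showing that
\begin{equation*}
\sum_{n\le x}\frac{f(n)\overline{f(n+1)}}{n}=E(f)\log x+o(\log x),
\end{equation*}
noting that the product defining $E(f)$ is manifestly real, so taking real parts is harmless.

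Next, I would apply Corollary~\ref{keytotao} to obtain
\begin{equation*}
T(y):=\sum_{n\le y}f(n)\overline{f(n+1)}=E(f)\,y+\varepsilon(y)\,y,\qquad \varepsilon(y)\to 0\ \text{as}\ y\to\infty.
\end{equation*}
Then, by Abel summation with $a_n=f(n)\overline{f(n+1)}$ and weights $1/n$,
\begin{equation*}
\sum_{n\le x}\frac{a_n}{n}=\frac{T(x)}{x}+\sum_{n\le x-1}\frac{T(n)}{n(n+1)}+O(1).
\end{equation*}
Substituting $T(n)=E(f)n+\varepsilon(n)n$ gives
\begin{equation*}
\sum_{n\le x-1}\frac{T(n)}{n(n+1)}=E(f)\sum_{n\le x-1}\frac{1}{n+1}+\sum_{n\le x-1}\frac{\varepsilon(n)}{n+1}=E(f)\log x+o(\log x),
\end{equation*}
where the last step uses the elementary fact that $\frac{1}{\log x}\sum_{n\le x}\varepsilon(n)/n\to 0$ whenever $\varepsilon(n)\to 0$. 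Combining the two displays yields the claimed asymptotic, completing the proof.

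The work is essentially mechanical once Corollary~\ref{keytotao} is in hand; the only delicate point is the Abel summation step, where one must make sure that the error term $\varepsilon(n)n$ in the asymptotic for $T(n)$ contributes $o(\log x)$ after being weighted by $1/n(n+1)$. This is not a genuine obstacle, but it is worth pointing out: it would fail if one only had a bound of the form $T(n)=E(f)n+O(n)$ rather than an asymptotic with a vanishing error, which is precisely why the $o(1)$ conclusion in Corollary~\ref{keytotao} (as opposed to a mere $O(1)$) is essential.
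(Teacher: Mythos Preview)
Your proof is correct and follows essentially the same route as the paper: expand $|\triangle f(n)|^2=2-2\operatorname{Re}(f(n)\overline{f(n+1)})$, apply Corollary~\ref{keytotao} to get $T(y)=E(f)y+o(y)$, and then pass from the Ces\`aro average to the logarithmic average via partial summation. The only cosmetic difference is that the paper writes the partial summation in integral form $\frac{M(x)}{x}+\int_1^x M(y)/y^2\,dy$ rather than your discrete version, but the content is identical.
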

\begin{proof}
Applying Corollary~\ref{keytotao}
we have that 
\[M(y)=\sum_{n\le y}f(n)\overline{f(n+1)})=y\frac{\mu(q)}{q}\prod_{\substack{p\ge 1\\ p\nmid q}}\left(2\operatorname{Re}\left(1-\frac{1}{p}\right)\left(\sum_{k\ge 0}\frac{f(p^k)\overline{\chi(p^k)}p^{-ikt}}{p^k}\right)-1\right)+o(y).\]
Consequently,
\[\sum_{n\le x}\frac{\operatorname{Re}f(n)\overline{f(n+1)})}{n}=\frac{M(x)}{x}+\int_{1}^x\frac{M(y)}{y^2}dy=\log x \cdot E(f)+o(\log x)\]
and
\[\sum_{n\le x}\frac{|\triangle f(n)|^2}{n}=2\log x -2\sum_{n\le x}\frac{\operatorname{Re}f(n)\overline{f(n+1)})}{n}+O(1)=2(1-E(f)+o(1))\log x.\]
\end{proof}
\begin{corollary}\label{largecor}
Let $f:\mathbb{N}\to \mathbb{T}$ be a multiplicative function such that $\mathbb{D}(f,n^{it}\chi(n);\infty)<\infty$ for some $t\in\mathbb{R}$ and primitive character $\chi$ of conductor $q.$ Suppose that 
 \[\sum_{n\le x}\frac{|\triangle f(n)|^2}{n}=o(\log x).\]
 Then, $f(n)=n^{it}.$
\end{corollary}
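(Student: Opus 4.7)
The plan is to use Proposition~\ref{pretentiouskatai} to translate the hypothesis into a scalar identity on the Euler product defining $E(f)$, and then saturate that identity factor-by-factor. Substituting $\sum_{n\le x}|\triangle f(n)|^2/n=o(\log x)$ into the asymptotic of Proposition~\ref{pretentiouskatai} gives $1-E(f)=o(1)$; since $E(f)$ is a fixed constant, this forces $E(f)=1$. Writing
$$
E(f)=\frac{\mu(q)}{q}\prod_{p\nmid q}A_p,\qquad A_p:=2\operatorname{Re}\!\left[\left(1-\tfrac{1}{p}\right)\sum_{k\ge 0}\frac{F(p^k)}{p^k}\right]-1,
$$
the task reduces to extracting from $E(f)=1$ the conclusions that $q=1$ and $F(p^k)=1$ for every prime $p$ and every $k\ge 0$.

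The key input is the uniform bound $|A_p|\le 1$, which holds for either of two reasons. By Theorem~\ref{charactercor}, $A_p$ equals the local factor $M_p(F,\overline{F};1)$, which is the Ces\`aro average of $F_p(n)\overline{F_p(n+1)}$ with terms in the closed unit disc; alternatively, with $Y_p:=\sum_{k\ge 0}F(p^k)/p^k$ one directly has $|Y_p|\le p/(p-1)$, so $|(1-1/p)Y_p|\le 1$ and $A_p\in[-1,1]$. The product $\prod_p A_p$ converges because $\mathbb{D}(f,\chi n^{it};\infty)<\infty$ (one checks $1-A_p=(1-\operatorname{Re}(F(p)))/p+O(1/p^2)$). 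Combined with $|\mu(q)/q|\le 1/q$, the identity $|E(f)|=1$ then forces $q=1$ (the case $\mu(q)=0$ is ruled out since it would give $E(f)=0\neq 1$), so $\chi$ is trivial and $\prod_p A_p=1$.

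Rigidity finishes the proof. Since $\prod_p|A_p|=|\prod_p A_p|=1$ and each $|A_p|\le 1$, every $|A_p|=1$. For $p\ge 3$ one has $\operatorname{Re}(Y_p)\ge (p-2)/(p-1)>0$, giving $A_p>-1$ and hence $A_p=1$; the identity $\prod_p A_p=1$ then also forces $A_2=1$. Finally, $A_p=1$ saturates the chain $(1-1/p)\operatorname{Re}(Y_p)=1$ together with $|Y_p|\le p/(p-1)$, and termwise comparison in $\operatorname{Re}(Y_p)=\sum_{k\ge 0}\operatorname{Re}(F(p^k))/p^k\le\sum_{k\ge 0}1/p^k=p/(p-1)$ yields $F(p^k)=1$ for every $k\ge 0$. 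Unwinding~\eqref{twist} with $\chi$ trivial gives $f(p^k)=p^{ikt}$, and multiplicativity yields $f(n)=n^{it}$ on all of $\mathbb{N}$.

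The main obstacle is the extremality analysis: showing that the scalar identity $E(f)=1$ forces the pointwise conclusion $F(p^k)=1$, accomplished by the chain of saturated inequalities $|A_p|\le 1\Rightarrow A_p=1\Rightarrow F(p^k)=1$. Once this rigidity is in place, the identification $f(n)=n^{it}$ is a matter of unwinding the definition of $F$.
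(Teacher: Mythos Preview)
Your proof is correct and follows essentially the same route as the paper: apply Proposition~\ref{pretentiouskatai} to obtain $E(f)=1$, then saturate the Euler product factor by factor to force $q=1$ and $F(p^k)=1$ everywhere; your version is in fact more careful than the paper's terse argument in explicitly ruling out negative Euler factors. One minor slip: from $|(1-1/p)Y_p|\le 1$ alone you only get $A_p=2\operatorname{Re}(\cdot)-1\in[-3,1]$, not $[-1,1]$; but your primary argument (identifying $A_p$ with the local correlation $M_p(F,\overline{F};1)$, an average of terms in the closed unit disc) does give $|A_p|\le 1$, and your later lower bound $\operatorname{Re}(Y_p)\ge(p-2)/(p-1)$ independently handles the negativity issue, so the proof stands.
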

\begin{proof}
Proposition~\ref{pretentiouskatai}  implies that  $1=E(f).$ Since
for all $p\ge 1$ each Euler factor
\[2\left(1-\frac{1}{p}\right)\sum_{k\ge 0}\frac{\operatorname{Re}f(p^k)\overline{\chi(p^k)}p^{-ikt}}{p^k}-1\le 1.\]
we must therefore have $q=1$ and 
\[2\left(1-\frac{1}{p}\right)\sum_{k\ge 0}\frac{\operatorname{Re}f(p^k)p^{-ikt}}{p^k}-1=1.\]
This is possible if only if $f(p^k)=p^{kit}$ for all $p\ge 1$ and $k\ge 1.$ The result follows.
\end{proof}
Theorem~\ref{introkatai} now follows from the following
\begin{prop}
Let $f:\mathbb{N}\to\mathbb{T}$ be a multiplicative function such that \[
\sum_{n\le x}\frac{|\triangle f(n)|^2}{n}=o(\log x).
\]
Then $f(n)=n^{it}$ for some $t\in\mathbb{R}.$
\end{prop}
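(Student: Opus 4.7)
The strategy is to combine the two results established just before the proposition: Lemma~\ref{kataireduction} (which converts a bound on the difference sum into a pretentious structure) with Corollary~\ref{largecor} (which upgrades pretentiousness plus the difference hypothesis to the rigid conclusion $f(n)=n^{it}$). Since the hypothesis gives $o(\log x)$, which is stronger than $2(1-\varepsilon)\log x$ for any fixed $\varepsilon<1$, there is essentially nothing new to prove.

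First I would fix some $\varepsilon\in(0,1)$, say $\varepsilon=1/2$. The assumption
\[
\sum_{n\le x}\frac{|\triangle f(n)|^2}{n}=o(\log x)
\]
implies that for all $x$ sufficiently large one has $\sum_{n\le x}|\triangle f(n)|^2/n \le 2(1-\varepsilon)\log x$. Lemma~\ref{kataireduction} therefore applies and produces a primitive character $\chi$ of some conductor $q$ together with a real number $t$ such that $\mathbb{D}(f(n),\chi(n)n^{it};\infty)<\infty$. In particular $f$ is a ``pretentious" unimodular multiplicative function in the sense needed to invoke the correlation formula of Theorem~\ref{charactercor}.

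Next I would apply Corollary~\ref{largecor} to $f$ with the data $(\chi,t)$ just produced. The difference hypothesis of that corollary is exactly our hypothesis, so the corollary yields $f(n)=n^{it}$, completing the proof. Note that the corollary actually forces the conductor of $\chi$ to equal $1$ and forces each local Euler factor to saturate at $+1$, i.e.\ $f(p^k)=p^{ikt}$ at every prime power; multiplicativity then pins down $f$ completely.

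There is no real obstacle left: all the analytic content has already been absorbed into Theorem~\ref{charactercor} (via Corollary~\ref{keytotao}), the derivation of the clean identity $\sum_{n\le x}|\triangle f(n)|^2/n=2(1-E(f)+o(1))\log x$ in Proposition~\ref{pretentiouskatai}, and the saturation analysis in Corollary~\ref{largecor}. The only minor point worth verifying is that the conclusion really is $f(n)=n^{it}$ with the same $t$ produced by the pretentiousness step: this is automatic since the equality $E(f)=1$ forces every local factor to equal $1$, which in turn forces $\chi$ to be trivial and $f(p^k)\overline{\chi(p^k)}p^{-ikt}=1$ for every prime power.
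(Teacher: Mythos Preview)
Your proposal is correct and follows exactly the same two-step route as the paper's own proof: invoke Lemma~\ref{kataireduction} to obtain $\mathbb{D}(f(n),\chi(n)n^{it};\infty)<\infty$, and then apply Corollary~\ref{largecor} to conclude $f(n)=n^{it}$. The additional commentary you give (fixing $\varepsilon=1/2$, noting that the saturation $E(f)=1$ forces $q=1$ and $f(p^k)=p^{ikt}$) is already the content of Corollary~\ref{largecor} and its proof, so nothing further is needed.
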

\begin{proof}

Applying Lemma~\ref{kataireduction} we can find a primitive character $\chi$ and $t\in\mathbb{R}$ such that $\mathbb{D}(f(n),\chi(n)n^{it};\infty)<\infty.$ We now apply Corollary~\ref{largecor} to conclude that $f(n)=n^{it}.$
\end{proof}

\end{section}
\begin{section}{Applications to the binary additive problems}
As was mentioned in the introduction Br{\"u}dern established the following result.

\begin{theoremn}[\ref{brudern}]{\bf{[Br{\"u}dern, 2008]}}
Suppose $A$ and $B$ are multiplicative sequences of positive density $\rho_A$ and $\rho_B$ respectively. For $k\ge 1,$ let 
\[a(p^k)=\rho_A(p^k)/p^k-\rho_A(p^{k-1})/p^{k-1}\]
Define $b(h)$ in the same fashion. Then, $r(n)=\rho_A\rho_B\sigma(n)n+o(n)$
where
\[\sigma(n)=\prod_{p^m\vert| n}\left(1+\sum_{k=1}^m\frac{p^{k-1}a(p^k)b(p^k)}{p-1}-\frac{p^{m}a(p^{m+1})b(p^{m+1})}{(p-1)^2}\right).\]
\end{theoremn}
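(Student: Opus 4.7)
The plan is to recognize $r(n)$ as a two-point correlation of multiplicative functions evaluated at linear polynomials, apply Theorem~\ref{intromain} to extract an Euler product, and then verify prime-by-prime that this product agrees with $\rho_A \rho_B \sigma(n)$.

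First I would write $r(n) = \sum_{m=1}^{n-1} 1_A(m)\, 1_B(n-m)$, which has the form $\sum_{m \le x} f(P(m)) g(Q(m))$ with $f = 1_A$, $g = 1_B$, $P(m) = m$, $Q(m) = n-m$, and $x = n$. Since $\rho_A, \rho_B > 0$, Wirsing's theorem forces $\mathbb{D}(1, 1_A;\infty), \mathbb{D}(1, 1_B;\infty) < \infty$. The resultant $\mathrm{res}(P, Q) = n \ne 0$, and a short check shows that the large-prime-power contributions $N_P(n), N_Q(n)$ are negligible (for $m\in[1,n-1]$ a prime $p \ge n$ can divide $m$ or $n-m$ only for one value of $m$, giving $O(1/n)$ per prime). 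Theorem~\ref{intromain} therefore gives
$$r(n) = n \prod_{p \le n} M_p(1_A(P), 1_B(Q)) + o(n).$$

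Next I would compute each local factor using Lemma~\ref{localcorrelations1}. Let $m_p = v_p(n)$. A direct analysis of the joint valuations of $r$ and $n-r$ for random $r$ modulo a high power of $p$ yields the nonzero joint densities $\omega(p^k,p^\ell)$: for $0 \le k < m_p$ one has $\omega(p^k,p^k) = (p-1)/p^{k+1}$; the diagonal at $m_p$ gives $\omega(p^{m_p},p^{m_p}) = (p-2)/p^{m_p+1}$; and the off-diagonal contributions are $\omega(p^{m_p},p^{m_p+j}) = \omega(p^{m_p+j},p^{m_p}) = (p-1)/p^{m_p+j+1}$ for $j \ge 1$, with all other $\omega(p^k,p^\ell)$ vanishing. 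Summing against $1_A(p^k) 1_B(p^\ell)$ yields a closed expression for $M_p$ in terms of the values $1_A(p^i), 1_B(p^j)$ with $\min(i,j) \le m_p$.

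The final step is to translate each $M_p$ into the corresponding Euler factor of $\rho_A \rho_B \sigma(n)$. Writing $1_A = 1*\alpha$, $1_B = 1*\beta$, using the identity $\rho_A(p^k)/p^k = (1-1/p)^{-1}\sum_{j\ge k}\alpha(p^j)/p^j$ (and its analogue for $B$), and exploiting the definitions $a(p^k) = \rho_A(p^k)/p^k - \rho_A(p^{k-1})/p^{k-1}$ and $b(p^k)$ similarly, one rearranges each $M_p$ into the form $\rho_{A,p}\rho_{B,p}\,\sigma_p(n)$, where $\rho_{A,p} = (1-1/p)\sum_k 1_A(p^k)/p^k$ and $\sigma_p(n)$ is the $p$-local factor in Br{\"u}dern's singular series. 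Taking the product over primes and using $\rho_A = \prod_p \rho_{A,p}$ then yields the claim. The main obstacle is precisely this last algebraic step at primes $p \mid n$: the interplay between the diagonal term $\omega(p^{m_p},p^{m_p})$ and the off-diagonal tail $\sum_{j\ge1}\omega(p^{m_p},p^{m_p+j})$ is what produces the delicate correction $-p^{m_p}a(p^{m_p+1})b(p^{m_p+1})/(p-1)^2$ appearing in $\sigma(n)$, and verifying this requires a careful telescoping identity between the values of $a$ and $b$ at consecutive prime powers.
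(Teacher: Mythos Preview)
Your proposal is correct and follows essentially the same route as the paper: write $r(n)$ as a two-point correlation of the multiplicative functions $1_A$ and $1_B$ along the linear forms $m$ and $n-m$, invoke the main correlation result (the paper uses Corollary~\ref{introlinear}, you use the equivalent Theorem~\ref{intromain} plus Lemma~\ref{localcorrelations1}), and then match Euler factors. The paper dispatches the final algebraic identification in one sentence (``a straightforward manipulation with Euler factors''), whereas you spell out the joint densities $\omega(p^k,p^\ell)$ and the telescoping that produces the $-p^{m_p}a(p^{m_p+1})b(p^{m_p+1})/(p-1)^2$ term, which is a welcome level of detail.

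One small caveat worth making explicit: in both approaches the polynomial $Q(m)=n-m$ depends on $n$, so Theorem~\ref{intromain} is being applied with a moving resultant $\operatorname{res}(P,Q)=n$. You checked that the $N_P,N_Q$ contributions to $\mathbb D_P,\mathbb D_Q$ are harmless, but you should also note (as the paper does by pointing to~\eqref{error}) that the implied constant in the error term of Theorem~\ref{intromain} is uniform in the resultant for linear $P,Q$, since $\omega_P(d),\omega_Q(d)\le 1$ throughout. With that remark added, your argument is complete.
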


We now sketch how one can derive this from our main result. 
\begin{proof}
Let $f(n)=\text{I}_A(n)$ and $g(n)=\text{I}_B(n).$ Clearly both, $f$ and $g$ are multiplicative  taking values $\{0,1\}.$ Since $\rho_A>0,$ we have 
\[\limsup_{x} \frac{1}{x}\sum_{n\le x}f(n)>0.\]
Theorem of Delange readily implies that $\mathbb{D}(1,f;\infty)<\infty.$ By analogy, $\mathbb{D}(1,g;\infty)<\infty.$ Furthermore,
\[\rho_A=\lim_{x\to\infty} \frac{1}{x}\sum_{n\le x}f(n)=\textfrak{P}(f,1,\infty)\]
and
\[\rho_B=\lim_{x\to\infty} \frac{1}{x}\sum_{n\le x}g(n)=\textfrak{P}(g,1,\infty).\]
 Notice that
\[r(n)=\sum_{m\le n}f(m)g(n-m).\]
We note that combining the the proof of Corollary~\ref{introlinear} we may let $a=1,$ $c=0,$ $b=n,$ $d=-1$ in Corollary~\ref{introlinear}. Despite the fact that $d=n\to\infty$ the error term is still bounded by~\eqref{error}. Corollary~\ref{introlinear} gives
\[r(n)=\sum_{d\vert n}\frac{G(f;g;d;\infty)}{d}n+o(n).\]
A straightforward manipulation with Euler factors show that the latter has the Euler product described above.
\end{proof}
\begin{remark}
In case one of the sets $A,B$ has density zero, say $\rho_A=0$ we can apply Delange's theorem to conclude
\[r(n)=\sum_{m\le n }f(m)g(n-m)\le \sum_{m\le n }f(m)=o(n).\]
\end{remark}
\end{section}
\nocite{MR1618321}
\bibliographystyle{alpha}
\bibliography{Markovbib}
\end{document}